\definecolor{ashgrey}{rgb}{0.7, 0.75, 0.71}
\newtheorem{theorem}{Theorem}
\newtheorem{lemma}[theorem]{Lemma}
\newtheorem{prop}[theorem]{Proposition}
\newtheorem{cor}[theorem]{Corollary}
\theoremstyle{remark}
\newtheorem{rem}[theorem]{Remark}
\numberwithin{equation}{section}
\numberwithin{theorem}{section}
\def \bk {{\mathbf k}}
\def \RR {\mathbb{R}}
\def \NN {\mathbb{N}}
\def\({\left(}
\def\){\right)}
\def\[{\left[}
\def\]{\right]}
\def \ve {\varepsilon}
\newcommand{\fp}[2]{#1^{\left\langle  #2 \right\rangle}}
\begin{document}
\author{T. Jakubowski\footnote{Wroc{\l}aw University of Science and Technology, ul. Wybrze\.ze Wyspia\'nskiego 27, Wroc{\l}aw, Poland, email: tomasz.jakubowski@pwr.edu.pl},   G. Serafin\footnote{Wroc{\l}aw University of Science and Technology, ul. Wybrze\.ze Wyspia\'nskiego 27, Wroc{\l}aw, Poland, email: grzegorz.serafin@pwr.edu.pl}\\
}
\date{}
\title{Fractional Burgers equation\\ with singular initial condition 
\footnotetext{2020 {\it Mathematics Subject Classification}: 35A01, 35B40; 35K55; 35S10.} 
\footnotetext{{\it Key words and phrases}:  Fractional Burgers equation, Singular initial condition, Existence of solutions, Estimates of solutions.}
\footnotetext{The first author was partially supported by Wroc\l{}aw University of Science and Technology grant 8211104160 MPK: 9130730000 and by the grant 2015/18/E/ST1/00239 of National Science Centre, Poland. The second author was partially supported by the grant 2015/18/E/ST1/00239 of National Science Centre, Poland.}}	
\maketitle

\begin{abstract}
We consider the fractional Burgers equation $ \Delta^{\alpha/2} u + b\cdot \nabla (u|u|^{(\alpha-1)/\beta})$ on $\RR^d$, $d\geq2$, with {$\alpha \in (1,2)$ and} $\beta>1$ and prove the existence of a solution for a large class of initial conditions, which contains functions that do not belong to any $L^p(\RR^d)$, $1\leq p\leq\infty$. Next, we apply the general results to the initial condition $u_0(x)=M|x|^{-\beta}$, $1<\beta<d$, and show the existence of a selfsimilar solution and derive its  properties such as smoothness, two-sided estimates, asymptotics and gradient estimates.
\end{abstract}

\section{Introduction}
Let $d \ge 2$ and $\alpha \in (1,2)$. Consider the fractional Burgers equation 
\begin{equation}\label{eq:problem}
\begin{cases}
u_t = \Delta^{\alpha/2} u + b\cdot \nabla (u|u|^{q}), &\qquad t>0,\\
u(0,\cdot)=u_0
\end{cases}
\end{equation}
in $\RR^d$, where  $ \beta>1$,  $b\in\RR^d$ and $q=\frac{\alpha-1}{\beta}$ are fixed. For technical reasons, we  assume (without loss of  generality) $b=(|b|,0,0,...,0)$. Here, $\Delta^{\alpha/2}$ is the fractional Laplacian 
\begin{align*}
\Delta^{\alpha/2} f(x) = \lim_{\ve\to 0^+} c_{d,\alpha} \int_{|y|>\ve} \frac{f(x+y)-f(x)}{|y|^{d+\alpha}} dy,\hspace{25pt} f\in C_c^{\infty}(\RR^d),
\end{align*}
where $c_{d,\alpha}$ is some constant. It may be also defined by the Fourier transform
\begin{align*}
\widehat{\Delta^{\alpha/2} f}(\xi) = -|\xi|^\alpha \hat{f}(\xi).
\end{align*}
We put
\begin{align*}
p(t,x) = \frac{1}{(2\pi)^d} \int_{\RR^d} e^{ix \cdot\xi} e^{-t|\xi|^\alpha}, \qquad t>0,\; x\in \RR^d.
\end{align*}
and denote $p(t,x,y) = p(t,y-x)$, $t>0,\ x,y\in\RR^d$. The function $p$ is the density of the semigroup 
$P_t = e^{t\Delta^{\alpha/2}}$
generated by the fractional Laplacian,
$$P_tf(x)=e^{t\Delta^{\alpha/2}}f(x)=\int_{\RR^d}p(t,x,y)f(y)dy.$$

By a solution to the Cauchy problem \eqref{eq:problem} we mean a mild solution, i.e. a function $u \colon \RR \times \RR^d \to \RR$ satisfying the Duhamel formula 
\begin{align}\label{eq:duhamel}
u(t,x) = P_tu_0(x)  + \int_0^t \int_{\RR^d} b \cdot \nabla_x p(t-s,x,z)  u(s,z)|u(s,z)|^{q}\,dz\,ds.
\end{align}


The fractional Burgers equation was intensely studied in recent years, see e.g. \cites{MR2659155, MR3789847, MR1637513, MR1849690, MR1881259, MR2407204, KMX, AIK, KNS, WW, MR3382709}. It is a generalization of the classical Burgers equation 
\begin{align*}
u_t = \nu u_{xx} - \tfrac{1}{2}(u^2)_x,
\end{align*}
which was introduced as a simplest model describing the turbulence phenomena (see e.g. \cite{Burgers}). The fractional counterpart \eqref{eq:problem} of this equation was studied for the first time by  Biler and Funaki in \cite{MR1637513}. Later on,  in \cite{MR1881259} Biler, Karch and Woyczyński considered some further generalizations, where the fractional Laplacian was replaced by some L\'evy operator and nonlinearity $f(u)$ was given by a smooth function $f$. {In particular, they proved that for $u_0\in L^1(\RR^d)\cap L^{\infty}(\RR^d)$ there exists a unique solution to \eqref{eq:problem} such that (see \cite[Theorem 3.1]{MR1881259})
\begin{align}\label{eq:BKWuin}
u\in C\big((0,\infty);W^{2,2}(\RR^d)\big)\cap C^1\big((0,\infty);L^2(\RR^d)\big),
\end{align}
}
and
\begin{align}\label{eq:BKWnorms}
\|u(t,\cdot)\|_\gamma\leq \|u_0\|_\gamma,\ \ \ \ \ t>0, \ \ \gamma\in[1,\infty].
\end{align}
We point out that in both papers the standing assumptions was $u_0 \in L^1(\RR^d)$. In \cite{MR1849690} the authors studied the equation \eqref{eq:problem} with the critical exponent $q=\frac{(\alpha-1)}d$, i.e.
\begin{align}\label{eq:critical}
	\begin{cases}
	u_t = \Delta^{\alpha/2} u + b\cdot \nabla u|u|^{(\alpha-1)/d}, &\qquad t>0,\\
	u(0,\cdot)=u_0.
\end{cases}
\end{align}
They showed that for $u_0= M\delta_0$, $M>0$, there is a unique selfsimilar solution (called source solution) to \eqref{eq:critical} satisfying the  scaling property $U(t,x) = t^{-d/\alpha}U(1,t^{-1/\alpha}x)$ (the same  as the density $p$). It turns out that this self-similar solution determines the long time behavior of solutions to a large class of Cauchy problems \eqref{eq:critical} with $u_0 \in L^1(\RR^d)$ and $\|u_0\|_1=M$ (see \cite[Theorem 2.2]{MR1849690}) 
\begin{align}\label{eq:assymp}
\lim_{t\to \infty}t^{\frac{d}{\alpha}(1-\frac{1}{\gamma})} \|u(t,\cdot) - U(t,\cdot))\|_\gamma = 0, \qquad \gamma \in [1,\infty].
\end{align}
If $u_0 \in L^1(\RR^d) \cap L^{\infty}(\RR^d)$, the existence of the solution to the problem \eqref{eq:critical} can by proved by using the Banach fixed point theorem (see \cite{MR1881259}). The case $u_0 = M\delta_0$ is in some sense similar to $u_0\in L^1(\RR^d)$  since such $u_0$ is still integrable and the solution may be constructed by using the approximation by the solutions with initial conditions from $L^1(\RR^d)$. Nevertheless, if $u_0$ does not belong to any $L^\gamma(\RR^d)$, $\gamma\in[1,\infty]$,  the construction by fixed point theorems fails.

In this paper we assume that the initial condition $u_0 \colon \RR^d \to \RR$ satisfies
\begin{align}\label{ass:A}
\sup_{ t>0,\, x\in\RR^d} t^{(\beta-1)/\alpha} \int_{\RR^{d}}|u_0(y)| p^{(d-1)}(t,\tilde x,\tilde y) d y\leq\mu, \tag{\textbf{A}}
\end{align}
where $\mu \in (0,\infty)$ is some constant. Here, $p^{(d-1)}$ is the density of $P_t$ in dimension $d-1$ and $\tilde{x} = {(x_2,\ldots, x_d)} \in \RR^{d-1}$ (see \eqref{def:pm} and Notation in Section 2). 
The main feature of the class of functions satisfying \eqref{ass:A} is   that it contains some functions not belonging to any $L^\gamma(\RR^d)$, $\gamma\in[1,\infty]$. This allows us to drop the common assumption $u_0\in L_1(\RR^d)$, or even $u_0\in L_1(\RR^d)\cap L^\infty(\RR^d)$, and consider some singular initial conditions.
Our first result is 
\begin{theorem}\label{thm:existence}
Let {$\alpha \in (1,2)$, $\beta>1$ and $q = (\alpha-1)/\beta$.} If $u_0$ satisfies \eqref{ass:A}, then there exists a solution $u(t,x)$ to the problem \eqref{eq:problem}. Furthermore, there is a constant $C>0$ depending {only} on $d,\alpha,\beta, b, \mu$ such that 
{
\begin{align}\label{eq:existence}
	 |u(t,x)| \le C\,t^{-1/\alpha}  \int_{\RR^d}     |u_0( y)| p^{(d-1)}(t, \tilde x, \tilde y) d y \le C\mu  t^{-\beta/\alpha}, \qquad t>0, \; x \in \RR^d.
\end{align}}
\end{theorem}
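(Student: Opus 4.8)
The plan is to construct the solution by a standard Picard-type iteration in a carefully chosen Banach space and to propagate the bound \eqref{eq:existence} through the iteration. First I would introduce the space $X$ of measurable functions $u$ on $(0,\infty)\times\RR^d$ for which the norm
\[
\|u\|_X := \sup_{t>0,\,x\in\RR^d}\ \frac{t^{1/\alpha}\,|u(t,x)|}{\int_{\RR^d}|u_0(y)|\,p^{(d-1)}(t,\tilde x,\tilde y)\,dy}
\]
is finite (equivalently, control $t^{\beta/\alpha}|u(t,x)|$ using \eqref{ass:A}). Set $u_1(t,x)=P_tu_0(x)$; by \eqref{ass:A} and the factorization $p(t,x,y)=p^{(1)}(t,x_1,y_1)\,p^{(d-1)}(t,\tilde x,\tilde y)$, together with $\int p^{(1)}(t,x_1,y_1)\,dy_1=1$, one gets $|u_1(t,x)|\le\mu\,t^{-\beta/\alpha}$ and more precisely the first inequality in \eqref{eq:existence} with $C=1$. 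Then define iteratively $u_{n+1}(t,x)=P_tu_0(x)+\Phi(u_n)(t,x)$, where $\Phi(u)$ is the Duhamel integral term in \eqref{eq:duhamel}.

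The heart of the matter is the estimate for the nonlinear operator $\Phi$. Assuming $|u(s,z)|\le K\,s^{-1/\alpha}\int|u_0(y)|\,p^{(d-1)}(s,\tilde z,\tilde y)\,dy$, I would bound $|u(s,z)|^{1+q}$ by interpolating: write $|u(s,z)|^{1+q}\le \big(K\,s^{-1/\alpha}\int|u_0|\,p^{(d-1)}(s,\tilde z,\cdot)\big)\cdot\big(C\mu\,s^{-\beta/\alpha}\big)^{q}$, using the second inequality in \eqref{eq:existence} (hence the appearance of $\mu$ in the constant). Since $q=(\alpha-1)/\beta$, the exponent $s^{-1/\alpha-\beta q/\alpha}=s^{-1/\alpha-(\alpha-1)/\alpha}=s^{-1}$, which is exactly the power that, after the $s$-integration against the gradient kernel, reproduces the right time scaling. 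Next I use $|\nabla_x p(t-s,x,z)|\lesssim (t-s)^{-1/\alpha}p(c(t-s),x,z)$ (standard fractional heat kernel gradient bound, with an adjusted constant in the space variable), factor the $(d-1)$-dimensional part and apply the Chapman–Kolmogorov identity $\int_{\RR^{d-1}}p^{(d-1)}(c(t-s),\tilde x,\tilde z)\,p^{(d-1)}(s,\tilde z,\tilde y)\,d\tilde z = p^{(d-1)}(c(t-s)+s,\tilde x,\tilde y)$; for the one-dimensional variable I integrate out $z_1$ (the $s$-dependence through $|u_0|$ only involves $\tilde z$, so the $z_1$-integral of the gradient kernel is harmless after absorbing $|b|$). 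What remains is a scalar integral $\int_0^t (t-s)^{-1/\alpha}s^{-1}\,\big(\text{something}(cs+\ldots)\big)\,ds$ times $p^{(d-1)}$ evaluated at a time comparable to $t$; using that $p^{(d-1)}(r,\tilde x,\tilde y)$ is, up to constants, monotone-comparable for $r\asymp t$, this yields $|\Phi(u)(t,x)|\le C_0\,\|b\|\,\mu^{q}\,K\,t^{-1/\alpha}\int|u_0(y)|\,p^{(d-1)}(t,\tilde x,\tilde y)\,dy$.

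From here the argument is routine. Choosing the working ball $B_R=\{\|u\|_X\le R\}$ with $R=2$ and noting $C_0\|b\|\mu^q R$ can be made $<1/2$ — this is \emph{not} automatic, so one either restricts to small $\|b\|\mu^q$ or, better, exploits that the nonlinear estimate actually produces a factor that is small when the iteration is set up with the sharp constant $C=1$ on the linear part and the contraction is run on a short time interval and then bootstrapped by the scaling $u\mapsto \lambda u(\lambda^\alpha t,\lambda x)$ — I would mirror whatever normalization the paper uses to close this. A parallel computation gives the Lipschitz bound $\|\Phi(u)-\Phi(v)\|_X\le \tfrac12\|u-v\|_X$ on $B_R$ (using $\big|\,a|a|^q-b|b|^q\,\big|\le (1+q)\max(|a|,|b|)^q|a-b|$), so $u_n\to u$ in $X$ and the limit solves \eqref{eq:duhamel}; passing to the limit under the integral is justified by dominated convergence with the $X$-bound as dominating function. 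The bound \eqref{eq:existence} is inherited from $B_R$ and the linear estimate above, with the final $\le C\mu\,t^{-\beta/\alpha}$ being a direct consequence of \eqref{ass:A}. The main obstacle, as indicated, is the nonlinear kernel estimate — specifically getting the exponents of $(t-s)$ and $s$ to combine into an integrable, scale-correct expression and handling the anisotropy (the $(d-1)$-dimensional kernel versus the full-dimensional gradient) cleanly via Chapman–Kolmogorov; everything else is bookkeeping.
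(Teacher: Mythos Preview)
Your Picard-iteration plan has a genuine gap. A minor point first: the factorization $p(t,x,y)=p^{(1)}(t,x_1,y_1)\,p^{(d-1)}(t,\tilde x,\tilde y)$ is false for $\alpha<2$ (the isotropic stable density is not a tensor product); only the marginal identity \eqref{eq:intp} and the inequality \eqref{eq:p<p(d-1)} hold. The fatal issue is the time integral in your nonlinear estimate. Carrying it out exactly as you describe---bounding $|u(s,z)|^{1+q}\le K^{1+q}\mu^q\, s^{-1}\,G(s,\tilde z)$ with $G(s,\tilde z)=\int|u_0(y)|\,p^{(d-1)}(s,\tilde z,\tilde y)\,dy$, then integrating out $z_1$ via \eqref{eq:intp} and applying Chapman--Kolmogorov in $\RR^{d-1}$---yields
\[
|\Phi(u)(t,x)|\ \lesssim\ |b|\,K^{1+q}\mu^q\,G(t,\tilde x)\int_0^t (t-s)^{-1/\alpha}s^{-1}\,ds,
\]
and this integral \emph{diverges} at $s=0$. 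You flag a possible smallness problem and propose short-time-plus-scaling as a fix, but the obstruction is not a non-small constant: it is an infinite one, and no choice of $|b|\mu^q$, ball radius, or time window removes an $s^{-1}$ singularity at the origin. The difficulty is intrinsic to a fixed-point scheme in your space $X$: the only control on an iterate near $s=0$ is the pointwise bound $|u(s,\cdot)|\lesssim s^{-\beta/\alpha}$, and that is precisely one power of $s$ too singular.

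The paper takes a different route. It truncates $u_0$ to $u_0^{(k,n)}\in L^1\cap L^\infty$, for which solutions $u^{(k,n)}$ already exist (by \cite{MR1881259}) and are monotone in the initial data. The new ingredient, available only for \emph{genuine} solutions, is a conservation law exploiting $b=(|b|,0,\dots,0)$: integrating the Duhamel identity against any bounded $F(\tilde x)$ kills the nonlinear term, so $\int F(\tilde w)\,u(t,w)\,dw=\int F(\tilde w)\,P_tu_0(w)\,dw$ (Corollary~\ref{lem:mcu0}). With $F=p^{(d-1)}(t_0,\tilde x,\cdot)$ this gives Lemma~\ref{lem:aux1}, which bounds $\int p(1-r^\alpha,x,rw)\,|u^*(r^\alpha t,w)|\,dw$ by $r^{\beta-d}(1-r)^{-1/\alpha}$ times $G(t,t^{1/\alpha}\tilde x)$---replacing the singular behaviour at $r\to 0$ by something bounded there. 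This is what makes the small-$r$ region integrable in Theorem~\ref{thm:tildeunorm} and Proposition~\ref{thm:genest}, producing \eqref{eq:genest} uniformly in $k,n$. Monotonicity then delivers the limit $u=\lim_n\lim_k u^{(k,n)}$, and dominated convergence (justified by the uniform bound) passes the Duhamel formula to the limit. The conservation law is the missing idea in your plan; Picard iterates do not satisfy it, so the small-time divergence cannot be tamed within the iteration.
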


  In the proof of Theorem \ref{thm:existence}  we proceed similarly as in the case of source solutions and approximate the initial condition by the monotone sequence of functions from $L^1(\RR^d) \cap L^\infty(\RR^d)$. However, the convergence of the corresponding sequence of solutions is a major problem here. {We resolve it by showing that a certain functional acting on these solutions is uniformly bounded, which ensures the point-wise convergence of the sequence almost everywhere} (see Lemma \ref{lem:aux1}). This approach, however, requires the technical  assumptions $\beta>1$ and $d\geq2$.
  
The first bound in Theorem \ref{thm:existence} has one weakness - it does not depend on $x_1$, and therefore it is not optimal. Nevertheless, in many cases this inconvenience could be removed. We present it on an example, which  turns out to be quite special. Namely, in the second part of the paper we focus on  the solution {to the problem \eqref{eq:problem} with $u_0(x) = M|x|^{-\beta}$, $M>0$, $1<\beta<d$}, constructed in Theorem \ref{thm:existence}. Its first interesting property is selfsimilarity, in which one can see an analogy to the source solution in the case $\beta=d$ and $u_0=M\delta_0$. Next, we turn our attention to the two-sided pointwise estimates, which not only improve \eqref{eq:existence} in this case, but complement it with the lower bound of the same form. Such estimates were obtained for the source solution of \eqref{eq:critical}. In \cite{MR2407204} Karch and Brandolese showed that for $q=\frac{\alpha-1}{d}$ and  $u_0 = M\delta_0$ with $M>0$ sufficiently small, the solution {$u$}  to \eqref{eq:critical} admits the estimates $0 \le {u(t,x)} \le c p(t,x)$. In \cite{JS1} we generalized this result to any $M>0$ and obtained two-sided estimates $c^{-1} p(t,x) \le {u(t,x)} \le c p(t,x)$.
Furthermore, in \cite[Theorem 1.1]{JS2} we showed that for nonnegative $u_0\in L^1(\RR^d)\cap L^{\infty}(\RR^d)$ there is a constant $C>0$ such that the solution to \eqref{eq:problem} with $q\ge \frac{\alpha-1}{d}$ satisfies 
\begin{align}\label{eq:estJDE}
C^{-1}\,P_tu_0(x)\leq u(t,x)\leq CP_tu_0(x),\ \ \ \ \  \ t>0, \ x\in\RR^d.
\end{align}
Note that the pointwise estimates of  solutions are rather rare in the literature, as deriving them is more challenging than e.g. the $L^p$ bounds. Nevertheless,  they  give a better insight into the  behaviour of the solution. For instance the estimates \eqref{eq:estJDE} permitted the authors to improve the asymptotics given in \eqref{eq:assymp}.

In Theorem \ref{thm:sss} below we prove the existence  of the selfsimilar solution to the problem \eqref{eq:problem} for $u_0(x) = M|x|^{-\beta}$ with $M>0$, $1<\beta<d$ and provide its properties. 
Note that the assumption $\beta<d$ is required in order to get local integrability of $u_0$.

\begin{theorem}\label{thm:sss}
Let {$\alpha \in (1,2)$, $1< \beta < d$, $q = (\alpha-1)/\beta$, $M>0$} and $u_0(x)=M|x|^{-\beta}$. Then, there exists a function $U(x)$ such that 
\begin{enumerate}
\item $u(t,x) = t^{-\beta/\alpha}U(t^{-1/\alpha} x)$ is the solution to the problem \eqref{eq:problem}.
\item There exists a constant $C_1>1$  such that
\begin{align}\label{eq:sss2}
C_1^{-1}\frac{1}{(1+|x|)^\beta} \le U(x) \le  C_1\frac{1}{(1+|x|)^\beta}, \qquad  x\in\RR^d.
\end{align}
\item There is a constant $C_2$  such that for all $x\in\RR^d$ we have
\begin{align}
&\left|U(x)-P_1u_0(x)\right| \le C_2\frac{1}{(1+|x|)^{\beta+(\alpha-1)}}, \label{eq:sss3a} \\
&\left|U(x)-P_1u_0(x) - \alpha \int_0^{1} \int_{\RR^d} r^{d-\beta} \nabla_x p(1-r^\alpha,x,r w) [P_1u_0(w)]^{1+q} dw dr\right| \label{eq:sss3b}\\
&\qquad\qquad\qquad\qquad\qquad\qquad\qquad\qquad\qquad\qquad\qquad\qquad\le \frac{C_2}{(1+|x|)^{\beta+2(\alpha-1)}}. \notag
\end{align}
\item $U \in \mathcal C^{1}\(\RR^d\)$ and there is a constant $C_3$ such that 
\begin{align}\label{eq:sss4}
	|\nabla U(x)|\le C_3 \frac{1}{(1+|x|)^\beta}, \qquad  x\in\RR^d.
\end{align}
\end{enumerate}
All the constants $C_1,C_2, C_3$ depend only on $d,\alpha, \beta, M, b$ and might be calculated explicitly.
\end{theorem}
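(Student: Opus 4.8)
The plan is to derive everything from Theorem~\ref{thm:existence} applied to $u_0(x)=M|x|^{-\beta}$, which first requires checking that this $u_0$ satisfies \eqref{ass:A}. This is a scaling computation: since $p^{(d-1)}(t,\tilde x,\tilde y)=t^{-(d-1)/\alpha}p^{(d-1)}(1,t^{-1/\alpha}\tilde x,t^{-1/\alpha}\tilde y)$ and $|y|^{-\beta}$ is homogeneous of degree $-\beta$, one splits $\int_{\RR^d}|y|^{-\beta}p^{(d-1)}(t,\tilde x,\tilde y)\,dy$ as an integral over $y_1\in\RR$ of $\int_{\RR^{d-1}}(|y_1|^2+|\tilde y|^2)^{-\beta/2}p^{(d-1)}(t,\tilde x,\tilde y)\,d\tilde y$; bounding the kernel by its value-at-center/tail behaviour and using $\beta<d$ (needed for the integral near $\tilde y=0$ when $y_1$ is small) and $\beta>1$ (needed for the $y_1$-integral near infinity) gives the bound $\mu\, t^{-(\beta-1)/\alpha}$ with $\mu\asymp M$. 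So Theorem~\ref{thm:existence} yields a solution $u$ with $|u(t,x)|\le C\mu\, t^{-\beta/\alpha}$. For part (1), selfsimilarity, the cleanest route is to show directly that $v(t,x):=t^{-\beta/\alpha}u(1,t^{-1/\alpha}x)$ (equivalently, $\lambda^{\beta}u(\lambda^\alpha t,\lambda x)$ for any $\lambda>0$) again solves the Duhamel equation \eqref{eq:duhamel} with the same initial data — this works because $u_0$ is homogeneous of degree $-\beta$, $p$ has the $\alpha$-scaling, and the exponent $q=(\alpha-1)/\beta$ is precisely the one making the nonlinear term scale consistently (one checks the powers: the Duhamel integrand scales like $\lambda^{-1}\cdot\lambda^{-d}\cdot\lambda^{\alpha}\cdot\lambda^{-\beta(1+q)}$ times measure $\lambda^{d}\cdot\lambda^{\alpha}$, and $\alpha-1-\beta q=0$). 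Since the solution produced by the construction in Theorem~\ref{thm:existence} is the one obtained as the monotone limit of $L^1\cap L^\infty$ approximants, and that construction commutes with the dilation, uniqueness-within-the-construction gives $u(t,x)=v(t,x)$, i.e. $u(t,x)=t^{-\beta/\alpha}U(t^{-1/\alpha}x)$ with $U(x):=u(1,x)$.

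For part (2), the upper bound $U(x)\le C_1(1+|x|)^{-\beta}$ is the crux and is where the remark after Theorem~\ref{thm:existence} — that the first bound in \eqref{eq:existence} is not optimal because it ignores $x_1$ — gets repaired. The idea is to iterate the Duhamel formula once: $U = P_1u_0 + (\text{bilinear term in }u)$, and bound the bilinear term $\int_0^1\!\int_{\RR^d} |b\cdot\nabla_x p(1-s,x,z)|\,|u(s,z)|^{1+q}\,dz\,ds$ using the already-known bound $|u(s,z)|\le C s^{-1/\alpha}\int|u_0(y)|p^{(d-1)}(s,\tilde z,\tilde y)\,dy$ together with the gradient estimate $|\nabla_x p(t,x,z)|\lesssim t^{-1/\alpha}p(ct,x,z)$ (or the standard $|\nabla p(t,x)|\asymp \min(t^{-(d+1)/\alpha},\,t/|x|^{d+1+\alpha})$). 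One then has to carry out the space-time convolution of three heat-type kernels against $|y|^{-\beta(1+q)}$; this is a 3-$G$-type estimate and the output should again be comparable to $P_1u_0(x)\asymp(1+|x|)^{-\beta}$ (using $1+q = 1+(\alpha-1)/\beta$ and $\beta<d$). Because $P_1u_0(x)\asymp(1+|x|)^{-\beta}$ by a direct computation, the upper bound for $U$ follows; the lower bound uses $u\ge 0$ (which holds when $u_0\ge 0$ and $b$ is such that the construction preserves positivity — inherited from \eqref{eq:estJDE} at the approximant level) so that $U(x)\ge P_1u_0(x) - |\text{bilinear term}| \gtrsim (1+|x|)^{-\beta}$ once the bilinear term is shown to be of lower order, which is exactly \eqref{eq:sss3a}.

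Part (3) refines part (2): \eqref{eq:sss3a} says the first Duhamel correction is $O((1+|x|)^{-\beta-(\alpha-1)})$, and \eqref{eq:sss3b} says that after subtracting the explicit leading term of that correction — obtained by replacing $u(s,z)$ by $P_su_0(z)$ in the bilinear term and changing variables $s=r^\alpha$, $z=rw$ (which is where the factor $r^{d-\beta}$ and the exponent $1+q$ on $P_1u_0$ come from, via the scaling $P_su_0(rw)=r^{-\beta}P_1u_0(w)$) — the remainder is $O((1+|x|)^{-\beta-2(\alpha-1)})$. So one iterates Duhamel a second time: $u(s,z)=P_su_0(z)+(\text{bilinear})$, plug into the bilinear term, the main piece gives the stated integral and the error terms are two further 3-$G$/4-$G$ convolutions, each contributing an extra decay factor $(1+|x|)^{-(\alpha-1)}$; the bookkeeping here is the main technical obstacle of the whole theorem, since one must track decay in $|x|$ sharply and control the time-integrals near $s=0$ and $s=1$. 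Finally, for part (4), $C^1$-regularity and $|\nabla U(x)|\lesssim(1+|x|)^{-\beta}$: differentiate the Duhamel formula, $\nabla U = \nabla P_1u_0 + \int_0^1\!\int \nabla_x(b\cdot\nabla_x p(1-s,x,z))\,u^{1+q}(s,z)\,dz\,ds$; the first term is $\asymp\nabla P_1u_0(x)$ which one checks decays like $(1+|x|)^{-\beta-1}$ away from $0$ and is bounded near $0$ since $\beta<d$; for the second term the kernel $|\nabla_x^2 p(1-s,x,z)|\lesssim (1-s)^{-2/\alpha}p(c(1-s),x,z)$ has a nonintegrable singularity as $s\to 1$, so near $s=1$ one splits off $u^{1+q}(1,x)$, uses $\int_{\RR^d}\nabla_x^2p(t,x,z)\,dz=0$, and estimates the difference $u^{1+q}(s,z)-u^{1+q}(1,x)$ by Hölder continuity of $u$ (which itself follows from the gradient bound and a bootstrap) — this is the standard Schauder-type argument and delivers both continuity of $\nabla U$ and the claimed bound $(1+|x|)^{-\beta}$. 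I expect the dominant difficulty throughout to be the family of multi-kernel space-time convolution estimates, carried out with enough care in the $|x|$-variable to see the gain $(1+|x|)^{-(\alpha-1)}$ at each iteration; once those are in hand, parts (1)–(4) assemble by repeated application of the Duhamel identity.
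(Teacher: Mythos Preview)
Your plan for (1), (3), and the lower bound in (2) matches the paper's: selfsimilarity via scaling-compatibility of the monotone construction, asymptotics by iterating Duhamel and tracking a gain of $(1+|x|)^{-(\alpha-1)}$ per iteration, and the lower bound as a corollary of \eqref{eq:sss3a}.

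The real gap is the upper bound in (2). For $u_0(x)=M|x|^{-\beta}$ the bound \eqref{eq:existence} reduces to $U(x)\lesssim(1+|\tilde x|)^{1-\beta}$, which carries \emph{no decay in $x_1$}. Your proposal is to plug this back into the Duhamel integral and do a 3G estimate, but that cannot recover $x_1$-decay: if $|u(s,z)|^{1+q}$ is bounded by a function of $(s,\tilde z)$ only and one uses $|b\cdot\nabla_x p(1-s,x,z)|\lesssim(1-s)^{-1/\alpha}p(1-s,x,z)$, the $z_1$-integral collapses to $p^{(d-1)}(1-s,\tilde x,\tilde z)$ and the output is again a function of $\tilde x$ alone. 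Iterating an $x_1$-independent bound any number of times stays $x_1$-independent; the claimed comparison with $P_1u_0(x)\asymp(1+|x|)^{-\beta}$ is not available this way. The paper takes a genuinely different route: from the crude bound and the integral conservation law it first proves $U\in L^\gamma$ for all $\gamma>d/\beta$ (Lemma~\ref{prop:Lpest}), then uses H\"older with this $L^\gamma$ control to show $U(x)\to0$ as $|x|\to\infty$ (Lemma~\ref{lem:U->0}), and finally runs an absorption argument at the approximant level (Proposition~\ref{thm:Uest}) for $A_n=\sup_{t,x}(u^{(n)})^*(t,x)(1+|x|)^\beta$, using the vanishing at infinity to make the nonlinear term small. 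Since Proposition~\ref{thm:asymp} takes the upper bound as its induction base, your argument for (3) also stalls without this chain.

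For (4), your Schauder-type plan (use $\int\nabla_x^2 p=0$ plus H\"older continuity of $u$) is different from the paper's and has a mild circularity: the H\"older bound you invoke is to come from the gradient bound you are proving. The paper sidesteps this by integrating by parts on $[(1-\varepsilon)t,t]$, moving one derivative onto $u^{(n)}$ so that only a $(1-s)^{-1/\alpha}$ singularity remains; an absorption argument on $A_n=\sup_{t\le1,\,x}t^{1/\alpha}(t^{1/\alpha}+|x|)^\beta|\nabla u^{(n)}(t,x)|$ (finite for each $n$ by \cite{MR2407204}) then yields a uniform bound, and $\nabla U$ together with \eqref{eq:sss4} follow by passing to the limit.
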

\begin{rem}
By Lemma \ref{lem:estpa}, the bounds in \eqref{eq:sss2} may be equivalently written in the form
\begin{align*}
u(t,x) \approx \frac{1}{(t^{1/\alpha}+|x|)^\beta} \approx P_t u_0(x), \qquad t>0,\; x\in\RR^d,
\end{align*}
which resembles both: \eqref{eq:estJDE} by representation involving the semi-group,  and the estimates of  the source solution in the critical case derived  in  \cite{JS2} by the exact form.
Similarly, since $\nabla_x u(t,x) = \lambda^{\beta+1}\nabla_x(\lambda^\alpha t,\lambda x)$, by \eqref{eq:sss4} we have
$$|\nabla_x u(t,x)|\lesssim \frac{1}{t^{1/\alpha}(t^{1/\alpha}+|x|)^\beta}, \qquad t>0,\; x\in\RR^d.$$
\end{rem}

The paper is organized as follows. In Section 2 we introduce the notation used in the paper and give some preliminary results on the density $p(t,x,y)$ and solutions to \eqref{eq:problem}. Section 3 is devoted to some general estimates and proof of Theorem \ref{thm:existence}. In Section 4 we prove Theorem \ref{thm:sss}.


\section{Preliminaries}
\subsection{Notation}
We will use the following notation. We write '':='' to indicate definitions, e.g. $a \wedge b := \min\{a,b\}$ and $a \vee b := \max\{a,b\}.$ {All the considered functions are tacitly assumed to be Borel measurable.} We write $f(x) \lesssim g(x)$ if $f, g \geq 0$ and there is a number $c>0$ not depending on other parameters than $d$, $\alpha$, $\beta$, $M$ and $b$ such that $f(x) \leq c g(x) $ for all arguments $x$. If $f \lesssim g$ and $g \lesssim f$, we write $f(x) \approx g(x)$. If the comparability constant depends additionally on some other parameters (e.g. on $\gamma$), we indicate the parameter over the comparison sign (e.g.  $f \stackrel{\gamma}{\approx} g$).

By $\nabla = (\partial_{x_1}, \ldots, \partial_{x_d})$ we denote the standard gradient operator, while $\nabla^\bk = \partial^{k_1}_{x_1} \ldots \partial^{k_d}_{x_d}$, where $\bk = (k_1, \ldots,k_d) \in \NN_0^d$. The norm of the multi-index $\bk$ is given by $|\bk| = k_1+\ldots+k_d$. By writing $\nabla_x$ and $\nabla^\bk_x$ we indicate  variable with respect to which the derivatives are taken.

For any $x =(x_1,x_2,\ldots,x_d)\in \RR^d$ we denote $\tilde x:=(x_2,x_3,...,x_d)\in\RR^{d-1}$. For any function $f\colon(0,\infty)\times\RR^d\rightarrow\RR$, we define its rescaled version by
\begin{align}\label{def:star}
	f^*(t,x):=t^{\beta/\alpha}f(t,t^{1/\alpha}x).
\end{align}
Similarly, we put
\begin{align*}
P_t^*f(x) := t^{\beta/\alpha} P_t f(t^{1/\alpha}x) = t^{\beta/\alpha} \int_{\RR^d} p(t,t^{1/\alpha}x,y) f(y) dy.
\end{align*}
{For any $a \in \RR$ and $\gamma>1$ we define the french power $\fp{a}{\gamma} = a |a|^{\gamma-1}$. In particular, $|u(s,z)|u(s,z)^q = \fp{u(s,z)}{q+1}$.}
\subsection{Fractional Laplacian semigroup }
Although in the paper we generally assume  $\alpha \in (1,2)$ and $d \ge2$, the results of this section are valid for $\alpha \in(0,2)$ and $d\ge1$. The methodology we propose in the next sections requires consideration of  the density of the semigroup $P_t$ in various dimensions at the same time. Therefore,  for  $m\in\{1,2,3,\ldots\}$ we put 
\begin{align}\label{def:pm}
p^{(m)}(t,x) = \frac{1}{(2\pi)^m} \int_{\RR^m} e^{ix \cdot\xi} e^{-t|\xi|^\alpha}, \qquad t>0,\; x\in \RR^m,
\end{align}
and denote $p^{(m)}(t,x,y) = p^{(m)}(t,y-x)$, $t>0,\,x,y\in\RR^m$.  
In the case  $m=d$ we simply write $p^{(d)}(t,x,y) = p(t,x,y) = p(t,y-x)$.
Below, we collect some basic properties of the density $p$.
The following scaling property holds
\begin{align}\label{eq:pscaling}
p(\lambda^{\alpha} t,\lambda x,\lambda y)=\lambda^{-d}p(t,x,y), \qquad \lambda>0.
\end{align}
Furthermore, the function $p^{(m)}$ admits the two-sided estimates 
\begin{align}\label{eq:pest}
p^{{(m)}}(t,x,y)\approx \frac{t}{\(t^{1/\alpha}+|x-y|\)^{{m}+\alpha}}, \qquad t>0,\, x,y\in\RR^m, 
\end{align}
where the constants in the bounds depend on $m$ and $\alpha$. In particular, this implies
\begin{align}\label{eq:Lpp}
\|p(t,\cdot)\|_\gamma &=  t^{-\frac d\alpha (1-1/\gamma)}\|p(1,\cdot)\|_\gamma, \qquad t>0,\\
p(t,x,y) &\lesssim t^{-1/\alpha}p^{(d-1)}(t,\tilde x,\tilde y),  \hspace{29pt}  t>0,\,x,y,\in\RR^d. \label{eq:p<p(d-1)}
\end{align}
\begin{lemma}
For $t>0$, $\tilde{x} \in \RR^{d-1}$, $y\in \RR^d$, we have
\begin{align}\label{eq:intp}
\int_{\RR}p(t,x,y)dx_1=p^{(d-1)}(t,\tilde x, \tilde y).
\end{align}
\end{lemma}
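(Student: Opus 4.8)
The statement to prove is the identity
\[
\int_{\RR}p(t,x,y)\,dx_1=p^{(d-1)}(t,\tilde x,\tilde y),
\]
i.e.\ integrating the $d$-dimensional heat kernel of $\Delta^{\alpha/2}$ over the first spatial variable produces the $(d-1)$-dimensional heat kernel. This is a routine but useful Fubini-type computation, and the plan is to carry it out on the Fourier side.

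\medskip

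\textbf{Plan.} First I would write $p(t,x,y)=p(t,y-x)$ and use the Fourier representation \eqref{def:pm} with $m=d$:
\[
p(t,y-x)=\frac{1}{(2\pi)^d}\int_{\RR^d}e^{i(y-x)\cdot\xi}e^{-t|\xi|^\alpha}\,d\xi .
\]
Splitting coordinates as $\xi=(\xi_1,\tilde\xi)$ with $\tilde\xi\in\RR^{d-1}$, and $x=(x_1,\tilde x)$, $y=(y_1,\tilde y)$, the exponent is $i(y_1-x_1)\xi_1+i(\tilde y-\tilde x)\cdot\tilde\xi-t|\xi|^\alpha$. Integrating in $x_1$ over $\RR$ and formally exchanging the order of integration, the $x_1$-integral of $e^{-ix_1\xi_1}$ yields $2\pi\,\delta_0(\xi_1)$ in the distributional sense; evaluating the remaining integral at $\xi_1=0$ collapses $|\xi|^\alpha=(\xi_1^2+|\tilde\xi|^2)^{\alpha/2}$ to $|\tilde\xi|^\alpha$ and removes one factor of $2\pi$, giving exactly
\[
\frac{1}{(2\pi)^{d-1}}\int_{\RR^{d-1}}e^{i(\tilde y-\tilde x)\cdot\tilde\xi}e^{-t|\tilde\xi|^\alpha}\,d\tilde\xi=p^{(d-1)}(t,\tilde x,\tilde y).
\]

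\medskip

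\textbf{Rigorous justification.} To make the $\delta_0$ heuristic rigorous, I would instead argue by semigroup/probabilistic structure: the process generated by $\Delta^{\alpha/2}$ in $\RR^d$ is the isotropic $\alpha$-stable Lévy process, whose coordinate projection onto the last $d-1$ coordinates is itself the isotropic $\alpha$-stable process in $\RR^{d-1}$ (the characteristic exponent $|\xi|^\alpha$ restricted to $\xi_1=0$ is $|\tilde\xi|^\alpha$). Hence the marginal density of $(X_2,\dots,X_d)$ at time $t$ is $p^{(d-1)}(t,\cdot)$, which is precisely the claimed identity. Alternatively, and perhaps cleanest for a self-contained write-up, one uses Fubini directly: by \eqref{eq:pest} the function $x_1\mapsto p(t,(x_1,\tilde x),(y_1,\tilde y))$ is integrable over $\RR$ with integral controlled uniformly, so one may interchange the $x_1$-integral with the $\tilde\xi$-integral after first integrating out $\xi_1$; the inner one-dimensional computation $\int_\RR p^{(1)}(t,x_1,y_1)\,dx_1=1$ (total mass one, from \eqref{def:pm} with $m=1$ evaluated via $\hat p^{(1)}(t,0)=1$) together with the product-type decomposition of the Fourier integral closes the argument.

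\medskip

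\textbf{Main obstacle.} There is no real difficulty here; the only point requiring a line of care is the interchange of integrals (equivalently, making the $\delta_0$ manipulation rigorous), which is immediate from the integrability bound \eqref{eq:pest}. I expect the author simply invokes Fubini's theorem together with the Fourier representation \eqref{def:pm}, or cites the stability of the isotropic $\alpha$-stable law under coordinate projection.
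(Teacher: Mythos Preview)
Your proposal is correct in substance but takes a different route from the paper. The paper's proof uses the subordination formula
\[
p^{(m)}(t,x,y)=\int_0^\infty(4\pi s)^{-m/2}e^{-|x-y|^2/4s}\,\eta_t(s)\,ds,
\]
which expresses each $p^{(m)}$ as a mixture of Gaussians with the \emph{same} mixing measure $\eta_t$ in every dimension. Since the $d$-dimensional Gaussian genuinely factors over coordinates, integrating out $x_1$ under the mixture integral is immediate (Tonelli applies, everything is nonnegative), and one lands directly on the $(d-1)$-dimensional subordinated expression.

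Your characteristic-function/probabilistic argument is equally valid: the marginal of the isotropic $\alpha$-stable density in $\RR^d$ onto $\RR^{d-1}$ has Fourier transform $e^{-t|(0,\tilde\xi)|^\alpha}=e^{-t|\tilde\xi|^\alpha}$, which identifies it as $p^{(d-1)}(t,\cdot)$. One small caution about your ``alternatively'' sketch: you invoke a ``product-type decomposition of the Fourier integral'' together with the one-dimensional mass identity for $p^{(1)}$, but for $\alpha\neq2$ the multiplier $e^{-t|\xi|^\alpha}$ does \emph{not} factor over coordinates, so $p$ is not a product of one-dimensional kernels and $p^{(1)}$ never enters. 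The clean rigorous version of your idea is simply to compute the $(d-1)$-dimensional Fourier transform of $\tilde z\mapsto\int_\RR p(t,(z_1,\tilde z))\,dz_1$ via Fubini (justified since $p(t,\cdot)\in L^1(\RR^d)$) and read off $e^{-t|\tilde\xi|^\alpha}$.

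The subordination approach has the side benefit of introducing formula \eqref{eq:subordination}, which the paper reuses immediately afterwards to bound $\nabla^{\bk}p$; your Fourier route is more direct for this single lemma but does not feed into the next one.
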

\begin{proof}
By the subordination formula (see \cite{MK}) there is a nonnegative function $\eta_t(s)$ such that 
\begin{align}\label{eq:subordination}
p^{(m)}(t,x,y) = \int_0^\infty (4\pi s)^{-m/2} e^{-|x-y|^2/4s} \eta_t(s) ds, \qquad m \in \NN, t>0, \,x,y\in\RR^m.
\end{align}
Hence,
\begin{align*}
\int_{\RR}p(t,x,y)dx_1&=\int_0^\infty \int_{\RR}(4\pi s)^{-d/2} e^{-|x-y|^2/4s} \eta_t(s) dx_1 ds \\
&= \int_0^\infty (4\pi s)^{-(d-1)/2} e^{-|\tilde x-\tilde y|^2/4s} \eta_t(s) ds =  p^{(d-1)}(t,\tilde x, \tilde y).
\end{align*}
\end{proof}
The subordination formula \eqref{eq:subordination} allows us also to derive the following \mbox{estimates of $\nabla^\bk p$.}
\begin{lemma} For any $\bk\in \NN_0^d$, $t>0$ and $x\in\RR^d$, we have
\begin{align}\label{eq:gradp}
|\nabla^{\bk}_x p(t,x)|\stackrel{\bk}{\lesssim}  \frac{p(t,x)}{(t^{1/\alpha}+|x|)^{|\bk|}}.
\end{align}
\end{lemma}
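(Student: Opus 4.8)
The estimate \eqref{eq:gradp} should follow directly from the subordination formula \eqref{eq:subordination} by differentiating under the integral sign. First I would write
\begin{align*}
\nabla^\bk_x p(t,x) = \int_0^\infty (4\pi s)^{-d/2}\, \nabla^\bk_x\!\left(e^{-|x|^2/4s}\right) \eta_t(s)\, ds,
\end{align*}
where differentiating $e^{-|x|^2/4s}$ produces $e^{-|x|^2/4s}$ times a polynomial in the variables $x_j/s$. More precisely, $\nabla^\bk_x e^{-|x|^2/4s} = Q_\bk(x/\sqrt s)\, s^{-|\bk|/2}\, e^{-|x|^2/4s}$ for some polynomial $Q_\bk$ of degree $|\bk|$. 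The key pointwise bound is that $|Q_\bk(x/\sqrt s)|\, e^{-|x|^2/8s} \lesssim_\bk 1$ uniformly (since $|z|^{|\bk|} e^{-|z|^2/8}$ is bounded), so that
\begin{align*}
\left|\nabla^\bk_x e^{-|x|^2/4s}\right| \stackrel{\bk}{\lesssim} s^{-|\bk|/2}\, e^{-|x|^2/8s}.
\end{align*}

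Next, I would split the heat kernel exponent to absorb one factor, writing $s^{-|\bk|/2} e^{-|x|^2/8s} = s^{-|\bk|/2} e^{-|x|^2/8s}$ and noting we want to compare with $p(t,x)/(t^{1/\alpha}+|x|)^{|\bk|}$. The plan is to consider two regimes inside the $s$-integral. When $\sqrt s \gtrsim t^{1/\alpha}$ (equivalently $s\gtrsim t^{2/\alpha}$), one has $s^{-|\bk|/2} \lesssim t^{-|\bk|/\alpha} \le (t^{1/\alpha}+|x|)^{-|\bk|}\cdot(\text{const})$ after using $t^{1/\alpha}\le t^{1/\alpha}+|x|$; combined with the remaining Gaussian and $\eta_t(s)$ this piece is $\lesssim (t^{1/\alpha}+|x|)^{-|\bk|} p(t,x)$. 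When $\sqrt s \lesssim t^{1/\alpha}$, I instead borrow from the spatial decay: use $s^{-|\bk|/2} e^{-|x|^2/16s} \lesssim |x|^{-|\bk|}$ (again since $u^{|\bk|}e^{-u^2/16}$ is bounded with $u=|x|/\sqrt s$), keep one copy $e^{-|x|^2/16s}$ for the remaining heat kernel, and obtain $\lesssim |x|^{-|\bk|} p(t,x) \lesssim (t^{1/\alpha}+|x|)^{-|\bk|} p(t,x)$ since in this regime... actually cleaner: simply use that $\min(s^{-|\bk|/2}, \text{something})$. The robust way is: $s^{-|\bk|/2} e^{-|x|^2/8s} \lesssim_\bk (\sqrt s + |x|)^{-|\bk|}$ for all $s>0$ (split at $\sqrt s \lessgtr |x|$), and then since $\eta_t$ is supported in a way that the Gaussian localizes $s$ near $t^{2/\alpha}$, or more simply $(\sqrt s+|x|)^{-|\bk|}\ge$ ... — here I would invoke that $\int_0^\infty (4\pi s)^{-d/2}(\sqrt s+|x|)^{-|\bk|} e^{-|x|^2/8s}\eta_t(s)\,ds$ is comparable to $(t^{1/\alpha}+|x|)^{-|\bk|}$ times $p(t,x)$, which follows from the two-sided estimate \eqref{eq:pest} applied after a further routine splitting of the $s$-integral according to whether $s \lessgtr t^{2/\alpha}$.

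A technically smoother route, which I would likely prefer for the write-up, is to use the scaling property \eqref{eq:pscaling}: it suffices to prove \eqref{eq:gradp} for $t=1$, since $\nabla^\bk_x p(t,x) = t^{-(d+|\bk|)/\alpha}(\nabla^\bk p)(1, t^{-1/\alpha}x)$ and both sides of the claimed inequality scale identically. For $t=1$ one only needs $|\nabla^\bk p(1,x)| \lesssim_\bk p(1,x)/(1+|x|)^{|\bk|}$, and since $p(1,x)\approx (1+|x|)^{-d-\alpha}$ by \eqref{eq:pest}, this reduces to the bound $|\nabla^\bk p(1,x)| \lesssim_\bk (1+|x|)^{-d-\alpha-|\bk|}$. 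For bounded $x$ this is immediate from smoothness of $p(1,\cdot)$; for large $|x|$ it follows from the subordination integral as above, where the Gaussian factor $e^{-|x|^2/8s}$ forces the decay and $\eta_1(s)$ has enough integrability (the moments $\int_0^\infty s^{-\theta}\eta_1(s)\,ds$ are finite in the needed range because $\eta_1$ decays like a stable density). The main obstacle, and the only place demanding care, is controlling the small-$s$ behaviour of the $s$-integral: near $s=0$ the factors $s^{-d/2}$ and $s^{-|\bk|/2}$ blow up, and one must use the Gaussian $e^{-|x|^2/cs}$ (valid since $|x|$ is bounded away from $0$ in this regime) to kill them, together with the known asymptotics of $\eta_t(s)$ as $s\to 0^+$. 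This is a standard computation, and I would present it compactly by reducing to the one-variable estimate $\sup_{u>0} u^{N} e^{-u^2/c} < \infty$ and the finiteness of a few explicit moments of $\eta_1$.
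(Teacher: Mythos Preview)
Your approach via subordination is correct in spirit and can be completed, but it is noticeably more laborious than the paper's argument and leans on facts about the subordinator density $\eta_t$ (its small-$s$ asymptotics, or finiteness of certain negative moments) that the paper never needs to invoke. The paper also starts from the subordination formula, but instead of sacrificing part of the Gaussian (your $e^{-|x|^2/4s}=e^{-|x|^2/8s}e^{-|x|^2/8s}$ trick), it keeps the full exponent $e^{-|x|^2/4s}$ intact and simply bounds the Hermite-polynomial factor by $c_\bk\bigl(1+|x/2\sqrt s|^{|\bk|}\bigr)$. The point is that $s^{-|\bk|/2}(4\pi s)^{-d/2}e^{-|x|^2/4s}$ is, up to a constant, the $(d+|\bk|)$-dimensional Gaussian evaluated at $x_+=(x,0,\dots,0)$, and likewise $s^{-|\bk|}(4\pi s)^{-d/2}e^{-|x|^2/4s}$ is the $(d+2|\bk|)$-dimensional one; integrating against $\eta_t(s)\,ds$ therefore returns the stable densities $p^{(d+|\bk|)}(t,x_+)$ and $p^{(d+2|\bk|)}(t,x_{++})$, for which the sharp two-sided bound \eqref{eq:pest} is already in hand. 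One line of algebra then gives \eqref{eq:gradp}. What this buys over your route is that no splitting of the $s$-integral, no reduction to $t=1$, and no information about $\eta_t$ beyond the subordination identity itself is required; your argument, by contrast, ultimately needs either the large-$s$ tail $\eta_1(s)\sim c\,s^{-1-\alpha/2}$ or an equivalent moment bound to close the estimate for large $|x|$.
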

\begin{proof}
Let
$\bk=(k_1,\ldots,k_d) \in \NN_0^d$ and 
 $g(s,x) = (4\pi s)^{-d/2} e^{-|x|^2/(4s)}$ be the Gaussian kernel. Then,
\begin{align*}
\nabla^{\bk} g(s,x) = (4s)^{-|\bk|/2} g(s,x) \prod_{i=1}^d H_{k_i}(x_i/2\sqrt s), 
\end{align*}
where $H_n$ are the Hermite polynomials. By subordination formula \eqref{eq:subordination} and the Fubini theorem, we have 
\begin{align*}
\nabla^{\bk} p(t,x) &=  \int_0^\infty (4s)^{-|\bk|/2} g(s,x) \prod_{i=1}^d H_{k_i}(x_i/2\sqrt s) \eta_t(s)  \,ds.
\end{align*}
Since every $H_{k_i}$ is a polynomial of degree $k_i$, we bound
$$\left|\prod_{i=1}^d H_{k_i}(x_i/2\sqrt s)\right|\stackrel{\bk}{\lesssim} \prod_{i=1}^d [1+|x_i/2\sqrt s|^{k_1}]\lesssim 1+|x/2\sqrt s|^{|\bk|}.$$
Consequently
\begin{align*}
|\nabla^{\bk} p(t,x) |&\stackrel{\bk}{\lesssim}  \int_0^\infty (4s)^{-|\bk|/2} g(s,x){\eta_t(s)}\,ds+|x|^{|\bk|}\int_0^\infty (4s)^{-|\bk|} g(s,x){\eta_t(s)}\,ds\\
&=p^{(d+|\bk|)}(t,x_+)+|x|^{|\bk|}p^{(d+2|\bk|)}(t,x_{++}),
\end{align*}
where ${x_{+}} = (x_1,\ldots,x_d,0,\ldots,0) \in \RR^{d+|\bk|}$ and ${x_{++}} = (x_1,\ldots,x_d,0,\ldots,0) \in \RR^{d+2|\bk|}$. Eventually, by \eqref{eq:pest}, we obtain
\begin{align*}
|\nabla^{\bk} p(t,x)| \stackrel{\bk}{\lesssim} \frac{ t}{(t^{1/\alpha} + |x|)^{d+\alpha+|\bk|}}+\frac{|x|^{|\bk|} t}{(t^{1/\alpha} + |x|)^{d+\alpha+2|\bk|}} \approx \frac{1}{(t^{1/\alpha} + |x|)^{|\bk|}} p(t,x),
\end{align*}
as required.
\end{proof}

The next lemma describes the behaviour of  $P_t$ acting on the function $(r^{1/\alpha}+|y|)^{-\gamma}$, which, as we will show, is comparable with a solution to \eqref{eq:problem} for $u_0(x)=M|x|^{-\gamma}$.
\begin{lemma}  \label{lem:estpa}
Let  $0<\gamma<d$.  For $x\in\RR^d$, $t >0$ and $r\ge0$  we have 
\begin{align}\label{eq:estpa}
\int_{\RR^d} p(t,x,y) (r^{1/\alpha}+|y|)^{-\gamma} dy\stackrel{\gamma}{\approx}\frac{1}{(r^{1/\alpha}+t^{1/\alpha}+|x|)^{\gamma}}.  
\end{align}
\end{lemma}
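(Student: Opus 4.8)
The plan is to prove the comparison $\int_{\RR^d} p(t,x,y)(r^{1/\alpha}+|y|)^{-\gamma}\,dy \approx (r^{1/\alpha}+t^{1/\alpha}+|x|)^{-\gamma}$ by hand, splitting the integral according to the relative sizes of $|y|$ and the ``base scale'' $\rho:=r^{1/\alpha}+t^{1/\alpha}+|x|$, and using throughout the two-sided heat-kernel bound \eqref{eq:pest}, i.e. $p(t,x,y)\approx t\big(t^{1/\alpha}+|x-y|\big)^{-d-\alpha}$. First I would observe that it suffices to treat two cases by homogeneity-type reasoning: either (i) the ``singularity'' $r^{1/\alpha}$ dominates, so that $(r^{1/\alpha}+|y|)^{-\gamma}\approx r^{-\gamma/\alpha}$ is essentially constant and the left-hand side is $\approx r^{-\gamma/\alpha}\int p(t,x,y)\,dy = r^{-\gamma/\alpha}$, which matches the right-hand side when $r^{1/\alpha}\gtrsim t^{1/\alpha}+|x|$; or (ii) $r=0$, i.e. we must estimate $I(t,x):=\int_{\RR^d}p(t,x,y)|y|^{-\gamma}\,dy$ and show $I(t,x)\approx (t^{1/\alpha}+|x|)^{-\gamma}$. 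The general case then follows by comparing $(r^{1/\alpha}+|y|)^{-\gamma}$ with $\min\{r^{-\gamma/\alpha},|y|^{-\gamma}\}$ up to constants and adding the two contributions, or alternatively by the scaling \eqref{eq:pscaling} which reduces $(t,x,r)$ to $(1,x/t^{1/\alpha},r/t)$.

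For the core case (ii), write $\RR^d = A_1\cup A_2\cup A_3$ with $A_1=\{|y|\le \tfrac12(t^{1/\alpha}+|x|)\}$, $A_2=\{\tfrac12(t^{1/\alpha}+|x|)<|y|\le 2(t^{1/\alpha}+|x|)\}$, $A_3=\{|y|>2(t^{1/\alpha}+|x|)\}$. On $A_2$ we have $|y|^{-\gamma}\approx (t^{1/\alpha}+|x|)^{-\gamma}$, so that piece contributes $\approx (t^{1/\alpha}+|x|)^{-\gamma}\int_{A_2}p(t,x,y)\,dy \lesssim (t^{1/\alpha}+|x|)^{-\gamma}$; for the lower bound, $A_2$ always contains a ball of radius $\approx t^{1/\alpha}+|x|$ on which $p(t,x,y)\gtrsim t(t^{1/\alpha}+|x|)^{-d-\alpha}$, giving a contribution $\gtrsim t(t^{1/\alpha}+|x|)^{-\alpha-\gamma}$ — this is of the right order only when $t^{1/\alpha}\gtrsim |x|$, so for the lower bound when $|x|$ dominates one instead uses that near $y\approx x$ the kernel is large and $|y|^{-\gamma}\approx |x|^{-\gamma}$; I'd isolate the ball $\{|y-x|\le \tfrac12|x|\}$ in that regime. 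For $A_1$, bound $p(t,x,y)\lesssim t(t^{1/\alpha}+|x|)^{-d-\alpha}$ (since $|x-y|\gtrsim t^{1/\alpha}+|x|$ there is delicate — actually on $A_1$, $|x-y|\ge |x|-|y|\ge |x|/2$ when $|x|\ge t^{1/\alpha}$, and $t^{1/\alpha}+|x-y|\gtrsim t^{1/\alpha}$ always, so $p\lesssim t(t^{1/\alpha}+|x|)^{-d-\alpha}$), and then $\int_{A_1}|y|^{-\gamma}\,dy \approx (t^{1/\alpha}+|x|)^{d-\gamma}$ since $\gamma<d$ makes the integral converge at the origin and be dominated by the outer radius; multiplying gives $\lesssim t(t^{1/\alpha}+|x|)^{-\alpha-\gamma}\lesssim (t^{1/\alpha}+|x|)^{-\gamma}$. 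For $A_3$, use $p(t,x,y)\lesssim t|y|^{-d-\alpha}$ (as $|x-y|\ge |y|/2$ there) and compute $\int_{A_3} t|y|^{-d-\alpha-\gamma}\,dy \approx t(t^{1/\alpha}+|x|)^{-\alpha-\gamma}\lesssim (t^{1/\alpha}+|x|)^{-\gamma}$. Summing the three upper bounds yields $I(t,x)\lesssim (t^{1/\alpha}+|x|)^{-\gamma}$, and the $A_2$ (or near-$x$) lower bound yields the matching $\gtrsim$.

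The main obstacle is organizing the lower bound cleanly so that it covers both regimes $t^{1/\alpha}\gtrsim|x|$ and $|x|\gtrsim t^{1/\alpha}$ with a single clean choice of test region; the cleanest route is probably to restrict the integral to $\{|y|\le \tfrac12(t^{1/\alpha}+|x|)\}$, where $(r^{1/\alpha}+|y|)^{-\gamma}\ge (r^{1/\alpha}+\tfrac12(t^{1/\alpha}+|x|))^{-\gamma}\approx \rho^{-\gamma}$, and then use $\int_{\{|y|\le \frac12(t^{1/\alpha}+|x|)\}}p(t,x,y)\,dy \ge c$ for some absolute $c>0$ — this last fact follows from \eqref{eq:pest} by a direct computation (the complement has small $P_t$-mass relative to a fixed fraction, or more simply one notes the integral over the whole space is $1$ and the tail $\int_{\{|y|>\frac12(t^{1/\alpha}+|x|)\}}p(t,x,y)\,dy$ is bounded away from $1$; care is needed when $|x|$ is large, since then $x$ itself lies in the tail region — here instead restrict to $\{|y-x|\le t^{1/\alpha}\}\subset\{|y|\le 2(t^{1/\alpha}+|x|)\}$ on which $p\gtrsim t^{-d/\alpha}$ and the volume is $\approx t^{d/\alpha}$, giving mass $\gtrsim 1$, while $(r^{1/\alpha}+|y|)^{-\gamma}\gtrsim \rho^{-\gamma}$ there too). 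Once the absolute lower bound on the localized mass is established, the lower estimate $I\gtrsim\rho^{-\gamma}$ is immediate, completing the proof. The general $r>0$ statement then follows verbatim with $\rho = r^{1/\alpha}+t^{1/\alpha}+|x|$ throughout, since the only property of the factor used is $(r^{1/\alpha}+|y|)^{-\gamma}\approx\rho^{-\gamma}$ on $\{|y|\lesssim\rho\}$ and $(r^{1/\alpha}+|y|)^{-\gamma}\le |y|^{-\gamma}$ globally, plus $\gamma<d$ for integrability at the origin.
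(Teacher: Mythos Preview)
Your proposal is correct and self-contained, but it takes a genuinely different route from the paper. The paper's proof is two lines: it quotes the $r=0$ case
\[
\int_{\RR^d} p(t,x,y)\,|y|^{-\gamma}\,dy \stackrel{\gamma}{\approx} (t^{1/\alpha}+|x|)^{-\gamma}
\]
from an external reference \cite{MR3933622}, and then for $r>0$ observes that this very estimate says $(r^{1/\alpha}+|y|)^{-\gamma}\approx \int_{\RR^d} p(r,y,z)|z|^{-\gamma}\,dz$, so by Fubini and the Chapman--Kolmogorov identity the left-hand side becomes $\int_{\RR^d} p(t+r,x,z)|z|^{-\gamma}\,dz \approx ((t+r)^{1/\alpha}+|x|)^{-\gamma}$, and one finishes with $(t+r)^{1/\alpha}\approx t^{1/\alpha}+r^{1/\alpha}$. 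In other words, the paper exploits the semigroup structure to reduce general $r$ to $r=0$ at no cost.

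Your argument instead works directly from the pointwise kernel bound \eqref{eq:pest}, splitting $\RR^d$ into three annular regions and handling the lower bound by localizing to $\{|y-x|\le t^{1/\alpha}\}$. This has the advantage of being entirely self-contained (no appeal to \cite{MR3933622}) and of treating all $r\ge 0$ uniformly in one pass; the price is a longer computation with some mild case analysis. Both proofs are fine; the paper's is slicker once the $r=0$ case is available, while yours would be preferable if one wanted to avoid the external citation.
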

\begin{proof}
By \cite[Lemma 2.3]{MR3933622},
\begin{align}\label{eq1:estpa}
\int_{\RR^d} p(t,x,y)|y|^{-\gamma} dy\stackrel{\gamma}{\approx} \frac{1}{(t^{1/\alpha}+|x|)^{\gamma}},
\end{align}
which proves \eqref{eq:estpa} for $r=0$. Now, let $r>0$. By \eqref{eq1:estpa}, we get
\begin{align*}
\int_{\RR^d} p(t,x,y)(r^{1/\alpha}+|y|)^{-\gamma} dy &\approx  \int_{\RR^d} \int_{\RR^d} p(t,x,y) p(r,y,z) |z|^{-\gamma} dz dy \\
&= \int_{\RR^d} p(t+r,x,z) |z|^{-\gamma} dz  \approx \frac{1}{((r+t)^{1/\alpha}+|x|)^{\gamma}}.
\end{align*}
Since $(r+t)^{1/\alpha} \approx r^{1/\alpha} + t^{1/\alpha}$, we obtain \eqref{eq:estpa}.
\end{proof}

\subsection{Basic properties of solutions to \eqref{eq:problem}}
Let us observe that
 if $u$ and $v$ are the solutions to \ref{eq:problem} with initials conditions $u_0$ and $-u_0$, respectively, then $u(t,x) = -v(t,x)$ for all $t>0$ and $x\in\RR^d$.

As mentioned in Introduction, if $u_0\in L^1(\RR^d)\cap L^{\infty}(\RR^d)$ then there exists a unique solution $u(t,x)$ to the problem \eqref{eq:problem}. Furthermore, \cite[Corollary 3.1]{MR1881259} implies the monotonicity property of solutions. Namely, if $u$ and $v$ are the solutions of (1.1) with initials conditions
$u_0, v_0\in L^1(\RR^d)\cap L^{\infty}(\RR^d)  $ and $u_0 \geq v_0$, then
\begin{align}\label{eq:monotonicity}
u(t, x) \geq v(t, x),\qquad t > 0, x\in \RR^d. 
\end{align}
Let us now denote by $\overline u$ and $\underline u$ the solutions to \ref{eq:problem} with initials conditions $|u_0|$ and $-|u_0|$, respectively. Since $\overline u=-\underline u$, we get by \eqref{eq:monotonicity}
\begin{align}\label{eq:|u|<baru}
|u|\leq |\overline u|\vee|\underline u|=\overline u.
\end{align}

For technical reasons we will often consider the rescaled version of solutions. 
In order to represent $u^*$ (cf. \eqref{def:star}) in a recursive manner as in \eqref{eq:duhamel}, we rewrite the integral therein as follows.
\begin{align*}
& \int_0^t \int_{\RR^d} b \cdot \nabla_z p(t-s,t^{1/\alpha}x,z)  \fp{u(s,z)}{q+1}\,dz\,ds   \\
& = \int_0^t \int_{\RR^d} b \cdot \nabla_z p(t-s,t^{1/\alpha}x,s^{1/\alpha}w)  s^{d/\alpha}  \fp{u(s,s^{1/\alpha} w)}{q+1}\,dw\,ds  \\
& = \int_0^1 \int_{\RR^d}  b \cdot \nabla_w p(t-st,t^{1/\alpha}x,(st)^{1/\alpha} w)  t(st)^{d/\alpha}  \fp{u(st,(st)^{1/\alpha}w)}{q+1}\,dw\,ds  \\
& = \int_0^1 \int_{\RR^d}  b \cdot \nabla_w p(1-s,x,s^{1/\alpha} w) t^{1-\frac1\alpha} s^{d/\alpha}  \fp{u(st,(st)^{1/\alpha}w)}{q+1}\,dw\,ds  \\
& = \alpha \int_0^1 \int_{\RR^d} t^{1-\frac1\alpha} r^{d+\alpha-1} b \cdot\nabla_w p(1-r^\alpha,x,r w)   \fp{u(r^\alpha t,rt^{1/\alpha}w)}{q+1}\,dw\,dr \\
&=\alpha \int_0^1 \int_{\RR^d}  r^{d-\beta} b \cdot\nabla_w p(1-r^\alpha,x,r w)   \fp{u^*(r^\alpha t,w)}{q+1}\,dw\,dr.
\end{align*}
Thus, we have
\begin{align}\label{eq:duhrescalled}
 u^*(t,x)
=P_t^*u_0(x)+\alpha \int_0^1 \int_{\RR^d}  r^{d-\beta} b \cdot \nabla_x p(1-r^\alpha,x,r w)   \fp{u^*(r^\alpha t,w)}{q+1}\,dw\,dr.
\end{align}

\section{General results}

\subsection{An integral conservation law} 
Up till the end of the paper we assume that $\alpha \in (1,2)$ and $d \ge2$.
Let $F:\RR^{d-1}\rightarrow \RR$ be any function. 
For a function $h:\RR^d\rightarrow \RR$ we denote 
$$h_F:=\int_{\RR^d}h(x)F(\tilde x)dx,$$
whenever the integral is convergent.

\begin{lemma}\label{lem:weightedineq}
Let $F:\RR^{d-1}\rightarrow \RR$.  Then, for any $u$ satisfying \eqref{eq:duhamel} with {$|(P_tu_0)_F|<\infty$} for every $t>0$ and such that 
\begin{align}\label{eq:intcondition}
\int_0^t \int_{\RR^d} \int_{\RR^d} |\nabla_x p(t-s,x,z)|  |u(s,z)|^{q+1}|F(\tilde{x})|\, dx\,dz\,ds <\infty,
\end{align}
\ we have
\begin{align*}
u(t,\cdot)_F = (P_tu_0)_F, \qquad t>0.
\end{align*}
\end{lemma}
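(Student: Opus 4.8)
The claim is an integral conservation law: integrating the Duhamel formula \eqref{eq:duhamel} against the weight $F(\tilde x)$ in the $x$-variable, the nonlinear term drops out, so $u(t,\cdot)_F = (P_tu_0)_F$. The key structural fact is \eqref{eq:intp}: integrating $p(t,x,y)$ over the single coordinate $x_1 \in \RR$ gives $p^{(d-1)}(t,\tilde x,\tilde y)$, which is independent of $x_1$. The plan is to apply $\int_{\RR^d}(\,\cdot\,)F(\tilde x)\,dx$ to both sides of \eqref{eq:duhamel}, use Fubini to move the $x$-integral inside, and then for the nonlinear term integrate first in $x_1$. Since $F(\tilde x)$ does not depend on $x_1$, the $x_1$-integral of $b\cdot\nabla_x p(t-s,x,z)$ is what we need to compute.

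First I would write $b\cdot\nabla_x p(t-s,x,z) = b_1 \partial_{x_1} p(t-s,x,z)$, using the standing normalization $b=(|b|,0,\ldots,0)$. Then $\int_\RR \partial_{x_1} p(t-s,x,z)\,dx_1 = 0$, because $p(t-s,x,z)\to 0$ as $x_1 \to \pm\infty$ (it is a probability density in each variable, and the Gaussian-type decay from \eqref{eq:subordination} or the bound \eqref{eq:pest} makes this rigorous). Hence
\[
\int_{\RR^d} b\cdot\nabla_x p(t-s,x,z)\, F(\tilde x)\,dx
= \int_{\RR^{d-1}} \left(\int_\RR b_1\,\partial_{x_1} p(t-s,x,z)\,dx_1\right) F(\tilde x)\,d\tilde x = 0,
\]
and so the entire double integral in \eqref{eq:duhamel}, after being multiplied by $F(\tilde x)$ and integrated in $x$, vanishes. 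For the linear term, $\int_{\RR^d} P_tu_0(x) F(\tilde x)\,dx = (P_tu_0)_F$ by definition. Combining these gives $u(t,\cdot)_F = (P_tu_0)_F$.

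The only delicate point — and the reason hypothesis \eqref{eq:intcondition} is imposed — is justifying the use of Fubini's theorem to interchange the $x$-integration with the $(s,z)$-integration in the nonlinear term, and within that to do the $x_1$-integration first. Condition \eqref{eq:intcondition} is precisely the absolute-integrability statement
\[
\int_0^t \int_{\RR^d}\int_{\RR^d} |\nabla_x p(t-s,x,z)|\,|u(s,z)|^{q+1}\,|F(\tilde x)|\,dx\,dz\,ds < \infty,
\]
which licenses all the rearrangements; the assumption $|(P_tu_0)_F| < \infty$ makes the linear term (and hence $u(t,\cdot)_F$) well-defined. So I would state Fubini carefully, invoke \eqref{eq:intcondition}, perform the $x_1$-integration of $\partial_{x_1}p$ to get zero, and conclude. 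The main obstacle is thus not any deep computation but the bookkeeping of the integrability hypotheses and the careful order of integration; everything else follows from the antisymmetry $\int_\RR \partial_{x_1} p\,dx_1 = 0$ together with the $x_1$-independence of $F(\tilde x)$.
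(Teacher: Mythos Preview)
Your proposal is correct and follows essentially the same approach as the paper: integrate the Duhamel formula against $F(\tilde x)$, use the normalization $b=(|b|,0,\ldots,0)$ so that $b\cdot\nabla_x p = |b|\,\partial_{x_1}p$, invoke Fubini via hypothesis \eqref{eq:intcondition}, and conclude from $\int_{\RR}\partial_{x_1}p\,dx_1=0$ together with the $x_1$-independence of $F(\tilde x)$. The paper's proof is just a slightly terser version of what you wrote.
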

\begin{proof}
Since $b = (|b|,0, \ldots, 0)$, by \eqref{eq:duhamel} and Fubini theorem, we have 
\begin{align*} 
u(t,\cdot)_F&=\int_{\RR^d}\(P_tu_0(x) + \int_0^t \int_{\RR^d} b \cdot \nabla_x p(t-s,x,z)  \fp{u(s,z)}{q+1} \,dz\,ds\)F(\tilde{x})dx\\
&=(P_tu_0)_F +\int_0^t \int_{\RR^d}\( \int_{\RR^d} |b| \partial_{x_1} p(t-s,x,z) F(\tilde{x})dx \)  \fp{u(s,z)}{q+1}\,dz\,ds\\
&=(P_tu_0)_F,
\end{align*}
where we used \eqref{eq:pest} and the fact that $F(\tilde x)$ does not depend on $x_1$.
\end{proof}

In particular, we show that the assumptions of Lemma \ref{lem:weightedineq} are satisfied if  $u_0\in L^1(\RR^d)\cap L^{\infty}(\RR^d)$ and $F$ is bounded.

\begin{cor}\label{lem:mcu0}
Suppose  $u$ is a solution to \eqref{eq:problem} with   $u_0\in L^1(\RR^d)\cap L^{\infty}(\RR^d)$. Let $F \colon \RR^{d-1} \to \RR$  be bounded. Then, 
\begin{align*}
\int_{\RR^d} F(\tilde{w}) u(t,w)\, dw = \int_{\RR^d} F(\tilde{w}) P_tu_0(w)\, dw, \qquad t>0.
\end{align*}
\end{cor}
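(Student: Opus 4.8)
The plan is to verify that the pair $(u,F)$ satisfies the two hypotheses of Lemma~\ref{lem:weightedineq} and then simply invoke that lemma. So the task reduces to two estimates: first, $|(P_tu_0)_F|<\infty$ for every $t>0$; second, the triple integral condition \eqref{eq:intcondition} is finite.

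For the first point, since $u_0\in L^1(\RR^d)$ we have $\|P_tu_0\|_1\le\|u_0\|_1<\infty$, and $F$ is bounded, so $|(P_tu_0)_F|\le\|F\|_\infty\|u_0\|_1<\infty$ immediately. For the second point, I would use the a priori bounds available for $L^1\cap L^\infty$ initial data: by \eqref{eq:BKWnorms} (with $\gamma=\infty$) we have $\|u(s,\cdot)\|_\infty\le\|u_0\|_\infty$ for all $s>0$, and by \eqref{eq:BKWnorms} with $\gamma=1$ we have $\|u(s,\cdot)\|_1\le\|u_0\|_1$. Hence $\int_{\RR^d}|u(s,z)|^{q+1}\,dz\le\|u(s,\cdot)\|_\infty^q\,\|u(s,\cdot)\|_1\le\|u_0\|_\infty^q\|u_0\|_1$ uniformly in $s$. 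Writing the integrand in \eqref{eq:intcondition} and doing the $z$-integral first against this uniform bound, I am left with
\begin{align*}
\int_0^t\int_{\RR^d}|\nabla_x p(t-s,x,z)|\,|F(\tilde x)|\,dx\,ds\le\|F\|_\infty\int_0^t\int_{\RR^d}|\nabla_x p(t-s,x,z)|\,dx\,ds,
\end{align*}
and by \eqref{eq:gradp} together with \eqref{eq:pest} one has $\int_{\RR^d}|\nabla_x p(\tau,x)|\,dx\lesssim \tau^{-1/\alpha}$, so $\int_0^t(t-s)^{-1/\alpha}\,ds=\frac{\alpha}{\alpha-1}t^{1-1/\alpha}<\infty$ because $\alpha>1$. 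Thus \eqref{eq:intcondition} holds.

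With both hypotheses checked, Lemma~\ref{lem:weightedineq} applied with the bounded function $F$ gives $u(t,\cdot)_F=(P_tu_0)_F$ for all $t>0$, which is exactly the claimed identity $\int_{\RR^d}F(\tilde w)u(t,w)\,dw=\int_{\RR^d}F(\tilde w)P_tu_0(w)\,dw$. The only mild subtlety — and the step I would be most careful about — is justifying the Fubini interchange and the finiteness bookkeeping rigorously: one must be sure the uniform $L^1$ and $L^\infty$ control of $u(s,\cdot)$ is legitimately available (it is, via the cited \cite[Theorem 3.1, Corollary 3.1]{MR1881259} and \eqref{eq:BKWnorms}), and that the $x$-integral of $|\nabla_x p|$ is genuinely $O(\tau^{-1/\alpha})$, which is where the assumption $\alpha>1$ enters to make the time integral converge near $s=t$. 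Everything else is a direct substitution into the already-proved Lemma~\ref{lem:weightedineq}.
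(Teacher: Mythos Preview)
Your proof is correct and follows essentially the same route as the paper: verify the two hypotheses of Lemma~\ref{lem:weightedineq} using $\|P_tu_0\|_1\le\|u_0\|_1$ for the first, and the a priori bounds \eqref{eq:BKWnorms} together with $\int_{\RR^d}|\nabla_x p(\tau,x)|\,dx\lesssim\tau^{-1/\alpha}$ for the second. The paper phrases the second estimate slightly differently---it bounds $|\nabla p|\lesssim (t-s)^{-1/\alpha}p$ via \eqref{eq:gradp}, integrates $p$ in $w$ to get $1$, then integrates $|u(s,\cdot)|$ in $z$---but the arithmetic is identical and both land on $K\|u_0\|_\infty^q\|u_0\|_1\,\frac{\alpha}{\alpha-1}t^{1-1/\alpha}$.

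One small exposition slip to fix: after ``doing the $z$-integral first'' your displayed expression still contains $z$. What you actually need is Tonelli plus translation invariance: $\int_{\RR^d}|\nabla_x p(t-s,x,z)|\,dx$ is independent of $z$, so the double integral in $x$ and $z$ factors as $\big(\int_{\RR^d}|\nabla p(t-s,\cdot)|\big)\cdot\big(\int_{\RR^d}|u(s,z)|^{q+1}\,dz\big)$. State that explicitly and the argument is clean.
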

\begin{proof} 
Let $K = \sup_{\tilde{w} \in \RR^{d-1}} |F(\tilde{w})|$. Since $|P_t u_0|_F\le K\|u_0\|_1<\infty$, by Lemma \ref{lem:weightedineq}, we only need to show that the condition \eqref{eq:intcondition} holds. By \eqref{eq:BKWnorms},  \eqref{eq:gradp} and \eqref{eq:intp} we have
\begin{align*}
&\int_0^{t}\int_{\RR^d}\int_{\RR^d}  |\nabla p(t-s,w,z)||u(s,z)|^{1+q} |F(\tilde  w)| dw\,dz\,ds \\
&\lesssim K \int_0^{t}\|u_0\|^q_{\infty}\int_{\RR^d}\int_{\RR^d}  \frac{p(t-s,w,z)}{(t-s)^{1/\alpha}}|u(s,z)|dw\,dz\,ds \\
&=K\|u_0\|^q_{\infty} \int_0^{t}\frac{1}{(t-s)^{1/\alpha}}\int_{\RR^d}  |u(s,z)|dz\,ds = K\|u_0\|^q_{\infty}\|u_0\|_1 \frac{\alpha t^{1-1/\alpha}}{\alpha-1}<\infty, 
\end{align*}
which ends the proof.   
\end{proof}
If in Corollary \ref{lem:mcu0} we take $F(\tilde{w}) =p^{(d-1)}(t_0,\tilde{x},\tilde{w})$ with $t_0>0$ and $x \in \RR^d$, by \eqref{eq:pest} we get
\begin{align}\label{eq:mcu0}
\int_{\RR^d} p^{(d-1)}(t_0,\tilde{x},\tilde{w}) u(t,w)\, dw = \int_{\RR^d} p^{(d-1)}(t_0,\tilde{x},\tilde{w}) P_tu_0(w)\, dw, \qquad t>0.
\end{align}

The following lemma will be frequently exploit in the sequel. It also partially explains the introduction of the assumption \eqref{ass:A}.
\begin{lemma}\label{lem:aux1} Suppose  $u$ is a solution to \eqref{eq:problem} with   $u_0\in L^1(\RR^d)\cap L^{\infty}(\RR^d)$. 
Then, for $x\in\RR^d$, $t>0$ and $r\in(0,1)$  we have
\begin{align}\label{aux1}
\int_{\RR^d} p(1-r^\alpha,x,r w) \left|u^*(r^\alpha t,w)\right|\,dw\lesssim \frac{ r^{\beta-d}}{(1-r)^{1/\alpha}}\,t^{(\beta-1)/\alpha} \int_{\RR^{d}} p^{(d-1)}(t,t^{1/\alpha}\tilde x,\tilde y) |u_0( y)| d y.
\end{align}
\end{lemma}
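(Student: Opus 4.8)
The goal is to bound $\int_{\RR^d} p(1-r^\alpha,x,rw)\,|u^*(r^\alpha t,w)|\,dw$ by the stated right-hand side. First I would undo the rescaling: recalling $u^*(s,w)=s^{\beta/\alpha}u(s,s^{1/\alpha}w)$ from \eqref{def:star}, and using \eqref{eq:|u|<baru} to replace $|u|$ by $\overline u$ (the solution with initial condition $|u_0|\ge0$, which is nonnegative), the left-hand side becomes
\begin{align*}
(r^\alpha t)^{\beta/\alpha}\int_{\RR^d} p(1-r^\alpha,x,rw)\,\overline u\big(r^\alpha t,(r^\alpha t)^{1/\alpha}w\big)\,dw
= (r^\alpha t)^{\beta/\alpha} r^{-d}\int_{\RR^d} p(1-r^\alpha,x,r^{-1}z')\,\overline u(r^\alpha t,r t^{1/\alpha}z')\,dz'
\end{align*}
after the substitution $z'=rw$; I will keep track of which form is cleanest but the point is to get a plain integral of $p$ against $\overline u(r^\alpha t,\cdot)$.

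Next, the key move is to apply the integral conservation law in the form \eqref{eq:mcu0}: for the solution $\overline u$ with initial data $|u_0|\in L^1\cap L^\infty$, and for any fixed $t_0>0$, $\int_{\RR^d} p^{(d-1)}(t_0,\tilde a,\tilde w)\,\overline u(s,w)\,dw = \int_{\RR^d} p^{(d-1)}(t_0,\tilde a,\tilde w)\,P_s|u_0|(w)\,dw$. To use this I need to dominate the one-dimensional-in-$x_1$ kernel $p(1-r^\alpha,x,rw)$ (as a function of $w$) by a constant times $p^{(d-1)}$ of its transverse components, which is exactly the content of \eqref{eq:p<p(d-1)}: $p(s,a,b)\lesssim s^{-1/\alpha} p^{(d-1)}(s,\tilde a,\tilde b)$. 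Applying this with the appropriate time ($1-r^\alpha$, up to the scaling in the first argument) produces a factor like $(1-r^\alpha)^{-1/\alpha}\approx(1-r)^{-1/\alpha}$ together with $p^{(d-1)}$ acting only on transverse variables — so the conservation law applies and replaces $\overline u(r^\alpha t,\cdot)$ by $P_{r^\alpha t}|u_0|$. Then $P_{r^\alpha t}|u_0|(z) = \int p(r^\alpha t,z,y)|u_0(y)|\,dy$, and I push the $p^{(d-1)}$ kernel through this, using the semigroup/Chapman–Kolmogorov property for $p^{(d-1)}$ (the transverse marginal of $p$, cf. \eqref{eq:intp}) to collapse $p^{(d-1)}(\cdot)\ast_{transverse} p^{(d-1)}(r^\alpha t,\cdot)$ into a single $p^{(d-1)}$ at the combined time. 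Matching powers of $r$ and $t$ from the rescaling — the $(r^\alpha t)^{\beta/\alpha}$, the $r^{-d}$, and the $r$'s hidden in the rescaled spatial arguments — should give the claimed $r^{\beta-d}(1-r)^{-1/\alpha} t^{(\beta-1)/\alpha}$ prefactor, with the remaining integral being $\int p^{(d-1)}(t,t^{1/\alpha}\tilde x,\tilde y)|u_0(y)|\,dy$ up to adjusting the time argument in $p^{(d-1)}$ by the scaling relation \eqref{eq:pscaling} (note $p^{(d-1)}(1,\cdot)$ versus $p^{(d-1)}(t,t^{1/\alpha}\cdot,\cdot)$ differ by a power of $t$ that must be accounted for).

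The main obstacle I anticipate is the bookkeeping of scalings: reconciling the rescaled variable $w$ (living at "time $1$" scale) with the un-rescaled solution $\overline u$ (at time $r^\alpha t$) and the initial condition (whose natural kernel in \eqref{ass:A} involves $p^{(d-1)}(t,\tilde x,\tilde y)$, not $p^{(d-1)}(1,\cdot)$), so that all the powers of $r$, $t$, and $(1-r)$ land exactly as stated. A secondary point requiring care: the conservation law \eqref{eq:mcu0} is stated for a \emph{bounded} transverse weight $F$, and $p^{(d-1)}(t_0,\tilde a,\cdot)$ is bounded for each fixed $t_0>0$, so this is fine, but one must make sure that after the domination step the weight really is of this form (a transverse kernel at a strictly positive time, namely $1-r^\alpha>0$ since $r<1$) rather than something that could degenerate. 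Once those are checked, the estimate \eqref{eq:pest} for the final $p^{(d-1)}$ and \eqref{eq:pscaling} close the argument.
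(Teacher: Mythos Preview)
Your proposal is correct and follows essentially the same route as the paper: reduce to nonnegative data via \eqref{eq:|u|<baru}, dominate $p$ by $(1-r^\alpha)^{-1/\alpha}p^{(d-1)}$ via \eqref{eq:p<p(d-1)}, invoke the conservation law \eqref{eq:mcu0} to swap $\overline u(r^\alpha t,\cdot)$ for $P_{r^\alpha t}|u_0|$, integrate out $w_1$ with \eqref{eq:intp}, and collapse the two transverse kernels by the semigroup property together with the scaling \eqref{eq:pscaling}. The only slip is in your displayed substitution (with $z'=rw$ one gets $p(1-r^\alpha,x,z')$ and $\overline u(r^\alpha t,t^{1/\alpha}z')$, not the expressions you wrote), but you already flag the scaling bookkeeping as the point requiring care, and once done correctly the powers of $r$, $t$, and $(1-r)$ fall out exactly as claimed.
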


\begin{proof}
In view of \eqref{eq:|u|<baru}, it is enough to consider $u_0\geq0$. In that case, $u\geq0$ and  by \eqref{eq:p<p(d-1)}, \eqref{eq:pscaling} and \eqref{eq:mcu0} we get 
    \begin{align*}
 &\int_{\RR^d} p(1-r^\alpha,x,r w) u^*(r^\alpha t,w)\,dw\\
 &\lesssim  (1-r^\alpha)^{-1/\alpha} \int_{\RR^d} p^{(d-1)}(1-r^\alpha,\tilde x,r\tilde  w) \({r^\alpha t}\)^{\beta/\alpha} u(r^\alpha t,rt^{1/\alpha}w)\,dw\\
 &=(1-r^\alpha)^{-1/\alpha} (r^\alpha t)^{(\beta-d)/\alpha}t^{d/\alpha}\int_{\RR^d} p^{(d-1)}(t-tr^\alpha,t^{1/\alpha} \tilde x,\tilde  w)  u(r^\alpha t,w)\,dw.\\
 &\lesssim   (1-r)^{-1/\alpha}\int_{\RR^d} p^{(d-1)}(1-r^\alpha,\tilde x,r\tilde  w)\({r^\alpha t}\)^{\beta/\alpha}P_{r^\alpha t}u_0(rt^{1/\alpha}w)\,dw\\
  &=   (1-r)^{-1/\alpha}\int_{\RR^d} p^{(d-1)}(1-r^\alpha,\tilde x,r\tilde w)\int_{\RR^d}\({r^\alpha t}\)^{\beta/\alpha}p(r^\alpha t,r t^{1/\alpha}w,y) u_0(y)dy\,d w\\
    &=  (1-r)^{-1/\alpha} \int_{\RR^{d-1}}p^{(d-1)}(1-r^\alpha,\tilde x,r\tilde  w)\int_{\RR^{d}}\({r^\alpha t}\)^{(\beta-1)/\alpha}p^{(d-1)}(r^\alpha t,r t^{1/\alpha}\tilde w,\tilde y) u_0(y)dy\,d \tilde w,
    \end{align*}
    where, to obtain the last equality, we used \eqref{eq:intp} and performed the integration with respect to $w_1$. 
    Finally,  using the Tonelli theorem, \eqref{eq:pscaling}, and the semi-group properties of $p$, we arrive at
       \begin{align*}
     &\int_{\RR^d} p(1-r^\alpha,x,r w) u^*(r^\alpha t,w)\,dw\\
    &\lesssim  \frac{r^{\beta-2d+1} t^{(\beta-d)/\alpha}}{(1-r)^{1/\alpha}} \int_{\RR^{d}}u_0(y)\int_{\RR^{d-1}}p^{(d-1)}(r^{-\alpha}-1,r^{-1}\tilde x,\tilde  w)p^{(d-1)}(1,\tilde w,(r t^{1/\alpha})^{-1}\tilde y)\,d\tilde w\, dy\\
&=\frac{r^{\beta-2d+1} t^{(\beta-d)/\alpha}}{(1-r)^{1/\alpha}} \int_{\RR^{d}}u_0(y)p^{(d-1)}(r^{-\alpha},r^{-1}\tilde x,(r t^{1/\alpha})^{-1}\tilde y)\,d y\\
&=\frac{{r^{\beta-d} }t^{(\beta-1)/\alpha}}{(1-r)^{1/\alpha}} \int_{\RR^{d}}u_0(y)p^{(d-1)}(t,t^{1/\alpha}\tilde x,\tilde y)\,d y,
    \end{align*}
    which ends the proof.
\end{proof}
\subsection{Proof of Theorem \ref{thm:existence}}
\begin{theorem}\label{thm:tildeunorm} 
Suppose  $u$ is a solution to the problem \eqref{eq:problem}. If the assumption \eqref{ass:A} is satisfied and 
$$\sup_{t>0,\,x\in\RR^d} |u^*(t,x)|<\infty,$$ 
then  there exists a constant $C>0$ depending only on $\alpha,  d$ such that
$$\sup_{t>0,\,x\in\RR^d} |u^*(t,x)| \leq 1\vee \big(C(\mu+1)(|b|+1)\big)^{\frac{\beta}{(\alpha-1)(\beta-1)}}.$$ 
\end{theorem}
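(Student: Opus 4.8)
The plan is to bootstrap from the rescaled Duhamel formula \eqref{eq:duhrescalled} using the a priori finiteness of $N := \sup_{t>0,\,x} |u^*(t,x)|$ together with the key estimate \eqref{aux1} from Lemma~\ref{lem:aux1}. First I would take absolute values in \eqref{eq:duhrescalled} and bound the free term: since $u_0$ satisfies \eqref{ass:A}, the quantity $|P_t^* u_0(x)| = t^{\beta/\alpha}|P_tu_0(t^{1/\alpha}x)| = t^{\beta/\alpha}\int p(t,t^{1/\alpha}x,y)|u_0(y)|dy$ is controlled by $t^{1/\alpha}\cdot t^{(\beta-1)/\alpha}\int p^{(d-1)}(t,t^{1/\alpha}\tilde x,\tilde y)|u_0(y)|dy \cdot (\text{const})$ via \eqref{eq:p<p(d-1)}, and the latter is $\lesssim \mu$ by assumption \eqref{ass:A} (after the rescaling $t\mapsto t$, $\tilde x \mapsto t^{1/\alpha}\tilde x$, which leaves the supremum over all $t,x$ invariant). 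So $\sup_{t,x}|P_t^*u_0(x)| \lesssim \mu$.

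Next I would handle the nonlinear term. Writing $\fp{u^*(r^\alpha t,w)}{q+1} = |u^*(r^\alpha t,w)|^q \cdot u^*(r^\alpha t,w)$ and using $|u^*(r^\alpha t,w)|^q \le N^q$, I pull out $N^q$ and $|b|$ and are left with estimating
\[
\alpha \int_0^1 \int_{\RR^d} r^{d-\beta}\, |\nabla_x p(1-r^\alpha,x,rw)|\, |u^*(r^\alpha t,w)|\, dw\, dr.
\]
By \eqref{eq:gradp}, $|\nabla_x p(1-r^\alpha,x,rw)| \lesssim (1-r^\alpha)^{-1/\alpha} p(1-r^\alpha,x,rw) \lesssim (1-r)^{-1/\alpha} p(1-r^\alpha,x,rw)$ (using $(1-r^\alpha)^{1/\alpha} \approx (1-r)^{1/\alpha}$ on $(0,1)$). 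Then Lemma~\ref{lem:aux1} applies directly: the inner integral $\int p(1-r^\alpha,x,rw)|u^*(r^\alpha t,w)|dw$ is $\lesssim r^{\beta-d}(1-r)^{-1/\alpha} t^{(\beta-1)/\alpha}\int p^{(d-1)}(t,t^{1/\alpha}\tilde x,\tilde y)|u_0(y)|dy \lesssim r^{\beta-d}(1-r)^{-1/\alpha}\mu$, again invoking \eqref{ass:A}. The powers of $r$ cancel ($r^{d-\beta}\cdot r^{\beta-d}=1$), and I am left with the convergent integral $\int_0^1 (1-r)^{-2/\alpha}dr$ — this converges precisely because $\alpha \in (1,2)$ gives $2/\alpha < 2$, but wait, I need $2/\alpha<1$, i.e.\ $\alpha>2$, which fails; so the right bookkeeping must use only one factor $(1-r)^{-1/\alpha}$ from \eqref{aux1} and absorb the gradient differently, or more carefully track that \eqref{aux1} already incorporates the gradient-free kernel so only one $(1-r)^{-1/\alpha}$ appears — I would re-derive combining the gradient bound with Lemma~\ref{lem:aux1} so that the total singularity at $r=1$ is $(1-r)^{-2/\alpha}$, which is integrable iff $\alpha>... $. \emph{This is the delicate point and the main obstacle}: getting the exponent of $(1-r)$ strictly below $1$. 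I expect the resolution is that one applies \eqref{eq:gradp} to produce $(1-r^\alpha)^{-1/\alpha}$ and Lemma~\ref{lem:aux1}'s bound already contains $(1-r)^{-1/\alpha}$, giving $(1-r)^{-2/\alpha}$; since $\alpha\in(1,2)$ this is $\le(1-r)^{-1-\epsilon}$ only when $\alpha<2$ does not suffice, so the paper must instead split the $r$-integral near $r=1$ and use that $u^*(r^\alpha t,\cdot)$ for $r$ near $1$ is controlled by $N$ at a comparable time — i.e.\ one likely does \emph{not} use Lemma~\ref{lem:aux1} near $r=1$ but rather the trivial bound $|u^*|\le N$ plus $\int_{\RR^d} r^{d-\beta}|\nabla_x p(1-r^\alpha,x,rw)|dw \approx (1-r^\alpha)^{-1/\alpha}$, giving $\int^1 (1-r)^{-1/\alpha}dr<\infty$ since $\alpha>1$. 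I would combine these two regimes (Lemma~\ref{lem:aux1} for $r$ bounded away from $1$, trivial $L^\infty$ bound for $r$ near $1$).

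Assembling the pieces yields an inequality of the shape
\[
N \;\le\; C(\mu+1) \;+\; C(|b|+1)\, N^{q+1}\qquad\text{(roughly)},
\]
with $q+1 = 1 + (\alpha-1)/\beta$, so the nonlinear term is superlinear in $N$ with exponent $q+1>1$. Finally I would run the standard \emph{a priori bound from superlinear feedback} argument: if $N \le A + B N^{q+1}$ with $A,B>0$ and we already know $N<\infty$, then either $N\le 1$ or, dividing, $N^{-q}(N - A) \le B$... more robustly, the function $\phi(s) = A + Bs^{q+1} - s$ has $\phi(0)=A>0$ and, for $B$ not too large relative to $A$, a first positive root; since $N$ is finite and $\phi(N)\ge 0$ we conclude $N$ lies below the first root, which one estimates explicitly. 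The clean way here, matching the stated bound, is: from $N \le A + BN^{q+1}$ with (say) $q$ replaced suitably, if $N>1$ then $N \le A + BN^{q+1} \le (A+B)N^{q+1}$... this gives $N^{1-(q+1)} = N^{-q} \le A+B$, hence $N \le (A+B)^{1/q}$ — but one must be careful that the feedback is genuinely through $N^{q+1}$ and not $N$ alone, which it is since $|u^*|^q \le N^q$ was the only place $N$ entered the nonlinear term, and the remaining $|u^*(r^\alpha t,w)|$ factor got absorbed into Lemma~\ref{lem:aux1}/the $L^\infty$ bound producing another power of $N$ only in the near-$r=1$ regime — I would arrange the split so the total nonlinear contribution is $\le C(|b|+1)(\mu+1)^{?} N^{q}$ times $N$, i.e.\ $C(|b|+1)(\mu+1)N^{q+1}$ is replaced by the correct mixed bound $C(|b|+1)\mu^{\,q/(q+1)}\cdots$; tracking the exact exponents gives $N \le 1 \vee (C(\mu+1)(|b|+1))^{\beta/((\alpha-1)(\beta-1))}$ since $\beta/((\alpha-1)(\beta-1))$ is exactly the reciprocal exponent one obtains after solving $N^{-q} \lesssim$ (coefficient), using $q=(\alpha-1)/\beta$ and the $(\beta-1)$ factor coming from the $t^{(\beta-1)/\alpha}$ weight in \eqref{aux1}. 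Throughout, the only subtlety requiring care is the integrability in $r$ near $1$ and the precise power-counting that produces the stated exponent.
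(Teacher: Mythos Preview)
Your outline has the right ingredients up to the point where you recognize the obstacle: the gradient bound contributes $(1-r)^{-1/\alpha}$ and Lemma~\ref{lem:aux1} contributes another $(1-r)^{-1/\alpha}$, so the combined singularity $(1-r)^{-2/\alpha}$ is \emph{not} integrable on $(0,1)$ since $\alpha<2$. Your proposed fix --- split the $r$-integral at a fixed $\varepsilon$ and use the trivial bound $|u^*|^{q+1}\le N^{q+1}$ near $r=1$ --- does not close. With a fixed cut, the near-$1$ piece gives
\[
|b|\,N^{q+1}\int_{1-\varepsilon}^1 (1-r)^{-1/\alpha}\,dr \;=\; c_\varepsilon\,|b|\,N^{q+1},
\]
a genuinely \emph{superlinear} term in $N$ with a constant $c_\varepsilon$ you cannot make small. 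The resulting inequality $N\le A+BN^{q+1}$ gives no upper bound on $N$: your algebra at the end is inverted, since $N\le (A+B)N^{q+1}$ yields $N^{-q}\le A+B$, i.e.\ $N\ge (A+B)^{-1/q}$, a lower bound.

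The missing idea, and what the paper does, is to make the split point depend on $\kappa:=N$ itself: cut at $r=1-\tfrac{1}{2\kappa^\alpha}$. Then on $[1-\tfrac{1}{2\kappa^\alpha},1]$ the trivial bound gives
\[
\kappa^{q+1}\int_{1-1/(2\kappa^\alpha)}^1 (1-r)^{-1/\alpha}\,dr \;\approx\; \kappa^{q+1}\,\kappa^{-(\alpha-1)} \;=\; \kappa^{\,1-(\alpha-1)(\beta-1)/\beta},
\]
while on $[0,1-\tfrac{1}{2\kappa^\alpha}]$ one pulls out only $\kappa^{q}$, applies Lemma~\ref{lem:aux1}, and integrates the (now divergent) $(1-r)^{-2/\alpha}$ up to the cut:
\[
\mu\,\kappa^{q}\int_0^{1-1/(2\kappa^\alpha)} (1-r)^{-2/\alpha}\,dr \;\approx\; \mu\,\kappa^{q}\,\kappa^{\,2-\alpha} \;=\; \mu\,\kappa^{\,1-(\alpha-1)(\beta-1)/\beta}.
\]
Both pieces land on the \emph{same sublinear} power $\kappa^{1-(\alpha-1)(\beta-1)/\beta}$, so that $\kappa \lesssim (\mu+1)(|b|+1)\,\kappa^{1-(\alpha-1)(\beta-1)/\beta}$, which one can divide through to obtain the stated exponent $\beta/((\alpha-1)(\beta-1))$. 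This $\kappa$-dependent splitting is precisely where the factor $(\beta-1)$ in the final exponent comes from; it cannot be recovered from your scheme.
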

\begin{proof}
Denote $\kappa = \sup\limits_{t>0,\,x\in\RR^d} |u^*(t,x)|$ and assume $\kappa> 1$. By \eqref{eq:duhrescalled}, we have
\begin{align*}
&| u^*(t,x)| \leq \|P^*_t|u_0|\|_\infty\\
& +C|b|\(\int_0^{1-1/2\kappa^\alpha}+\int_{1-1/2\kappa^\alpha}^1\) 
\int_{\RR^d} \frac{r^{d-\beta} }{(1-r^\alpha)^{1/\alpha}}p(1-r,x,r w) | u^*(r^\alpha t,w)|^{(\alpha-1+\beta)/\beta}\,dw\,dr\\
&=:\|P^*_t|u_0|\|_\infty+C|b|\(I_1+I_2\),
\end{align*}
for some $C=C(\alpha, d)$. First, by the inequality \eqref{eq:p<p(d-1)} and the assumption \eqref{ass:A} we have
\begin{align*}
P^*_t|u_0|(x)=t^{\beta/\alpha}\int_{\RR^d}p(t,t^{1/\alpha}x,y)|u_0(y)|dy\lesssim t^{(\beta-1)/\alpha}\int_{\RR^d}p^{(d-1)}(t,t^{1/\alpha}\tilde x,\tilde y)|u_0(y)|dy\leq \mu.
\end{align*}
Let us pass to estimating the integral $I_2$:
\begin{align*}
I_2&\leq \kappa^{(\alpha-1+\beta)/\beta}\int_{1-1/2\kappa^\alpha}^1 r^{d-\beta}(1-r)^{-1/\alpha} \int_{\RR^d}  p(1-r,x,r w)\,dw\,dr\\
&= \kappa^{(\alpha-1+\beta)/\beta}\int_{1-1/2\kappa^\alpha}^1 r^{-\beta}(1-r)^{-1/\alpha} dr.
\end{align*}
Then, since $\kappa> 1$, we have $1-1/2\kappa^\alpha>1/2$, and consequently
\begin{align*}
I_2&\leq \kappa^{(\alpha-1+\beta)/\beta}2^\beta\int_{1-1/2\kappa^\alpha}^1 (1-r)^{-1/\alpha}dr\\
&= \kappa^{(\alpha-1+\beta)/\beta}2^\beta\frac{\alpha}{\alpha-1} \(\frac1{2\kappa^\alpha}\)^{1-1/\alpha} 
= \kappa^{1-(\beta-1)\frac{(\alpha-1)}{\beta}}\,2^{\beta-1+1/\alpha}\frac{\alpha }{\alpha-1} .
\end{align*}
Concerning the integral $I_1$, we have
\begin{align}\label{eq:aux2}
I_1\leq \kappa^{\frac{\alpha-1}{\beta}} \int_0^{1-1/2\kappa^\alpha}r^{d-\beta} (1-r)^{-1/\alpha}\int_{\RR^d} p(1-r,x,r w) |u^*(r^\alpha t,w)|\,dw\,dr.
\end{align}
Applying \eqref{aux1} to \eqref{eq:aux2} and taking advantage of the assumption \eqref{ass:A}, we eventually obtain
\begin{align*}
I_1\lesssim \mu\kappa^{\frac{\alpha-1}{\beta}} \int_0^{1-1/2\kappa^\alpha}(1-r)^{-2/\alpha}\,dr=\mu \, 2^{2/\alpha-1}\kappa^{1-(\beta-1)\frac{(\alpha-1)}{\beta}},
\end{align*}
which leads to 
$$\kappa\lesssim \mu+|b|\(1+\mu\)\(\kappa^{1-(\beta-1)\frac{(\alpha-1)}{\beta}}\).$$
Dividing both sides by $\kappa^{1-(\beta-1)\frac{(\alpha-1)}{\beta}}$ and keeping in mind the assumptions $\alpha, \beta>1$ and $\kappa>1$, we obtain
$$ \kappa^{(\alpha-1)(\beta-1)/\beta}\lesssim \mu+|b|\(1+\mu\)\leq  (1+|b|)(\mu+1),$$
which implies  the bound in the assertion of the theorem.
\end{proof}
Next we provide an improved upper bound, which depends on the space arguments as well.

\begin{prop}\label{thm:genest} Under the assumptions of Theorem \ref{thm:tildeunorm}, we have
\begin{align}\label{eq:genest}
	|u(t,x)|\leq C\,t^{-1/\alpha}  \int_{\RR^d}     |u_0( y)| p^{(d-1)}(t, \tilde x, \tilde y) d y
\end{align}
for some $C=C(\alpha,\beta,d, |b|, \mu)$.
\end{prop}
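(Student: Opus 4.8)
The plan is to bootstrap from the bound on $\|u^*\|_\infty$ obtained in Theorem~\ref{thm:tildeunorm} into the sharper, $\tilde x$-dependent estimate \eqref{eq:genest} by re-running the Duhamel representation \eqref{eq:duhrescalled}, but this time estimating the nonlinear term using Lemma~\ref{lem:aux1} rather than crudely bounding $|u^*|$ by $\kappa$. Concretely, writing $\kappa = \sup_{t>0,x}|u^*(t,x)|<\infty$ (finite, by Theorem~\ref{thm:tildeunorm}) and denoting
\[
\Phi(t,\tilde x) := t^{(\beta-1)/\alpha}\int_{\RR^d} p^{(d-1)}(t,t^{1/\alpha}\tilde x,\tilde y)\,|u_0(y)|\,dy,
\]
which by \eqref{ass:A} satisfies $\Phi(t,\tilde x)\le \mu$ uniformly, I would start from
\[
|u^*(t,x)| \le P_t^*|u_0|(x) + C|b|\int_0^1 \frac{r^{d-\beta}}{(1-r^\alpha)^{1/\alpha}}\int_{\RR^d} p(1-r^\alpha,x,rw)\,|u^*(r^\alpha t,w)|^{(\alpha-1+\beta)/\beta}\,dw\,dr.
\]

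For the first term, \eqref{eq:p<p(d-1)} gives $P_t^*|u_0|(x)\lesssim \Phi(t,\tilde x)$ directly. For the integral term, the idea is to peel off one power: bound $|u^*(r^\alpha t,w)|^{(\alpha-1+\beta)/\beta} = |u^*(r^\alpha t,w)|^{(\alpha-1)/\beta}\,|u^*(r^\alpha t,w)| \le \kappa^{(\alpha-1)/\beta}\,|u^*(r^\alpha t,w)|$, and then apply Lemma~\ref{lem:aux1} to the remaining integral $\int_{\RR^d} p(1-r^\alpha,x,rw)|u^*(r^\alpha t,w)|\,dw$, which is exactly of the form bounded there by $r^{\beta-d}(1-r)^{-1/\alpha}\Phi(t,\tilde x)$. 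This yields
\[
|u^*(t,x)| \lesssim \Phi(t,\tilde x) + |b|\,\kappa^{(\alpha-1)/\beta}\,\Phi(t,\tilde x)\int_0^1 (1-r)^{-2/\alpha}\,dr,
\]
and since $\alpha\in(1,2)$ forces $2/\alpha>1$, the $r$-integral near $r=1$ diverges — so a single split of $[0,1]$ is needed, handling $r$ close to $1$ by the crude bound $|u^*|\le\kappa$ as in the $I_2$ estimate of Theorem~\ref{thm:tildeunorm} (which only costs $\kappa^{1-(\beta-1)(\alpha-1)/\beta}$, a constant here, times $\int (1-r)^{-1/\alpha}dr$, now convergent), and handling $r\in[0,1-\delta]$ by Lemma~\ref{lem:aux1} with the integrable tail $\int_0^{1-\delta}(1-r)^{-2/\alpha}dr$. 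Taking $\delta$ a fixed constant (say $1/2$), both pieces are bounded by a constant (depending on $\alpha,\beta,d,|b|,\kappa$, hence ultimately on $\mu$ via Theorem~\ref{thm:tildeunorm}) times $\Phi(t,\tilde x)$. Thus $|u^*(t,x)|\le C\,\Phi(t,\tilde x)$, and unwinding the definitions $u^*(t,x)=t^{\beta/\alpha}u(t,t^{1/\alpha}x)$ and $\Phi$ via a change of variables $x\mapsto t^{-1/\alpha}x$, $\tilde y$ absorbing the scaling in $p^{(d-1)}$ through \eqref{eq:pscaling}, gives precisely \eqref{eq:genest}.

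The main obstacle is purely bookkeeping rather than conceptual: one must be careful that the argument of $p^{(d-1)}$ in Lemma~\ref{lem:aux1} is $t^{1/\alpha}\tilde x$, not $\tilde x$, so when translating back from $u^*$ to $u$ the scaling \eqref{eq:pscaling} in dimension $d-1$ must be applied to convert $t^{(\beta-1)/\alpha}p^{(d-1)}(t,t^{1/\alpha}\tilde x,\tilde y)$ into the form $t^{-1/\alpha}\int|u_0(y)|p^{(d-1)}(t,\tilde x,\tilde y)\,dy$ after the substitution — a short computation, but the one place a factor of $t$ can go astray. A secondary point worth checking is that Lemma~\ref{lem:aux1} was stated for solutions with $u_0\in L^1\cap L^\infty$; since Proposition~\ref{thm:genest} is invoked (per Theorem~\ref{thm:tildeunorm}'s hypotheses) for such approximating solutions anyway, this causes no trouble, and the constant $C$ is uniform in the approximation because it depends only on $\alpha,\beta,d,|b|,\mu$.
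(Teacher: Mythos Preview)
There is a genuine gap in your treatment of the range $r\in[1-\delta,1]$. Using the crude bound $|u^*|\le\kappa$ there yields
\[
\int_{1-\delta}^1 \frac{r^{d-\beta}}{(1-r^\alpha)^{1/\alpha}}\int_{\RR^d} p(1-r^\alpha,x,rw)\,\kappa^{(\alpha-1+\beta)/\beta}\,dw\,dr
= \kappa^{(\alpha-1+\beta)/\beta}\int_{1-\delta}^1 \frac{r^{-\beta}}{(1-r^\alpha)^{1/\alpha}}\,dr,
\]
which is a \emph{constant independent of $(t,x)$}, not a constant times $\Phi(t,\tilde x)$. Since $\Phi(t,\tilde x)$ can be arbitrarily small (it must decay in $\tilde x$ for the conclusion \eqref{eq:genest} to have content beyond Theorem~\ref{thm:tildeunorm}), you cannot dominate this constant by $C\,\Phi(t,\tilde x)$. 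Your claim that ``both pieces are bounded by a constant \dots\ times $\Phi(t,\tilde x)$'' therefore fails for the piece near $r=1$, and the argument as written only recovers the uniform bound $|u^*|\lesssim 1$, not the sharper $\tilde x$-dependent estimate.

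The paper's proof addresses exactly this difficulty by a bootstrap on the near-$r=1$ piece. Writing $h_\ve$ for the integral over $[1-\ve,1]$ and $H_\ve$ for the integral over $[0,1-\ve]$ (both with a single power of $|u^*|$ inside), one re-inserts the Duhamel bound $|u^*|\le P_t^*|u_0|+c\kappa^q(H_\ve+h_\ve)$ \emph{inside} $h_\ve$. The resulting double integrals are collapsed using the semigroup property of $p$: the $P_t^*|u_0|$ contribution gives back $P_t^*|u_0|(x)$, and the $H_\ve+h_\ve$ contribution, after the substitution $s=v/r$ and Fubini, yields $c_\ve H_\ve(t,x)+c\,\ve^{(\alpha-1)/\alpha}h_\ve(t,x)$. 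For $\ve$ small enough the $h_\ve$ term is absorbed on the left, giving $h_\ve\lesssim P_t^*|u_0|+H_\ve$, and then Lemma~\ref{lem:aux1} handles $H_\ve$ as you described. The missing idea is thus this self-referential estimate for $h_\ve$ with a small coefficient; a single application of $|u^*|\le\kappa$ is too lossy.
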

\begin{proof} 
As in the proof of the previous theorem, we denote $\kappa = \sup\limits_{t>0,\,x\in\RR^d} |u^*(t,x)$|. Let $0 < \ve < 1/2$  and put
\begin{align*}
H_\ve(t,x) &= \int_0^{1-\varepsilon} (1-r^\alpha)^{-1/\alpha} r^{d-\beta} \int_{\RR^d}   p(1-r^\alpha,x,r w)    |u^*(r^\alpha t,w)|\,dw\,dr,\\
h_\ve(t,x) &= \int_{1-\varepsilon}^1 (1-r^\alpha)^{-1/\alpha} r^{d-\beta} \int_{\RR^d}   p(1-r^\alpha,x,r w)    |u^*(r^\alpha t,w)|\,dw\,dr.
\end{align*}
We also let $H(t,x) = H_\ve(t,x) + h_\ve(t,x)$. By Theorem \ref{thm:tildeunorm} and the Duhamel formula \eqref{eq:duhrescalled}, we have
\begin{align}\label{eq1:u<}
|u^*(t,x)| \le P_t^* |u_0|(x) + c\kappa^q H(t,x). 
\end{align}
By \eqref{eq:p<p(d-1)},
\begin{align*}
P^*_t|u_0|(x)&=t^{\beta/\alpha}\int_{\RR^d}|u_0(y)|p(t,t^{1/\alpha}x,y)dy\lesssim  t^{(\beta-1)/\alpha}\int_{\RR^d}|u_0(y)|p^{(d-1)}(t,t^{1/\alpha}\tilde x,\tilde y)dy.
\end{align*}
Next, applying \eqref{aux1} we get 
\begin{align*}
H_\varepsilon(t,x)&\lesssim \int_0^{1-\varepsilon}(1-r)^{-2/\alpha} t^{(\beta-1)/\alpha} \int_{\RR^{d}}|u_0( y)| p^{(d-1)}(t,t^{1/\alpha}\tilde x,\tilde y) d y\,dr\\
&=\frac{(2-\alpha)}{\alpha}\varepsilon^{1-2/\alpha}\, t^{(\beta-1)/\alpha} \int_{\RR^{d}}|u_0( y)| p^{(d-1)}(t,t^{1/\alpha}\tilde x,\tilde y) \,dy.
\end{align*}
Hence, it is enough to show that for some $\ve>0$,
\begin{align}\label{eq0:genest}
	h_\ve(t,x) \leq C \left(P_t^*|u_0|(x) +  H_\ve(t,x)\right)
\end{align}
holds {with} some $C=C(\alpha,\beta,d, |b|, \mu,\ve)$. By virtue of \eqref{eq:p<p(d-1)} {and \eqref{eq1:u<}},
\begin{align*}
h_\ve(t,x) &\le \int_{1-\varepsilon}^1 (1-r^\alpha)^{-1/\alpha} r^{d-\beta} \int_{\RR^d}   p(1-r^\alpha,x,r w)  \Big(P_{r^\alpha t}^* |u_0|(w) + c\kappa^q H(r^\alpha t,w)\Big)\,dw\,dr\\ 
&=: I_1 + c\kappa^qI_2.
\end{align*}
Now, by the Tonelli theorem, \eqref{eq:pscaling} and the semigroup  properties of $p$,
\begin{align*}
I_1 &= \int_{1-\varepsilon}^1 (1-r^\alpha)^{-1/\alpha} r^{d-\beta} \int_{\RR^d}   p(1-r^\alpha,x,r w)  P_{r^\alpha t}^* |u_0(w)| \,dw\,dr \\
&= \int_{1-\varepsilon}^1 (1-r^\alpha)^{-1/\alpha} r^{d}t^{\beta/\alpha} \int_{\RR^d}   p(1-r^\alpha,x,r w)  \int_{\RR^d}p(r^\alpha t,rt^{1/\alpha}w,y)|u_0(y)|\,dy \,dw\,dr \\ 
&= \int_{1-\varepsilon}^1 (1-r^\alpha)^{-1/\alpha} r^{-d}t^{(\beta-d)/\alpha} \int_{\RR^d} |u_0(y)| \int_{\RR^d} p(r^{-\alpha}-1,r^{-1}x, w) p(1,w,(rt^{1/\alpha})^{-1}y)\,dw \,dy\,dr \\ 
&= \int_{1-\varepsilon}^1 (1-r^\alpha)^{-1/\alpha} r^{-d}t^{(\beta-d)/\alpha} \int_{\RR^d} |u_0(y)|  p(r^{-\alpha},r^{-1}x, (rt^{1/\alpha})^{-1}y) \,dy\,dr \\ 
&= \int_{1-\varepsilon}^1 (1-r^\alpha)^{-1/\alpha} t^{\beta/\alpha} \int_{\RR^d}   p(t,t^{1/\alpha}x,y)  |u_0(y)|\,dy\,dr \le \frac{\alpha \ve^{(\alpha-1)/\alpha}}{\alpha-1}  P_t^*|u_0|(x). 
\end{align*}
Next, 
\begin{align*}
&{I_2} =\int_{1-\ve}^1 (1-r^\alpha)^{-1/\alpha} r^{d-\beta} \int_{\RR^d}   p(1-r^\alpha,x,r w)  H(r^\alpha t,w) \,dw\,dr \\
&= \int_{1-\ve}^1 (1-r^\alpha)^{-1/\alpha} r^{d-\beta} \int_{\RR^d}   p(1-r^\alpha,x,r w)\\
&\ \ \ \times \int_0^{1} (1-s^\alpha)^{-1/\alpha} s^{d-\beta} \int_{\RR^d}   p(1-s^\alpha,w,s y)    |u^*(r^\alpha s^\alpha t,w)|\,dy\,ds\,dw\,dr \\ 
&= \int_{1-\ve}^1\int_0^{1}  (1-r^\alpha)^{-1/\alpha}(1-s^\alpha)^{-1/\alpha} (rs)^{d-\beta}   \int_{\RR^d}   p(1-(rs)^\alpha,x,rs y)    |u^*((rs)^\alpha t,w)|\,dy\,ds\,dr.
\end{align*}
Substituting  $s = v/r$ leads to
\begin{align*}
{I_2} &=  \int_{1-\ve}^1\int_0^{r} \frac{(1-(v/r)^\alpha)^{-1/\alpha}}{r(1-r^\alpha)^{1/\alpha}} v^{d-\beta}   \int_{\RR^d}   p(1-v^\alpha,x,v y)    |u^*(v^\alpha t,w)|\,dy\,dr\,dv.
\end{align*}
By changing the order of integration we get
\begin{align*}
\int_{1-\ve}^1 \int_{0}^{r}\int_{\RR^d} (\ldots)\,dy\,dv\,dr&=  
\int_{0}^{1-\ve} \int_{1-\ve}^{1}\int_{\RR^d} (\ldots)\,dy\,dr\,dv\\
&+\int_{1-\ve}^{1} \int_{v}^{1}\int_{\RR^d} (\ldots)\,dy\,dr\,dv = J_1 + J_2.
\end{align*}
Since {for $v\in(0,1-\ve)$}
\begin{align*}
\int_{1-\ve}^1  \frac{(1-(v/r)^\alpha)^{-1/\alpha}}{r(1-r^\alpha)^{1/\alpha}} dr
&\leq\int_{1-\ve}^1  \frac{dr}{(1-r^\alpha)^{1/\alpha}(r^\alpha-(1-\ve)^\alpha)^{1/\alpha}}=c_\ve\leq c_\ve(1-v^{\alpha})^{-1/\alpha},
\end{align*}
we obtain
\begin{align*}
J_1 \le c_\ve H_\ve(t,x),
\end{align*}
for some constant $c_\varepsilon$ depending on $\ve$ and $\alpha$.
In order to estimate $J_2$, let us recall \cite[Lemma 4.3 ]{MR4105374}, which says that  for $v\in (1-\ve,1)$,
\begin{align*}
\int_{v}^1  \frac{(1-(v/r)^\alpha)^{-1/\alpha}}{r(1-r^\alpha)^{1/\alpha}} dr
&= \int_{v}^1 (1-r^\alpha)^{-1/\alpha}(r^\alpha-v^\alpha)^{-1/\alpha} dr \le c\ve^{(\alpha-1)/\alpha}(1-v^\alpha)^{-1/\alpha},
\end{align*}
where $c$ does not depend on $\ve$ and $v$. Hence, $J_2 \le c\ve^{(\alpha-1)/\alpha} h_\ve(t,x)$ and consequently
\begin{align*}
h_\ve(t,x) \le c(\ve) \big(P_t^*|u_0|(x) + H_\ve(t,x) \big)+ c\ve^{(\alpha-1)/\alpha}h_\ve(t,x),
\end{align*}
for some $c,c(\ve)>0$, where $c$ does not depend on $\ve$. Finally, taking sufficiently small $\ve$ we get 
$$h_\ve(t,x) \le \frac{c(\ve)}{1-c\ve^{(\alpha-1)/\alpha}}\big(P_t^*|u_0|(x) + H_\ve(t,x)\big),$$ which yields \eqref{eq0:genest}, what was to be shown.

\end{proof}

\begin{proof}[Proof of Theorem \ref {thm:existence}]

Let $u_{0}^{(k,n)} = u_0 \mathbf{1}_{B(0,k)}\mathbf1_{\{-n \le u_0\le k\}}$.  Then, the functions $u_0^{(k,n)} \in L^1 \cap L^\infty$  and,  by \eqref{eq:BKWuin}, there exists a double sequence of solutions $u^{(k,n)}(t,x)$ to the problems 
\begin{equation*}
\begin{cases}
u_t = \Delta^{\alpha/2} u + b\cdot \nabla\(u^{1+q}\), \qquad t>0, \; x \in \RR^d,\\
u(0,x)=u_0^{(k,n)}(x).
\end{cases}
\end{equation*}
By \eqref{eq:genest} and \eqref{ass:A}, each of them satisfies
\begin{align}
\label{aux14}
|u^{({k,}n)}(t,x)| \le C\,t^{-1/\alpha}  \int_{\RR^d}     |u_0( y)| p^{(d-1)}(t, \tilde x, \tilde y) d y \le C\mu  t^{-\beta/\alpha},
\end{align}
where $C>0$ does not depend on {$k$ and} $n$.  Thus, due to the monotonicity property  \eqref{eq:monotonicity}, the sequence $(u^{(k,n)})_{k\ge1}$ is non-decreasing. This ensures existence of the limit $u^{(\infty,n)}(t,x) := \lim_{k\to\infty} u^{(k,n)}(t,x)$. {Similarly, by \eqref{eq:monotonicity} $(u^{(\infty,n)})_{n\ge1}$ is decreasing, hence} there exists the limit $u(t,x) := \lim_{n\to\infty} u^{(\infty,n)}(t,x)$. We will show that this is the solution we are looking for.

In view of \eqref{aux14}, $u(t,x)$ satisfies \eqref{eq:existence}. It remains to prove that $u$ satisfies \eqref{eq:duhamel} as well. Clearly 
$
u(0,x) = 
u_0(x).   
$
Since all $u^{(k,n)}$ satisfy \eqref{eq:duhamel}, we have
\begin{align}\label{aux15}
u(t,x) = \lim_{n \to \infty} {\lim_{k \to \infty}} \left(P_t u_0^{(k,n)}(x) + \int_0^t \int_{\RR^d} b \cdot \nabla_x p(t-s,x,z) {\fp{[u^{(k,n)}(s,z)]}{1+q}} dz ds \right).
\end{align}
By \eqref{eq:p<p(d-1)},
\begin{align*}
|p(t,x,y)u_0^{(k,n)}(y)|\leq  t^{-1/\alpha}p^{(d-1)}(t,\tilde x,\tilde y)|u_0(y)|.
\end{align*}
Since the right-hand side is integrable thanks to the condition \eqref{ass:A},  the dominated convergence theorem gives us $\lim_{n \to \infty}\lim_{k \to \infty} P_t u_0^{(k,n)}(x) = P_t u_0(x)$, and therefore we only need to show that we can pass with the limit under the integral in \eqref{aux15}. By Theorem \ref{thm:tildeunorm}, $|b \cdot \nabla_x p(t-s,x,z) u^{(k,n)}(s,z)^{1+q}| \le C|b|(t-s)^{-1/\alpha} s^{-(\beta+\alpha-1)/\alpha} p(t-s,x,z)$ with $C>0$ independent of $k$ and $n$. Thus,  for every $s \in (0,t)$,
\begin{align*}
 \lim_{n \to \infty}\lim_{k \to \infty}  \int_{\RR^d} b \cdot \nabla_x p(t-s,x,z) {\fp{[u^{(k,n)}(s,z)]}{1+q}} dz = \int_{\RR^d} b \cdot \nabla_x p(t-s,x,z) {\fp{u(s,z)}{1+q}} dz.
\end{align*}
Like in the proof of Lemma \ref{lem:aux1} let $\overline u^{(k,n)}$ be the solution to \eqref{eq:problem} with the initial condition  $|u_0^{(k,n)}|$. {Clearly, $|u^{(k,n)}|\le \overline u^{(k,n)}$}. By Theorem \ref{thm:tildeunorm}, $\overline u^{(k,n)}(s,z) \le cs^{-\beta/\alpha}$ with $c$ not depending on $k$ and $n$. Hence, by \eqref{eq:gradp}, \eqref{eq:pscaling}, substituting $z={s^{1/\alpha}}w$ and then using   Lemma \ref{lem:aux1} and the assumption \eqref{ass:A},
  for $s \in (0,t/2)$ we get 
\begin{align*}
&\left|\int_{\RR^d} b \cdot \nabla_x p(t-s,x,z) {\fp{[u^{(k,n)}(s,z)]}{1+q}} dz \right| \\
&\lesssim |b| (t-s)^{-1/\alpha} s^{-(\beta+\alpha-1)/\alpha} \int_{\RR^d} p(t-s,{x},{z})  \big(\overline u^{(k,n)}\big)^*(s,s^{-1/\alpha}z) \,dz \\
&= |b| (t-s)^{-1/\alpha} s^{(d+1-\beta-\alpha)/\alpha}t^{-d/\alpha} \int_{\RR^d} p(1-(s/t),t^{-1/\alpha}{x},(s/t)^{1/\alpha}{w})  \big(\overline u^{(k,n)}\big)^*(s,w)\, dw \\
&\lesssim |b| t^{-1/\alpha} s^{(d+1-\beta-\alpha)/\alpha}t^{-d/\alpha}\frac{(s/t)^{(\beta-d)/\alpha}}{(1-(s/t)^{1/\alpha})^{1/\alpha}}t^{(\beta-1)/\alpha}\int_{\RR^d}p^{(d-1)}(t,\tilde x, \tilde y)|u_0(y)|dy\\
&\leq \frac{|b| t^{-(\beta+1)/\alpha} \mu}{(1-1/2^{1/\alpha})^{1/\alpha}} s^{-(\alpha-1)/\alpha}.
\end{align*}
On the other hand, for $s \in [t/2,t)$, by Theorem \ref{thm:tildeunorm},
\begin{align*}
\left|\int_{\RR^d} b \cdot \nabla_x p(t-s,x,z) {\fp{[u^{(k,n)}(s,z)]}{1+q}} dz \right| 
&\lesssim  |b| (t-s)^{-1/\alpha} s^{-(\beta+\alpha-1)/\alpha} \int_{\RR^d} p(t-s,x,z) dz \\
&\lesssim |b|t^{-(\beta+\alpha-1)/\alpha} (t-s)^{-1/\alpha}  .
\end{align*}
Since $\int_0^t (s^{-(\alpha-1)/\alpha} \vee (t-s)^{-1/\alpha}) ds < \infty$, we apply the {dominated} convergence theorem once again and get
\begin{align*}
 &\lim_{n \to \infty}\lim_{k \to \infty}  \int_0^t \int_{\RR^d} b \cdot \nabla_x p(t-s,x,z) {\fp{[u^{(k,n)}(s,z)]}{1+q}} dz ds \\
&= \int_0^t\int_{\RR^d} b \cdot \nabla_x p(t-s,x,z) {\fp{u(s,z)}{1+q}} dz ds,
\end{align*}
which ends the proof.
\end{proof}

\section{The self-similar solution}
This section is devoted to the proof of Theorem \ref{thm:sss}, where the initial condition $u_0(x)=M|x|^{-\beta}$, $1<\beta<d$ and $M>0$ is considered.

\subsection{Existence and selfsimilarity}

Let us start with the following  observation on how scaling of the initial condition is transferred to scaling of the solution. 
\begin{lemma}\label{prop:rescaled} If $u$ is a solution to \eqref{eq:problem} with initial condition $u_0$,
 then $v(t,x)=\lambda^{\beta/\alpha}u(\lambda t,\lambda^{1/\alpha }x)$ is a solution with the initial condition $v_0(x)=\lambda^{\beta/\alpha}u_0(\lambda^{1/\alpha }x).$
\end{lemma}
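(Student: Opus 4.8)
The plan is to verify that $v(t,x)=\lambda^{\beta/\alpha}u(\lambda t,\lambda^{1/\alpha}x)$ satisfies the Duhamel formula \eqref{eq:duhamel} with initial condition $v_0(x)=\lambda^{\beta/\alpha}u_0(\lambda^{1/\alpha}x)$, given that $u$ satisfies \eqref{eq:duhamel} with initial condition $u_0$. First I would record the trivial observation that $v(0,x)=\lambda^{\beta/\alpha}u(0,\lambda^{1/\alpha}x)=\lambda^{\beta/\alpha}u_0(\lambda^{1/\alpha}x)=v_0(x)$. Then I would check the two terms on the right-hand side of \eqref{eq:duhamel} separately. For the linear term $P_tv_0(x)$, writing it out as $\int_{\RR^d}p(t,x,z)\lambda^{\beta/\alpha}u_0(\lambda^{1/\alpha}z)\,dz$, I would substitute $z=\lambda^{-1/\alpha}\zeta$ (so $dz=\lambda^{-d/\alpha}d\zeta$) and then apply the scaling property \eqref{eq:pscaling} of $p$ to get $p(t,x,\lambda^{-1/\alpha}\zeta)=p(\lambda\cdot t,\lambda^{1/\alpha}\cdot\lambda^{-1/\alpha}x,\lambda^{1/\alpha}\cdot\lambda^{-1/\alpha}\cdot\lambda^{-1/\alpha}\zeta)\cdot\lambda^{d/\alpha}$; more carefully, \eqref{eq:pscaling} with $\lambda$ replaced by $\lambda^{1/\alpha}$ gives $p(t,x,\lambda^{-1/\alpha}\zeta)=\lambda^{d/\alpha}p(\lambda t,\lambda^{1/\alpha}x,\zeta)$. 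Collecting the factors yields $P_tv_0(x)=\lambda^{\beta/\alpha}\int_{\RR^d}p(\lambda t,\lambda^{1/\alpha}x,\zeta)u_0(\zeta)\,d\zeta=\lambda^{\beta/\alpha}P_{\lambda t}u_0(\lambda^{1/\alpha}x)$.

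Next I would handle the nonlinear Duhamel integral. Starting from
\begin{align*}
\int_0^t\int_{\RR^d}b\cdot\nabla_xp(t-s,x,z)\,\fp{v(s,z)}{q+1}\,dz\,ds,
\end{align*}
I would substitute $z=\lambda^{-1/\alpha}\zeta$ and $s=\lambda^{-1}\sigma$, note that $\fp{v(s,z)}{q+1}=\lambda^{\beta(q+1)/\alpha}\fp{u(\lambda s,\lambda^{1/\alpha}z)}{q+1}$, and use the scaled gradient estimate: differentiating \eqref{eq:pscaling} in the $x$-variable gives $\nabla_xp(t-s,x,\lambda^{-1/\alpha}\zeta)=\lambda^{(d+1)/\alpha}\nabla_xp(\lambda(t-s),\lambda^{1/\alpha}x,\zeta)$. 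The exponents then have to combine correctly: the factor $\lambda^{\beta(q+1)/\alpha}$ from the nonlinearity, $\lambda^{-d/\alpha}$ from $dz$, $\lambda^{-1}$ from $ds$, and $\lambda^{(d+1)/\alpha}$ from the gradient should multiply to $\lambda^{\beta/\alpha}$. Using $q=(\alpha-1)/\beta$, one has $\beta(q+1)=\beta+\alpha-1$, so the total power of $\lambda$ is $\tfrac{1}{\alpha}\bigl(\beta+\alpha-1-d-\alpha+d+1\bigr)=\tfrac{\beta}{\alpha}$, exactly as needed. After the change of variables the integral becomes $\lambda^{\beta/\alpha}\int_0^{\lambda t}\int_{\RR^d}b\cdot\nabla_xp(\lambda t-\sigma,\lambda^{1/\alpha}x,\zeta)\fp{u(\sigma,\zeta)}{q+1}\,d\zeta\,d\sigma$.

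Finally, adding the two pieces and using that $u$ satisfies \eqref{eq:duhamel} at the point $(\lambda t,\lambda^{1/\alpha}x)$, I would conclude that the sum equals $\lambda^{\beta/\alpha}u(\lambda t,\lambda^{1/\alpha}x)=v(t,x)$, which is exactly \eqref{eq:duhamel} for $v$. I do not expect a serious obstacle here; the only point requiring care is bookkeeping of the powers of $\lambda$ coming from the various substitutions and from \eqref{eq:pscaling} applied with exponent $1/\alpha$, together with the use of the specific relation $q=(\alpha-1)/\beta$ to make the nonlinear term's scaling exponent match. A minor technical remark worth including is that the change of variables and Fubini are justified because, for the solutions under consideration, the Duhamel integral is absolutely convergent (as established in the proof of Theorem~\ref{thm:existence}), so all manipulations are legitimate.
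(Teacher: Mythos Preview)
Your proof is correct and follows essentially the same approach as the paper: both verify the Duhamel formula for $v$ by applying the scaling property \eqref{eq:pscaling} together with the change of variables $z=\lambda^{-1/\alpha}\zeta$, $s=\lambda^{-1}\sigma$, and both rely on the identity $\beta(q+1)=\beta+\alpha-1$ to make the exponents match. The only cosmetic difference is that the paper starts from $\lambda^{\beta/\alpha}$ times the Duhamel formula for $u$ at $(\lambda t,\lambda^{1/\alpha}x)$ and transforms it into the Duhamel formula for $v$, whereas you proceed in the reverse direction.
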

\begin{proof}
By scaling property \eqref{eq:pscaling}, we have
\begin{align*}
\lambda^{\beta/\alpha}P_{\lambda t} u_0(\lambda^{1/\alpha} x) &= \lambda^{\beta/\alpha} \int_{\RR^d} p(\lambda t, \lambda^{1/\alpha} x,y) u_0(y) \,dy\\
&= \int_{\RR^d} \lambda^{d/\alpha}p(\lambda t, \lambda^{1/\alpha} x,\lambda^{1/\alpha}w) \lambda^{\beta/\alpha}u_0(\lambda^{1/\alpha} w) \,dw = P_t v_0(x)\,.
\end{align*}
Similarly,
\begin{align*}
&\lambda^{\beta/\alpha}\int_0^{\lambda t} \int_{\RR^d} b\cdot \nabla_x p(\lambda t- s, \lambda^{1/\alpha} x,y) u^{q+1}(s,y) \,dy\,ds \\
&= \int_0^{t} \int_{\RR^d} \lambda^{d/\alpha +1}b\cdot \nabla_x p(\lambda t - \lambda r, \lambda^{1/\alpha} x,\lambda^{1/\alpha} w) \lambda^{(1-\alpha)/\alpha}\lambda^{\beta(q+1)/\alpha} u^{q+1}(\lambda r,\lambda^{1/\alpha} w) \,dw\,dr \\
&= \int_0^{t} \int_{\RR^d} b\cdot \nabla_x p(t-r, x,w) v(r,w) \,dw\,dr.
\end{align*}
Hence,
\begin{align*}
v(t,x) = \lambda^{\beta/\alpha} u(\lambda t,\lambda^{1/\alpha} x) = P_t v_0(x) + \int_0^{t} \int_{\RR^d} b\cdot \nabla_x p(t-r, x,w) v(r,w) \,dw\,dr,
\end{align*}
as required.
\end{proof}
Next we verify the condition \eqref{ass:A} in the case $u_0(x)=M|x|^{-\beta}$.
\begin{lemma}\label{lem:u_0ass}
The function $u_0(x)=M|x|^{-\beta}$, $M>0$, satisfies the condition \eqref{ass:A}.
\end{lemma}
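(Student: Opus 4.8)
The plan is to verify the uniform bound in \eqref{ass:A} directly using the estimate from Lemma \ref{lem:estpa}. First I would write out what needs to be checked: for $u_0(x) = M|x|^{-\beta}$ we must show that
\begin{align*}
\sup_{t>0,\,x\in\RR^d} t^{(\beta-1)/\alpha} \int_{\RR^d} M|y|^{-\beta}\, p^{(d-1)}(t,\tilde x,\tilde y)\, dy \le \mu
\end{align*}
for some finite constant $\mu$ depending only on $d,\alpha,\beta,M$. The key point is that the integral is over $y\in\RR^d$ but the kernel $p^{(d-1)}$ only depends on $\tilde y = (y_2,\ldots,y_d)$; hence I would first integrate out the variable $y_1$. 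That is, write $|y|^{-\beta} = (y_1^2 + |\tilde y|^2)^{-\beta/2}$ and compute $\int_\RR (y_1^2 + |\tilde y|^2)^{-\beta/2}\, dy_1$, which by the substitution $y_1 = |\tilde y| s$ equals $|\tilde y|^{1-\beta} \int_\RR (1+s^2)^{-\beta/2}\, ds = c_\beta |\tilde y|^{-(\beta-1)}$, where the constant $c_\beta$ is finite precisely because $\beta > 1$.

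After this reduction the problem becomes a $(d-1)$-dimensional one: we are left to bound
\begin{align*}
t^{(\beta-1)/\alpha}\, M c_\beta \int_{\RR^{d-1}} p^{(d-1)}(t,\tilde x,\tilde y)\, |\tilde y|^{-(\beta-1)}\, d\tilde y.
\end{align*}
Here I would apply Lemma \ref{lem:estpa} with the ambient dimension taken to be $d-1$ and the exponent $\gamma = \beta-1$; note that the hypothesis $0<\gamma<d-1$ of that lemma holds since $1 < \beta < d$. This gives
\begin{align*}
\int_{\RR^{d-1}} p^{(d-1)}(t,\tilde x,\tilde y)\, |\tilde y|^{-(\beta-1)}\, d\tilde y \stackrel{\beta}{\approx} \frac{1}{(t^{1/\alpha}+|\tilde x|)^{\beta-1}} \le \frac{1}{t^{(\beta-1)/\alpha}}.
\end{align*}
Multiplying by $t^{(\beta-1)/\alpha}$ the two powers of $t$ cancel, and the supremum over $t>0$ and $x\in\RR^d$ is bounded by a finite constant $\mu = \mu(d,\alpha,\beta,M)$, which is exactly \eqref{ass:A}.

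I do not expect any serious obstacle here; the only points requiring a moment of care are (i) checking that the condition $\beta>1$ is what makes the one-dimensional integral $\int_\RR (1+s^2)^{-\beta/2}\, ds$ converge, and (ii) checking that $\beta - 1 < d-1$ so that Lemma \ref{lem:estpa} applies in dimension $d-1$ — both follow immediately from the standing assumption $1<\beta<d$. One should also note that Lemma \ref{lem:estpa} is stated for the $d$-dimensional kernel $p$, but as remarked at the start of Section 2.2 its proof (via \cite{MR3933622} and scaling) is valid in every dimension $m\ge1$, so applying it with $m=d-1$ is legitimate. Assembling these observations yields the claim.
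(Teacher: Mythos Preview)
Your proof is correct and follows essentially the same approach as the paper: integrate out $y_1$ to reduce to a $(d-1)$-dimensional integral of $|\tilde y|^{1-\beta}$ against $p^{(d-1)}$, then apply Lemma~\ref{lem:estpa} in dimension $d-1$. The only cosmetic difference is that the paper bounds $\int_\RR |y|^{-\beta}\,dy_1$ via the crude inequality $|y|\ge\tfrac12(|\tilde y|+|y_1|)$ rather than computing it exactly by the substitution $y_1=|\tilde y|s$; your version is if anything a little cleaner and more explicit about the hypotheses $\beta>1$ and $\beta-1<d-1$.
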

\begin{proof}
Since
\begin{align}\label{eq:inty^beta}
\int_\RR |y|^{-\beta} dy_1 \leq \int_\RR [\tfrac12(|\tilde y|+|y_1|)]^{-\beta} dy_1 =\frac{2^{\beta+1}}{\beta-1}|\tilde y|^{1-\beta},
\end{align} 
Lemma \ref{lem:estpa} gives us
\begin{align}\label{eq:Uest}
\int_{\RR^{d}} u_0(y) p^{(d-1)}(t, \tilde x, \tilde y)  d\tilde{y} \lesssim   \int_{\RR^{d-1}} |\tilde{y}|^{1-\beta} p^{(d-1)}(t, \tilde x, \tilde y)  d\tilde{y} \lesssim (t^{1/\alpha}+|\tilde{x}|)^{1-\beta},
\end{align}
which yields \eqref{ass:A}.
\end{proof}
Therefore, by Theorem \ref{thm:existence} there exists a solution $u(t,x)$ to the initial problem \eqref{eq:problem} with $u_0(x) = M|x|^{-\beta}$, $M>0$.
From Lemma \ref{prop:rescaled} we know that its rescaled version $\lambda^{\beta}u(\lambda^\alpha t,\lambda x)$, $\lambda>0$, is also a solution to the same problem. Nevertheless, these may be two different solutions. In  Lemma  \ref{lem:selfsimilar} we show that the solution from Theorem \ref{thm:existence} is self-similar i.e. coincides with its rescaled versions. 

Throughout the whole section we exploit an analogous  notation to the one from the proof of Theorem \ref{thm:existence}. Namely,  $u^{(n)}$, $n\geq1$, is defined as  the solution to the problem \eqref{eq:problem} with

\begin{align}\label{eq:approxu0}
	u_{0}^{(n)} (x)= \(n\wedge\frac M{|x|^\beta}\)\mathbf{1}_{B(0,n)}(x),
\end{align}
   and $u(t,x):=\lim_{n\nearrow\infty}u^{(n)}(t,x)$, $x\in\RR^d$. The initial condition is positive in this case, so there is no need to consider the double sequence $u_0^{(k,n)}$.

\begin{lemma}\label{lem:selfsimilar}
For any $\lambda>0$ we have
 \begin{align}\label{eq:selsimilar}
	 u(t,x)=\lambda^{\beta}u(\lambda^\alpha t,\lambda x),\ \ \ \ \ \ t>0, x\in\RR^d.
 \end{align}
\end{lemma}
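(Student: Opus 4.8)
The plan is to exploit the uniqueness-free construction of $u$ as a monotone limit of the approximating solutions $u^{(n)}$ and to track how the rescaling $u(t,x)\mapsto\lambda^\beta u(\lambda^\alpha t,\lambda x)$ interacts with this approximation. The crucial point is that the initial condition $u_0(x)=M|x|^{-\beta}$ is itself scale-invariant: $\lambda^\beta u_0(\lambda x)=\lambda^\beta M\lambda^{-\beta}|x|^{-\beta}=u_0(x)$. So $v(t,x):=\lambda^\beta u(\lambda^\alpha t,\lambda x)$ is, by Lemma \ref{prop:rescaled} (applied with $\lambda^\alpha$ in place of $\lambda$), a solution to \eqref{eq:problem} with the \emph{same} initial condition $u_0$. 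The only thing preventing us from concluding immediately is that we have not proved uniqueness of solutions in the class \eqref{eq:existence}; hence $v$ and $u$ could a priori differ. The remedy is to show that $v$ is obtained from \emph{the same} monotone approximation scheme, so it must equal the pointwise limit $u$.

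First I would fix $\lambda>0$ and look at the approximating initial data $u_0^{(n)}$ from \eqref{eq:approxu0}. Rescaling: $\lambda^\beta u_0^{(n)}(\lambda x)=\bigl(\lambda^\beta n\wedge M|x|^{-\beta}\bigr)\mathbf 1_{B(0,n/\lambda)}(x)$. This is \emph{not} exactly $u_0^{(m)}$ for any $m$, but it is sandwiched between two members of the family: for $\lambda\le 1$ one has $u_0^{(\lfloor n/\lambda\rfloor\wedge\lfloor\lambda^\beta n\rfloor)}\le \lambda^\beta u_0^{(n)}(\lambda\cdot)\le u_0^{(\lceil n/\lambda\rceil\vee\lceil\lambda^\beta n\rceil)}$, and similarly (with the roles adjusted) for $\lambda>1$. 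More simply: $\lambda^\beta u_0^{(n)}(\lambda\cdot)\nearrow u_0$ pointwise as $n\to\infty$ (monotonicity in $n$ needs a short check, but in any case one can pass to the subsequence that makes it monotone, or use that any two sequences increasing to $u_0$ give the same limiting solution). By Lemma \ref{prop:rescaled}, $\lambda^\beta u^{(n)}(\lambda^\alpha t,\lambda x)$ is the solution with initial data $\lambda^\beta u_0^{(n)}(\lambda\cdot)$; by the monotonicity property \eqref{eq:monotonicity} for $L^1\cap L^\infty$ initial data, the pointwise limit of these solutions as $n\to\infty$ depends only on the limiting initial datum $u_0$, and equals $u$. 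Therefore $\lim_{n}\lambda^\beta u^{(n)}(\lambda^\alpha t,\lambda x)=u(t,x)$, while the same limit equals $\lambda^\beta u(\lambda^\alpha t,\lambda x)$ by definition of $u$ as $\lim_n u^{(n)}$. This yields \eqref{eq:selsimilar}.

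The cleanest way to organize the monotonicity argument, avoiding any fuss about whether $\lambda^\beta u_0^{(n)}(\lambda\cdot)$ is monotone in $n$, is the following comparison: for each $n$ choose $m_-(n)\le m_+(n)$ with $u_0^{(m_-(n))}\le\lambda^\beta u_0^{(n)}(\lambda\cdot)\le u_0^{(m_+(n))}$ and $m_\pm(n)\to\infty$; this is possible because $\lambda^\beta u_0^{(n)}(\lambda\cdot)$ is a truncation of $u_0$ that exhausts $u_0$ as $n\to\infty$. Then \eqref{eq:monotonicity} gives $u^{(m_-(n))}(t,x)\le\lambda^\beta u^{(n)}(\lambda^\alpha t,\lambda x)\le u^{(m_+(n))}(t,x)$, and letting $n\to\infty$ both outer terms converge to $u(t,x)$, forcing the middle term to converge to $u(t,x)$ as well. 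Since the middle term also equals $\lambda^\beta u(\lambda^\alpha t,\lambda x)$ in the limit, we are done.

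I expect the main obstacle to be purely bookkeeping: verifying that the truncated-and-rescaled data $\lambda^\beta u_0^{(n)}(\lambda\cdot)$ are indeed squeezed between two genuine members $u_0^{(m_\pm(n))}$ of the family with $m_\pm(n)\to\infty$, and checking that \eqref{eq:monotonicity}, which is stated for $L^1\cap L^\infty$ data, applies here (it does, since all the $u_0^{(n)}$ and their rescalings are bounded and compactly supported, hence in $L^1\cap L^\infty$). No new analytic estimate is needed beyond Lemma \ref{prop:rescaled}, the monotonicity \eqref{eq:monotonicity}, and the definition of $u$ as the monotone limit; the scale-invariance of $|x|^{-\beta}$ does all the real work.
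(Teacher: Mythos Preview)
Your proposal is correct and follows essentially the same route as the paper: sandwich the rescaled truncations $\lambda^\beta u_0^{(n)}(\lambda\cdot)$ between two members $u_0^{(m_-(n))}$ and $u_0^{(m_+(n))}$ of the approximating family, invoke the monotonicity property \eqref{eq:monotonicity}, and pass to the limit. The paper streamlines the bookkeeping by first reducing to $\lambda>1$ and then giving the explicit indices $m_-(n)=\lfloor n/\lambda\rfloor$, $m_+(n)=\lceil\lambda^\beta n\rceil$, but this is exactly the concrete realization of your $m_\pm(n)$.
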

\begin{proof}
It suffices to consider $\lambda>1$. Indeed, if we substitute $t=s/\lambda^\alpha$ and $x=y/\lambda$ in \eqref{eq:selsimilar}, we get the result for $\lambda<1$. We note that
$$\lambda^{\beta}u(\lambda^\alpha t,\lambda x)=\lim_{n\to\infty}\lambda^{\beta}u^{(n)}(\lambda^\alpha t,\lambda x),$$
where, by Lemma \ref{prop:rescaled}, $\lambda^\beta u^{(n)}(\lambda^\alpha t,\lambda x)$ is the solution to the problem \eqref{eq:problem} with the rescaled initial condition $\lambda^{\beta}u^{(n)}_0(\lambda x)=\(\lambda ^{\beta}n\wedge\frac M{|x|^\beta}\)\mathbf{1}_{B(0,n/\lambda)}$.
Observe that for $i_n=\lfloor n/\lambda\rfloor$ and $j_n=\lceil \lambda^\beta n\rceil$, 
$$u^{(i_n)}_0(x)\leq \lambda^\beta u^{(n)}_0(\lambda x)\leq u^{(j_n)}_0(x),\ \ \ \ \ x\in\RR^d.$$
Here  $\lfloor \cdot\rfloor$ and $\lceil \cdot \rceil$ are the standard floor and ceiling functions, respectively. Hence, by the   monotonicity property \eqref{eq:monotonicity}, we get 
$$u^{(i_n)}(t,x)\leq \lambda^{\beta}u^{(n)}(\lambda^\alpha t,\lambda x)\leq u^{(j_n)}(t,x), \qquad t>0,\, x\in \RR^d,$$
which  yields $\lim_{n\to\infty}\lambda^{\beta}u^{(n)}(\lambda^\alpha t,\lambda x)=u(t,x)$, as required.
\end{proof}
In order to study the self-similar solution  from Lemma \ref{lem:selfsimilar}, it is clearly enough to study the case $t=1$. Namely, denoting $U(x):=u(1,x)$ we get
\begin{align}\label{eq:u=U}
u(t,x)=t^{-\beta/\alpha}U(t^{-1/\alpha}x).
\end{align}
In particular, this implies $u^*(t,x)=U(x)$, and the Duhamel formula \eqref{eq:duhrescalled} takes  the form
\begin{equation}\label{eq:duh1}
U(x) = M\int_{\RR^d}\frac{p(1,x,y)}{|y|^\beta}dy+\alpha \int_0^{1} \int_{\RR^d} r^{d-\beta} \nabla_xp(1-r^\alpha,x,r w) U(w)^{q+1}\,dw\,dr. 
\end{equation}

\subsection{Pointwise estimates and asymptotics}
From Proposition \ref{thm:genest} and the estimate \eqref{eq:Uest} we have the following upper bound 
\begin{align}\label{eq:u1est1}
U(x)\lesssim \frac 1{(1+|\tilde x|)^{\beta-1}}.
\end{align} 
However, it is not optimal. In particular, the right-hand side does not depend on $x_1$. In the sequel, we provide precise  two-sided estimates. We start with  $L^p$ bounds.

\begin{lemma}\label{prop:Lpest}
For any $\gamma\in(d/\beta,\infty]$, $U \in L^\gamma(\RR^d)$.
\end{lemma}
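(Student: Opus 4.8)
The plan is to prove that $U\in L^\gamma(\RR^d)$ for $\gamma\in(d/\beta,\infty]$ by combining the sharp two-sided estimate for $P_1u_0$ coming from Lemma \ref{lem:estpa} with the recursive Duhamel formula \eqref{eq:duh1}, bootstrapping the integrability of the nonlinear term. First I would handle the linear part: since $u_0(x)=M|x|^{-\beta}$ with $0<\beta<d$, Lemma \ref{lem:estpa} (applied with $r=0$, $t=1$) gives $P_1u_0(x)=M\int_{\RR^d}p(1,x,y)|y|^{-\beta}\,dy\approx (1+|x|)^{-\beta}$, which lies in $L^\gamma(\RR^d)$ precisely when $\gamma\beta>d$, i.e. $\gamma>d/\beta$ (and trivially for $\gamma=\infty$). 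So it remains to show the nonlinear term
\[
G(x):=\alpha\int_0^1\int_{\RR^d}r^{d-\beta}\,\bigl|\nabla_xp(1-r^\alpha,x,rw)\bigr|\,|U(w)|^{q+1}\,dw\,dr
\]
belongs to $L^\gamma$ as well. By Theorem \ref{thm:tildeunorm} (or directly the already-established bound \eqref{eq:u1est1}) we know $U$ is bounded, hence $|U(w)|^{q+1}\le \kappa^q|U(w)|$; and by Proposition \ref{thm:genest} together with \eqref{eq:Uest} we have the pointwise control $|U(w)|\lesssim(1+|\tilde w|)^{1-\beta}$.

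The key step is then to estimate the $L^\gamma$ norm of $G$. Using \eqref{eq:gradp} to write $|\nabla_xp(1-r^\alpha,x,rw)|\lesssim (1-r^\alpha)^{-1/\alpha}p(1-r^\alpha,x,rw)$, changing variables and using Minkowski's integral inequality in $L^\gamma$, I would bound
\[
\|G\|_\gamma\lesssim \kappa^q\int_0^1\frac{r^{d-\beta}}{(1-r^\alpha)^{1/\alpha}}\Bigl\|\int_{\RR^d}p(1-r^\alpha,\cdot,rw)\,|U(w)|\,dw\Bigr\|_\gamma\,dr.
\]
For the inner $L^\gamma$ norm one uses that $P_t$ is a contraction-type operator: after the substitution $z=rw$ and rescaling via \eqref{eq:pscaling}, the inner integral is (a rescaling of) $P_{(1-r^\alpha)}$ applied to a rescaled copy of $|U|$, and Young's inequality $\|P_t f\|_\gamma\le\|p(t,\cdot)\|_1\|f\|_\gamma=\|f\|_\gamma$ reduces matters to $\|U\|_\gamma$ itself — but that is circular. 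To avoid circularity I would instead feed in the already-known bound $|U(w)|\lesssim (1+|\tilde w|)^{1-\beta}$, reducing the inner norm to an explicit convolution estimate: by Lemma \ref{lem:estpa} applied in dimension $d-1$ (integrating out $w_1$ with \eqref{eq:intp} as in the proof of Lemma \ref{lem:aux1}), $\int_{\RR^d}p(1-r^\alpha,x,rw)(1+|\tilde w|)^{1-\beta}\,dw$ is comparable to $r^{\beta-d}$ times something like $(1-r^\alpha)^{-1/\alpha}\bigl(\text{a function of }\tilde x\text{ of the form }(c+|\tilde x|)^{1-\beta}\bigr)$, whose $L^\gamma(\RR^d)$ norm is infinite because it does not decay in $x_1$ — so this naive route also fails, and a more careful argument keeping the $x_1$-dependence is needed.

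The honest approach, and the one I would actually carry out, is to use Lemma \ref{lem:aux1} directly. Lemma \ref{lem:aux1} with $t=1$ gives $\int_{\RR^d}p(1-r^\alpha,x,rw)|u^*(r^\alpha,w)|\,dw=\int_{\RR^d}p(1-r^\alpha,x,rw)|U(w)|\,dw\lesssim \frac{r^{\beta-d}}{(1-r)^{1/\alpha}}\int_{\RR^d}|u_0(y)|p^{(d-1)}(1,\tilde x,\tilde y)\,dy\lesssim \frac{r^{\beta-d}}{(1-r)^{1/\alpha}}(1+|\tilde x|)^{1-\beta}$, using \eqref{eq:Uest} in the last step. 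Again this bound does not decay in $x_1$, so to get $L^\gamma$ integrability I must exploit the $p(1-r^\alpha,x,rw)$ factor's decay in $x_1$ \emph{before} integrating out. Concretely I would split $|\nabla_xp(1-r^\alpha,x,rw)|\lesssim (1-r^\alpha)^{-1/\alpha}p(1-r^\alpha,x,rw)$ and estimate $p$ in dimension $d$ by the product bound coming from \eqref{eq:pest}, keeping a genuine factor of $(1-r^\alpha)^{1/\alpha}/((1-r^\alpha)^{1/\alpha}+|x_1-rw_1|)^{1+\alpha}$ in the first coordinate; then $\|G(\cdot)\|_{L^\gamma(\RR^d)}$ factors (via Minkowski) as an $L^\gamma(\RR)$-norm in $x_1$ of a convolution of an integrable kernel (whose $L^1(\RR)$ norm is $\lesssim 1$) against a function depending only on $\tilde x$ times the $L^\gamma(\RR^{d-1})$ norm of $(1+|\tilde x|)^{1-\beta}$ after integrating $w_1$ — the latter being finite iff $\gamma(\beta-1)>d-1$. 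The main obstacle — and the crux of the whole lemma — is exactly reconciling the exponents: one must check that $\gamma>d/\beta$ indeed implies both $\gamma\beta>d$ (for the linear term) and the analogous condition for the nonlinear term, and that the $r$-integral $\int_0^1 r^{d-\beta}(1-r^\alpha)^{-2/\alpha}\,dr$ (or its variant) converges, which requires $\alpha\in(1,2)$ and $\beta>1$; these are precisely the standing assumptions, so the bookkeeping closes. Finally, the $\gamma=\infty$ case is just \eqref{eq:u1est1} refined to $U\lesssim(1+|x|)^{-\beta}$, which is anyway subsumed once \eqref{eq:sss2} is proven, but here follows already from boundedness of $P_1u_0$ and of the nonlinear term via Theorem \ref{thm:tildeunorm}.
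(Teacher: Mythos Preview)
Your proposal has a genuine gap in the ``bookkeeping closes'' step. You arrive at the condition $\gamma(\beta-1)>d-1$ for the $L^\gamma(\RR^{d-1})$-integrability of $(1+|\tilde x|)^{1-\beta}$ and then assert that $\gamma>d/\beta$ implies it; but since $\beta<d$ one has $(d-1)/(\beta-1)>d/\beta$, so there is a whole interval $\gamma\in\bigl(d/\beta,(d-1)/(\beta-1)\bigr]$ on which your argument simply does not apply. Moreover, the $r$-integral $\int_0^1 r^{d-\beta}(1-r^\alpha)^{-2/\alpha}\,dr$ that you claim converges actually diverges at $r=1$, because $2/\alpha>1$ for $\alpha<2$; so even in the range $\gamma>(d-1)/(\beta-1)$ the estimate as written does not close. (The ``product bound'' you invoke is also delicate: the stable kernel does not factor as $p^{(1)}\cdot p^{(d-1)}$, and the crude splitting $(t^{1/\alpha}+|x|)^{d+\alpha}\ge(t^{1/\alpha}+|x_1|)^{1+\alpha}(t^{1/\alpha}+|\tilde x|)^{d-1}$ leaves you with a $(d-1)$-dimensional kernel that is only borderline integrable.)

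The paper's proof is organized quite differently and sidesteps both problems. For $\gamma>(d-1)/(\beta-1)$ it does not try to estimate the nonlinear Duhamel term in $L^\gamma$ at all. Instead it writes $\int U^\gamma\,dx=\lim_n\int [u^{(n)}(1,x)]^\gamma\,dx$, bounds all but one factor by $(1+|\tilde x|)^{-(\beta-1)(\gamma-1)}$ via \eqref{eq:u1est1}, and then invokes the integral conservation law (Corollary~\ref{lem:mcu0}) to replace the remaining factor $u^{(n)}(1,x)$ by $P_1u_0^{(n)}(x)$, which \emph{does} decay in $x_1$. This is the key idea you are missing: the conservation law is what manufactures the $x_1$-decay that \eqref{eq:u1est1} cannot provide. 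For the residual range $\gamma\in(d/\beta,(d-1)/(\beta-1))$ the paper bootstraps: Minkowski on \eqref{eq:duh1} gives $\|U\|_\gamma\lesssim 1+\|U\|_{\gamma(1+q)}^{1+q}$, with a \emph{convergent} $r$-integral $\int_0^1 r^{-\beta+d/\gamma}(1-r)^{-1/\alpha}\,dr$ (only one power of $(1-r)^{-1/\alpha}$ appears here), and iterating pushes the exponent past $(d-1)/(\beta-1)$ in finitely many steps.
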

\begin{proof} {For $\gamma=\infty$ the result follows by \eqref{eq:u1est1}}. Furthermore, by monotonicity of $u^{(n)}$, \eqref{eq:u1est1},  Corollary \ref{lem:mcu0} and Lemma \ref{lem:estpa}, for $\gamma\in\big((d-1)/(\beta-1),\infty\big)$ we have
\begin{align}\nonumber
\int_{\RR^d}U^\gamma(x)dx&=\lim_{n\rightarrow\infty}\int_{\RR^d}[u^{(n)}(1,x)]^\gamma dx\\\nonumber
&\lesssim \lim_{n\rightarrow\infty}\int_{\RR^d}\frac 1{(1+|\tilde x|)^{(\beta-1)(\gamma-1)}}u^{(n)}(1,x)\,dx\\\nonumber
&= \lim_{n\rightarrow\infty}\int_{\RR^d}\frac 1{(1+|\tilde x|)^{(\beta-1)(\gamma-1)}}P_1(u_0^{(n)})(x)\,dx\\\nonumber
&\lesssim \int_{\RR^d}\frac 1{(1+|\tilde x|)^{(\beta-1)(\gamma-1)}}\frac{1}{(1+|\tilde x|+|x_1|)^{\beta}}\,dx\\\label{eq0:Lpest2}
&=\frac{2}{\beta-1} \int_{\RR^{d-1}}\frac 1{(1+|\tilde x|)^{(\beta-1)\gamma}}\,d \tilde{x}<\infty.
\end{align}
Now, let $\gamma\in(\frac{d}{\beta},\frac{d-1}{\beta-1})$. By the integral Minkowski inequality, we have
\begin{align}\nonumber
\|U\|_{\gamma}&\lesssim \|P_1u_0\|_{\gamma} \\\nonumber
&\ \ \ +\(\int_{\RR^d}\(\int_0^{1} \int_{\RR^d} \frac{r^{d-\beta}}{ (1-r)^{1/\alpha}}p(1-r,rw)[ U(w+\frac{x}{r})]^{1+q}\,dw\,dr\)^{\gamma}dx\)^{1/\gamma}\\\nonumber
&\leq  c_{\gamma}+\|U^{1+q}\|_{\gamma}\int_0^{1} \int_{\RR^d} \frac{r^{d-\beta+d/\gamma}}{ (1-r)^{1/\alpha}}p(1-r,rw)\,dw\,dr\\\label{aux3}
&=  c_{\gamma}+\|U\|_{\gamma(1+q)}^{1+q}\int_0^{1}  r^{-\beta+d/\gamma} (1-r)^{-1/\alpha}\,dr \le c_0\left(1+\|U\|_{\gamma(1+q)}^{1+q}\right).
\end{align}
Thus, for every $m \in \NN$ such that $\gamma(1+q)^m > (d-1)/(\beta-1)$, by iterating \eqref{aux3} we get 
\begin{align}\label{eq:Lpest}
\|U\|_{\gamma}&\le  c_m\left(1+\|U\|_{\gamma(1+q)^{m+1}}^{(1+q)^{m+1}}\right). 
\end{align}
We take the smallest $m\in\NN$ such that  that $\gamma(1+q)^{m+1} > (d-1)/(\beta-1)$. Then, by \eqref{eq:Lpest} and \eqref{eq0:Lpest2}, $U \in L^\gamma(\RR^d)$.

\end{proof}
The next step is to show that $U$ vanishes at infinity.

\begin{lemma}\label{lem:U->0}
We have
 $$\lim_{R\rightarrow\infty}\sup_{|x|>R}U(x)=0.$$
\end{lemma}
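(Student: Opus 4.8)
The plan is to derive the decay of $U$ at infinity from the Duhamel formula \eqref{eq:duh1} together with the $L^\gamma$ bounds established in Lemma \ref{prop:Lpest}. First I would observe that, since $U$ is a fixed point of \eqref{eq:duh1}, it suffices to show that each of the two terms on the right-hand side tends to $0$ uniformly as $|x|\to\infty$. The first term, $M\int_{\RR^d} p(1,x,y)|y|^{-\beta}\,dy$, is comparable to $(1+|x|)^{-\beta}$ by Lemma \ref{lem:estpa} (with $\gamma=\beta<d$), hence vanishes at infinity. So the work is concentrated in the nonlinear term
\[
V(x):=\alpha\int_0^1\int_{\RR^d} r^{d-\beta}\,\nabla_x p(1-r^\alpha,x,rw)\,U(w)^{q+1}\,dw\,dr.
\]

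The key estimate is to bound $|V(x)|$ by convolving the gradient kernel against $U^{q+1}$. Using \eqref{eq:gradp} we have $|\nabla_x p(1-r^\alpha,x,rw)|\lesssim (1-r)^{-1/\alpha} p(1-r^\alpha,x,rw)$, so after the substitution $rw=z$ (absorbing the factor $r^{-d}$ against $r^{d-\beta}$) we get
\[
|V(x)|\lesssim \int_0^1 \frac{r^{-\beta}}{(1-r)^{1/\alpha}}\int_{\RR^d} p(1-r^\alpha,x,z)\,\bigl|U(z/r)\bigr|^{q+1}\,dz\,dr.
\]
Now I would split the inner spatial integral over $\{|z|>|x|/2\}$ and $\{|z|\le|x|/2\}$. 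On the far region, since $U\in L^{\gamma}$ for every $\gamma>d/\beta$ (Lemma \ref{prop:Lpest}), and $q+1=(\alpha-1+\beta)/\beta$, one can pick $\gamma$ with $\gamma(q+1)>d/\beta$ so that $\int_{\{|z|>|x|/2\}}|U(z/r)|^{q+1}\,dz\le r^d\|U\mathbf 1_{\{|\cdot|>|x|/2r\}}\|_{q+1}^{q+1}\to 0$ as $|x|\to\infty$ (tail of an $L^{q+1}$-type norm — here one uses Lemma \ref{prop:Lpest} to know $U\in L^{q+1}$, which holds since $q+1>d/\beta$ is equivalent to $\alpha-1>0$), uniformly in $r$ bounded away from $1$; near $r=1$ the weight $(1-r)^{-1/\alpha}$ is integrable after using $\int p(1-r^\alpha,x,z)\,dz=1$ and the crude bound $U\in L^\infty$ from \eqref{eq:u1est1}. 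On the near region $\{|z|\le|x|/2\}$ we have $|x-z|\gtrsim|x|$, so $p(1-r^\alpha,x,z)\lesssim (1-r^\alpha)/|x|^{d+\alpha}\le 1/|x|^{d+\alpha}$, and the remaining integral $\int_0^1 r^{-\beta}(1-r)^{-1/\alpha}\int_{\RR^d}|U(z/r)|^{q+1}\,dz\,dr\lesssim \|U\|_{q+1}^{q+1}$ is finite, giving a bound of order $|x|^{-(d+\alpha)}$ on the near region.

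The main obstacle I anticipate is controlling the behaviour near $r=1$, where $(1-r^\alpha)^{-1/\alpha}$ is only borderline integrable and the heat kernel $p(1-r^\alpha,x,\cdot)$ concentrates near $x$; one must avoid losing integrability when combining this singularity with the tail estimate of $U^{q+1}$. The clean way around it is to handle the range $r\in(1/2,1)$ separately using only the $L^1$-mass $\int p\,dz=1$ and the $L^\infty$ bound \eqref{eq:u1est1} on $U$, which contributes a term that is $O(1)$ in $x$ but can be made small by first restricting to $r\in(1-\delta,1)$ and then... — more precisely, on this range bound $|U(z/r)|^{q+1}\lesssim \|U\|_\infty^{q}|U(z/r)|$ and use the integral conservation law idea to show the contribution decays; alternatively, since $U\lesssim(1+|\tilde x|)^{-(\beta-1)}$, a direct estimate suffices. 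In any case the heart of the argument is: \emph{tails of $L^p$ functions are small}, promoted through the Duhamel kernel to a uniform statement in $x$. Assembling the far-region smallness, the near-region $O(|x|^{-(d+\alpha)})$ bound, and the decay of $P_1u_0$, we conclude $\sup_{|x|>R}U(x)\to 0$ as $R\to\infty$.
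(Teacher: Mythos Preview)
Your overall strategy --- start from the Duhamel formula \eqref{eq:duh1}, split the nonlinear term into near and far spatial regions, and exploit that tails of $L^\gamma$ functions vanish --- is exactly the paper's. But there is a concrete arithmetic error that undercuts the key step.

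You assert that $U\in L^{q+1}(\RR^d)$ because ``$q+1>d/\beta$ is equivalent to $\alpha-1>0$''. This is false: since $q+1=(\alpha-1+\beta)/\beta$, the condition $q+1>d/\beta$ reads $\alpha-1+\beta>d$, i.e.\ $\alpha>d-\beta+1$. As $1<\beta<d$ and $\alpha<2$, this fails whenever $d-\beta\ge 1$ (e.g.\ $d=3$, $\beta\in(1,2)$, any $\alpha\in(1,2)$). In that range Lemma~\ref{prop:Lpest} does \emph{not} yield $U\in L^{q+1}$, so neither the far-region tail $\int_{\{|w|>|x|/(2r)\}}|U(w)|^{q+1}\,dw\to 0$ nor the near-region bound $\int_0^1 r^{-\beta}(1-r)^{-1/\alpha}\int|U(z/r)|^{q+1}\,dz\,dr\lesssim\|U\|_{q+1}^{q+1}$ is available; both pillars of your estimate collapse. (You briefly mention ``pick $\gamma$ with $\gamma(q+1)>d/\beta$'', which is the right instinct, but you never actually use such a $\gamma$.)

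The paper fixes precisely this by inserting H\"older's inequality on the far region: with $\gamma=2d/(\alpha-1)$ one has $\gamma(q+1)>d/\beta$ always, so $U\in L^{\gamma(q+1)}$ by Lemma~\ref{prop:Lpest} and $\|\mathbf1_{\{|\cdot|>|x|/2\}}U\|_{\gamma(q+1)}\to0$; meanwhile $\|p(1-r^\alpha,\cdot)\|_{\gamma'}\approx(1-r)^{-(\alpha-1)/(2\alpha)}$, which together with $(1-r)^{-1/\alpha}$ stays integrable near $r=1$. The paper also uses the sharper gradient bound $|\nabla p|\lesssim p/\big((1-r^\alpha)^{1/\alpha}+|x-rw|\big)$ (giving a clean $1/|x|$ on the near region) and handles small $r$ separately via Lemma~\ref{lem:aux1}; your small-$\delta$ cutoff near $r=1$ plays an analogous role at the other endpoint, but the indispensable missing ingredient is the H\"older step with the correct exponent.
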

\begin{proof} 
Estimates \eqref{eq:pest} and  Lemma \ref{lem:estpa} applied to \eqref{eq:duh1} give us
\begin{align}\label{eq:DuhamelEst}
U(x) &\lesssim \frac{M}{(1+|x|)^\beta}+\int_0^{1} \int_{\RR^d} {r^{d-\beta}}\frac{p(1-r,x,r w)}{{ (1-r)^{1/\alpha}}+|x-rw|} U(w)^{q+1}\,dw\,dr.
\end{align} 
Let us split the integral above into
$I_1+I_2:=\int_0^{\varepsilon}(\ldots)dr +\int_{\varepsilon}^1(\ldots)dr,$
for some $\varepsilon\in(0,1/2)$. First, note that
\begin{align*}
\int_{\RR} |y|^{-\beta} dy_1 = \int_{\RR} (y_1^2 + |\tilde{y}|^2)^{-\beta/2} dy = c |\tilde{y}|^{1-\beta}.
\end{align*}
Hence, by \eqref{aux1}, \eqref{eq:inty^beta} and Lemma \ref{lem:estpa} we have
\begin{align*}
I_1 &\lesssim \|U\|_\infty^{q}\int_0^\varepsilon (1-r)^{-2/\alpha} \int_{\RR^{d}} p^{(d-1)}(1,\tilde x,\tilde y) |y|^{-\beta} d y \\
&\lesssim \ve\|U\|_\infty^{q} \int_{\RR^{d-1}} p^{(d-1)}(1,\tilde x,\tilde y) |\tilde{y}|^{1-\beta} d \tilde{y} \lesssim \varepsilon \|U\|_\infty^{q}. 
\end{align*}
Next, using H\"older inequality and the bound \eqref{eq:Lpp} we get
\begin{align*}
I_2&\lesssim\int_\varepsilon^1{r^{d-\beta}}\int_{B(0,|x|/2)} \frac{p(1-r,x,r w)}{|x|} \|U\|_\infty^{q+1}\,dw\,dr\\
&\ \ \ +\int_\varepsilon^1\frac{{r^{d-\beta}}}{(1-r)^{1/\alpha}}\int_{B(0,|x|/2)^c} p(1-r,x,r w) U(w)\|U\|_\infty^{q}\,dw\,dr\\
&\leq\frac{\|U\|_\infty^{q+1}}{|x|}\int_\varepsilon^1{r^{-\beta}}dr\\
&\ \ \ +\|U\|_\infty^{q}\int_\varepsilon^1\frac{{r^{d-\beta}}}{(1-r)^{1/\alpha}} \|p(1-r,x,r (\cdot))\|_{2d/(2d-\alpha+1)} \|\mathbf1_{\{B(0,|x|/2)^c\}}U\|_{2d/(\alpha-1)}\,dr\\
&\lesssim\frac{\|U\|_\infty^{q+1}}{\varepsilon^\beta |x|}+\|U\|_\infty^{q}\|\mathbf1_{\{B(0,|x|/2)^c\}}U\|_{2d/(\alpha-1)}\int_\varepsilon^1\frac{{r^{(\alpha-1)/2-\beta}}}{(1-r)^{1-(\alpha-1)/2\alpha}}  \,dr,
\end{align*}
which tends to zero as $|x|\rightarrow\infty$ by virtue of Lemma \ref{prop:Lpest} applied with $\gamma=2d/(\alpha-1)>d/\beta$. Thus, the integral in \eqref{eq:DuhamelEst} is arbitrarly small for large $|x|$, which ends the proof.
\end{proof}

We are now ready to derive the upper bound of $U$.

\begin{prop}\label{thm:Uest}
There is a constant $C=C(d,\alpha,b,M,\beta)$ such that
\begin{align*}
 U(x) &\le C \frac{1}{(1+|x|)^\beta},\ \ \ \ \ x\in \RR^d.
 \end{align*}
\end{prop}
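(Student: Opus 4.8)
The plan is to bootstrap from the weak bound \eqref{eq:u1est1} (which gives $U(x)\lesssim (1+|\tilde x|)^{1-\beta}$ and in particular $U\in L^\infty$) together with the decay at infinity from Lemma \ref{lem:U->0}, and feed these into the Duhamel formula \eqref{eq:duh1} to obtain the full decay rate $(1+|x|)^{-\beta}$. The starting point is the pointwise inequality \eqref{eq:DuhamelEst}:
\begin{align*}
U(x) \lesssim \frac{M}{(1+|x|)^\beta} + \int_0^1\int_{\RR^d} r^{d-\beta}\,\frac{p(1-r^\alpha,x,rw)}{(1-r^\alpha)^{1/\alpha}+|x-rw|}\, U(w)^{q+1}\,dw\,dr,
\end{align*}
so everything reduces to showing the integral term is $\lesssim (1+|x|)^{-\beta}$. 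For $|x|\le 2$ this is immediate since $U\in L^\infty$ and the kernel is integrable; so assume $|x|$ large.

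First I would handle the $r$-singularity near $r=1$: split the $r$-integral at, say, $r=1/2$. On $r\in(0,1/2)$ the factor $(1-r^\alpha)^{1/\alpha}+|x-rw|\gtrsim 1$, and one can bound $U(w)^{q+1}\le \|U\|_\infty^q U(w)$ and apply Lemma \ref{lem:aux1}/\eqref{aux1} (with $t=1$, $u^*=U$) together with the input estimate \eqref{eq:Uest}; since $1-r\approx 1$ on this range, this contributes $\lesssim \|U\|_\infty^q\int_0^{1/2} r^{d-\beta}\cdot(\text{bounded})\,dr\cdot(1+|\tilde x|)^{1-\beta}$, which is actually too weak — it only gives $(1+|\tilde x|)^{1-\beta}$, not the desired $x_1$-dependence. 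So instead, on this piece I would keep the extra denominator $|x-rw|$ and split the $w$-integral into $|w|\le |x|/(2r)$ (where $|x-rw|\ge|x|/2$, giving the gain $|x|^{-1}$) and its complement (where $p(1-r^\alpha,x,rw)$ is small because $|x-rw|$ is large, and $U$ is small by Lemma \ref{lem:U->0}); combined with the already-known $(1+|\tilde x|)^{1-\beta}$ rate this should close to $(1+|x|)^{-\beta}$ after iteration. The piece $r\in(1/2,1)$ is where the $(1-r^\alpha)^{-1/\alpha}$ blows up: here I would again split $w$ into $|w|\le|x|/2$ and $|w|>|x|/2$; on the first part use $|x-rw|\approx |x|$ to extract $|x|^{-1}$ and the fact that $\int_{1/2}^1 (1-r^\alpha)^{-1/\alpha}\,dr<\infty$ (since $1/\alpha<1$) with $\int_{\RR^d} p(1-r^\alpha,x,rw)\,dw=r^{-d}$; on the second part use H\"older with \eqref{eq:Lpp} and the $L^{2d/(\alpha-1)}$ bound from Lemma \ref{prop:Lpest} restricted to $B(0,|x|/2)^c$, which goes to $0$ by Lemma \ref{lem:U->0}, exactly as in the proof of Lemma \ref{lem:U->0} but now tracking the explicit decay rate.

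The cleanest way to organize all this is a \emph{self-improvement} argument: let $\Phi(R)=\sup_{|x|\ge R}(1+|x|)^{\eta} U(x)$, start with $\eta$ slightly below $\beta-1$ (justified by \eqref{eq:u1est1}), and show that the Duhamel inequality forces $\Phi$ finite for $\eta$ replaced by $\min(\beta,\,\eta+\delta)$ for some fixed gain $\delta>0$ depending only on $\alpha$; iterating finitely many times reaches $\eta=\beta$. The key mechanism is that in the nonlinear term $U(w)^{q+1}=U(w)\cdot U(w)^q$ one factor of $U$ contributes decay $(1+|w|)^{-\eta}$ which, after the $p$-convolution and Lemma \ref{lem:estpa}-type estimates, transfers to decay in $x$, while the extra denominator $|x-rw|$ contributes one more power, and $U^q$ stays bounded. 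The main obstacle I anticipate is the bookkeeping near $r=1$ and near $w=x/r$ simultaneously: one must check that splitting the $w$-integral by $|x|$ rather than by $|x-rw|$ does not lose the time-singularity integrability, and that the contribution from the far region $|w|>|x|/2$, handled via Lemma \ref{prop:Lpest} and Lemma \ref{lem:U->0}, genuinely decays like $(1+|x|)^{-\beta}$ and not merely tends to zero — this likely requires a quantitative version of the $L^{2d/(\alpha-1)}$-tail estimate, i.e. $\|\mathbf 1_{B(0,R)^c}U\|_{2d/(\alpha-1)}\lesssim R^{-\beta+(d-1)(\alpha-1)/(2d)}$ or similar, obtained by inserting the current $\Phi$-bound into the $L^p$ computation of Lemma \ref{prop:Lpest}.
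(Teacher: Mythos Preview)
Your bootstrap-in-exponent scheme differs substantially from the paper's argument, and the obstacle you flag at the end is real and not resolved by your sketch. When you split the $w$-integral at $|x|/(2r)$, the near region does gain a power from $|x-rw|\gtrsim|x|$, but on the far region the denominator can be as small as $(1-r^\alpha)^{1/\alpha}$, and under the hypothesis $U(w)\le C_\eta(1+|w|)^{-\eta}$ (using only $U^q\le\|U\|_\infty^q$ as you propose) that piece gives
\begin{align*}
\int_0^1 r^{d-\beta}\int_{|w|>|x|/(2r)}\frac{p(1-r^\alpha,x,rw)}{(1-r^\alpha)^{1/\alpha}}\,(1+|w|)^{-\eta}\,dw\,dr
\;\lesssim\; |x|^{-\eta}\int_0^1 r^{\eta-\beta}(1-r)^{-1/\alpha}\,dr,
\end{align*}
which is $\approx|x|^{-\eta}$ with \emph{no} gain. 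Your proposed quantitative $L^{2d/(\alpha-1)}$ tail estimate does not help: H\"older with $\|p(1-r^\alpha,x,r\cdot)\|_{2d/(2d-\alpha+1)}$ costs $(1-r)^{-(\alpha-1)/(2\alpha)}$, while $\|\mathbf 1_{B(0,|x|/2)^c}U\|_{2d/(\alpha-1)}\lesssim|x|^{-\eta+(\alpha-1)/2}$ \emph{loses} decay. (Your suggested exponent $-\beta+(d-1)(\alpha-1)/(2d)$ already presupposes the target bound, which is circular.) One could try to salvage the iteration by exploiting the full $U^{q+1}\lesssim(1+|w|)^{-\eta(q+1)}$ on the far piece to gain $\eta q$ per step, but then one must simultaneously keep $\eta(q+1)>\beta-1$ for the $r$-integral near $0$; this is considerably more delicate than what you outline, and you have not carried it out.

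The paper avoids all of this with two moves you are missing. First, it works with the approximations $(u^{(n)})^*$ rather than $U$, so that
\begin{align*}
A_n:=\sup_{t>0,\,x\in\RR^d}(1+|x|)^\beta\,(u^{(n)})^*(t,x)
\end{align*}
is \emph{a priori finite} for each $n$ by \eqref{eq:estJDE}; this permits an absorption argument instead of an exponent iteration. Second, it splits the $w$-integral at a \emph{fixed} radius $R$, chosen via Lemma~\ref{lem:U->0} so that $\sup_{|w|\ge R}U(w)^q<\varepsilon$. For $|x|>2R$ and $|w|<R$ one has $|x-rw|\ge|x|/2$; for $|w|>R$ the factor $U(w)^q<\varepsilon$ is small and the remaining $(u^{(n)})^*$ is bounded by $A_n(1+|w|)^{-\beta}$, after which Lemma~\ref{lem:estpa} gives
\begin{align*}
\int_{\RR^d}\frac{p(1-r^\alpha,x,rw)}{(1+|w|)^\beta}\,dw\approx\frac{r^{\beta-d}}{(1+|x|)^\beta},
\end{align*}
leading to $A_n\le c\big(1+\|U\|_\infty(1+2R)^\beta+\varepsilon A_n\big)$ and hence a uniform bound upon taking $\varepsilon=1/(2c)$. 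The key point is that the fixed-$R$ split converts the merely qualitative decay of Lemma~\ref{lem:U->0} into a \emph{small coefficient} in front of $A_n$; your moving split at $|x|/2$ produces a coefficient of order one, which cannot be absorbed.
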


\begin{proof}  Recall that $u^{(n)}$ is a solution to the problem \eqref{eq:problem} with initial condition given by \eqref{eq:approxu0} and $\(u^{(n)}\)^*(t,x)$ is defined by \eqref{def:star}. Note also that $\(u^{(n)}\)^*(t,x) \nearrow u^*(t,x) = U(x)$. From \eqref{eq:estJDE} and Lemma \ref{lem:estpa} we have
\begin{align*}
A_n:=\sup_{\substack{t>0\\x\in\RR^d}}{\(u^{(n)}\)^*(t,x)}(1+|x|)^\beta<\infty.
\end{align*}
It is enough to show that $A_n$'s are uniformly bounded.
Duhamel formula  \eqref{eq:duhrescalled} combined with  \eqref{eq:gradp} give us
\begin{align*}
\(u^{(n)}\)^*(t,x) \lesssim  &\,C_0\left(\frac1{(1+|x|)^\beta}  +I_n(t,x)\right),
\end{align*}
where $C_0=C_0(d,\alpha,b,M)>0$ is some constant and
\begin{align*}
 I_n(t,x)= \int_0^{1} \int_{\RR^d} r^{d-\beta} \frac{p(1-r^\alpha,x,r w)}{(1-r^\alpha)^{1/\alpha}+|x-rw|}\[ \(u^{(n)}\)^*(r^\alpha t,w)\]^{(\alpha-1+\beta)/\beta}\,dw\,dr.
\end{align*}
Let $0<\ve<1/2$. By Lemma \ref{lem:U->0}, we may and do choose  $R>0$  such that $|U(z)|<\varepsilon^{\beta/(\alpha-1)}$ for $|z|\geq R$ and  such that ${\|U\|^{(\alpha-1)/\beta}_\infty}/{|x|}<\varepsilon$ for $|x|>2R$.

Now let $|x|>2R$. Since $|x-rw|\geq |x|/2$ for $|w|<R$ and $0<r<1$,  we get
\begin{align*}
I_n(t,x)&\le \int_0^{1} \int_{B(0,R)}r^{d-\beta}\frac{p(1-r^\alpha,x,rw)}{|x|/2}\(u^{(n)}\)^*(r^\alpha t,w)\|U\|_\infty^{(\alpha-1)/\beta}dw\,dr \\
&\ \ \ +\int_0^{1} \int_{B(0,R)^c}r^{d-\beta}\frac{p(1-r^\alpha,x,rw)}{(1-r^\alpha)^{1/\alpha}}\(u^{(n)}\)^*(r^\alpha t,w)\(\sup_{|z|\geq R}U(z)\)^{(\alpha-1)/\beta}dw\,dr\\
& \le \varepsilon A_n\(2\|U\|_\infty^{(\alpha-1)/\beta}+1\)\int_0^{1} \int_{\RR^d} r^{d-\beta}(1-r^\alpha)^{-1/\alpha} \frac{p(1-r^\alpha,x,r w)}{(1+|w|)^\beta}\,dw\,dr.
\end{align*}
By \eqref{eq:pscaling} and Lemma \ref{lem:estpa},
\begin{align}\label{aux5}
&\int_{\RR^d} \frac{p(1-r^\alpha,x,r w)}{(1+|w|)^\beta}\,dw=\int_{\RR^d} r^{-d} \frac{p(r^{-\alpha}-1,r^{-1}x, w)}{(1+|w|)^\beta}\,dw\approx\frac{ r^{\beta-d}}{(1+|x|)^\beta}.
\end{align}
Hence,
\begin{align}\nonumber
I_n(t,x)& \lesssim \frac{\varepsilon A_n}{(1+|x|)^\beta}\int_0^{1}  (1-r^\alpha)^{-1/\alpha}dr\approx \frac{\varepsilon A_n}{(1+|x|)^{\beta}}.
\end{align}
On the other hand, for $|x|\leq 2R$, 
\begin{align*}
\(u^{(n)}\)^*(t,x)\leq U(x)\leq \|U\|_\infty\leq \frac{\|U\|_\infty(1+2R)^\beta}{(1+|x|)^\beta}.
\end{align*}
Finally, there is a constant $c=c(d,\alpha,b,M,\beta)$ such that for any $t>0$ and $x\in\RR^d$ we have
$$\(u^{(n)}\)^*(t,x)\le \frac{c}{(1+|x|)^\beta}\(1+\varepsilon A_n+\|U\|_\infty(1+2R)^\beta\),$$
and consequently 
$$A_n\le c(1+\varepsilon A_n+\|U\|_\infty(1+2R)^\beta).$$
Taking $\varepsilon=1/(2c)$, we obtain 
$$A_n\le {2(1+\|U\|_\infty(1+2R)^\beta)},$$
as required. 

\end{proof}

Next we provide an asymptotic behaviour of $U$, which is driven by the functions $h_n$ below. First, let us define  the kernel
$$k(x,w)=\alpha \int_0^{1} r^{d-\beta} \nabla_x p(1-r^\alpha,x,r w)dr,$$
and then, for a function $f:\RR^d\rightarrow\RR$, the operator 
\begin{align*}
(K f)(x) &= P_1u_0(x)+\int_{\RR^d}k(x,w)f(w)| f(w)|^{q}\,dw. 
\end{align*}
In view of \eqref{eq:duh1}, we clearly have $KU=U$ . Furthermore, we put  $h_0\equiv0$ and 
\begin{align*} h_n(x) = {K}^nh_0(x),\qquad x\in\RR^d, n\ge1. 
\end{align*} 
In particular,
\begin{align*}
h_1(x) &= P_1u_0(x), \\
h_2(x) &= P_1u_0(x) + \int_{\RR^d} k(x,w) [P_1 u_0(w)]^{q+1}dw.
\end{align*}
One can  show inductively that for any $n\geq0$ there is a constant $c_n>0$ such that
\begin{align}\label{eq:h_nest} |h_n(x)|\leq \frac{c_n}{(1+|x|)^{\beta}},\ \ \ \ \ x\in\RR^d.\end{align}
Indeed, by  \eqref{aux5} we get
\begin{align*}
|h_{n+1}(x)|&=|Kh_n(x)|\\
&\lesssim P_1u_0(x)+ \int_0^{1} \int_{\RR^d} r^{d-\beta} |\nabla_x p(1-r^\alpha,x,r w)|| h_n(w)|^{q+1}\,dw\,dr\\[5pt]
&\lesssim  \frac{1}{(1+|x|)^\beta}+c_n^{q+1}\int_0^1\frac{r^{d-\beta}}{(1-r)^{1/\alpha}}\int_{\RR^d}\frac{p(1-r,x,rw)}{(1+|w|)^{\beta}}dw\, dr\\[5pt]
&\lesssim\frac{c_n^{q+1}+1}{(1+|x|)^\beta} .
\end{align*}
Note that the functions $h_n$ might be considered as Picard approximations of $U$. 
The next result supplements Proposition \ref{thm:Uest} precisely describing behaviour of $U(x)$ for large $|x|$. Although asymptotics of  the first two orders only are mentioned in Theorem \ref{thm:sss}, we provide below more general result, as the arguments are similar.

\begin{prop}\label{thm:asymp}  For every $n\geq0$ we have
$$|U(x)-h_n(x)|\stackrel{n}{\lesssim} {(1+|x|)^{-[\beta+n(\alpha-1)]\wedge[d+\alpha+1]}}.$$
\end{prop}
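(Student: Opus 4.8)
The plan is to prove the estimate by induction on $n$, using the fixed-point identity $KU = U$ together with the already established bounds $U(x) \lesssim (1+|x|)^{-\beta}$ (Proposition \ref{thm:Uest}) and $|h_n(x)| \lesssim (1+|x|)^{-\beta}$ (the bound \eqref{eq:h_nest}). The base case $n=0$ is just Proposition \ref{thm:Uest}, since $h_0 \equiv 0$. For the inductive step, I would write
\begin{align*}
U(x) - h_{n+1}(x) = KU(x) - Kh_n(x) = \int_{\RR^d} k(x,w)\left(\fp{U(w)}{q+1} - \fp{h_n(w)}{q+1}\right) dw,
\end{align*}
and estimate the difference of french powers pointwise: since $|a^{\langle q+1\rangle} - b^{\langle q+1\rangle}| \lesssim (|a|^q + |b|^q)|a-b|$ for $q>0$, and both $|U(w)|, |h_n(w)| \lesssim (1+|w|)^{-\beta}$, we get
\begin{align*}
\left|\fp{U(w)}{q+1} - \fp{h_n(w)}{q+1}\right| \lesssim (1+|w|)^{-\beta q}\,|U(w) - h_n(w)| \stackrel{n}{\lesssim} (1+|w|)^{-\beta q - [\beta+n(\alpha-1)]\wedge[d+\alpha+1]}.
\end{align*}

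The remaining task is a kernel estimate: one must bound $\int_{\RR^d} |k(x,w)| (1+|w|)^{-\gamma}\, dw$ for the relevant exponent $\gamma = \beta q + \theta$, where $\theta = [\beta+n(\alpha-1)]\wedge[d+\alpha+1]$, and show this is controlled by $(1+|x|)^{-(\theta + \alpha - 1)}$ up to the cutoff at $d+\alpha+1$. Here I would use $|k(x,w)| \lesssim \int_0^1 r^{d-\beta}\frac{p(1-r^\alpha,x,rw)}{(1-r^\alpha)^{1/\alpha}+|x-rw|}\,dr$ coming from \eqref{eq:gradp}, split the spatial integral into the regions $|w| \le |x|/2$, $|w| \ge 2|x|$, and $|x|/2 \le |w| \le 2|x|$, and in each region combine the Gaussian-type decay of $p$, the factor $1/(|x-rw|+(1-r^\alpha)^{1/\alpha})$ which produces the extra $(\alpha-1)$ decay after integrating in $r$ near $r=1$, and Lemma \ref{lem:estpa} (or \eqref{eq:estpa} applied after the scaling \eqref{eq:pscaling}) to handle the polynomial tail $(1+|w|)^{-\gamma}$. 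The exponent $\gamma = \beta q + \theta = (\alpha-1) + \theta$ (recalling $q = (\alpha-1)/\beta$), so each iteration gains exactly one factor of $(\alpha-1)$ in the decay rate, which is what produces $\beta + (n+1)(\alpha-1)$; the saturation at $d+\alpha+1$ arises because once $\gamma$ exceeds roughly $d$, the integral $\int (1+|w|)^{-\gamma} |k(x,w)|\, dw$ is dominated by the local part near $w \approx x$ where $k$ behaves like $\nabla_x p$ integrated in $r$, giving a fixed rate that cannot be improved further.

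The main obstacle I anticipate is precisely this kernel integral near $r = 1$ and near $w = x$ simultaneously: the singularity $1/((1-r^\alpha)^{1/\alpha}+|x-rw|)$ is integrable in $w$ only because of the $d$-dimensional volume, and integrable in $r$ only because the exponent $d-\beta$ and the $(1-r^\alpha)^{-1/\alpha}$ weight balance against the decay of $p(1-r^\alpha,x,rw)$; tracking the exact power of $(1+|x|)$ that survives, and verifying the cutoff at $d+\alpha+1$, requires careful bookkeeping with the two-sided bounds \eqref{eq:pest}. A cleaner route for the recursive part of the bound may be to package the needed inequality as: if $|f(w)| \lesssim (1+|w|)^{-\gamma}$ with $\gamma \ge \beta$, then $\left|\int k(x,w) \fp{f(w)}{q+1}\, dw\right| \lesssim (1+|x|)^{-[\gamma + (\alpha-1)]\wedge[d+\alpha+1]}$, prove this once, and then apply it both to the difference $U - h_n$ (to advance the induction) and implicitly it is already consistent with \eqref{eq:h_nest}. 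I would also need the uniform-in-$n$ worry to be a non-issue here since $n$ is fixed and the implied constants are allowed to depend on $n$, as indicated by $\stackrel{n}{\lesssim}$.
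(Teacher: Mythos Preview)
Your proposal is correct and follows essentially the same approach as the paper: induction on $n$ with base case from Proposition~\ref{thm:Uest}, the french-power difference inequality $|\fp{a}{q+1}-\fp{b}{q+1}|\lesssim(|a|+|b|)^q|a-b|$ to reduce to a kernel integral against $(1+|w|)^{-\gamma}$, and then a region-splitting estimate of that integral using \eqref{eq:gradp} and \eqref{eq:pest}. The only cosmetic differences are that the paper splits into two $r$-dependent regions $\{|w|<|x|/(2r)\}$ and its complement rather than your three fixed annuli, and it separates out the subcritical case $\beta+(n+1)(\alpha-1)<d$ explicitly (handling it in one line via \eqref{aux5}) before doing any splitting.
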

\begin{proof}
We will use induction. For $n=0$ the assertion follows from Theorem \ref{thm:Uest}. Consider some $n\geq0$. Since $U$ and every $h_n$ are bounded, we may focus only on large $|x|$.

Recall that for any $a \in \RR$ and $\gamma>1$ we denote $\fp{a}{\gamma} = a |a|^{\gamma-1}$. Note that $\frac{d}{dx} \fp{x}{\gamma} = \gamma|x|^{\gamma-1}$. Hence, for any $a,b \in \RR$ and $\gamma>0$ we have
\begin{align}\nonumber
\left|\fp{b}{\gamma+1} - \fp{a}{\gamma+1}\right| &= \left|\int_a^b \frac{d}{dx}\fp{x}{\gamma+1} dx \right| = (\gamma+1)\left|\int_a^b |x|^\gamma dx\right|\\\label{eq:french}
& \le (\gamma+1)|b-a|(|a|+|b|)^\gamma.
\end{align}
Thus,   by  Theorem \ref{thm:Uest}, \eqref{eq:h_nest} and \eqref{eq:french}
\begin{align*}
&|U(x)-h_{n+1}(x)|\\
&\lesssim\int_{\RR^d} |k(x,w)|\left|U(w)-h_n(w)\right|\(|U(w)|+|h_{{n}}(w)|\)^{q}\,dw\\
&\stackrel{n}{\lesssim} C_n\int_{\RR^d}  |k(x,w)|\frac{1}{(1+|w|)^{[\beta+n(\alpha-1)]\wedge[d+\alpha+1]}} \frac{1}{(1+|w|)^{\beta q}}dw\,dr\\
&\lesssim \int_0^{1} r^{d-\beta}\int_{\RR^d}  \frac{p(1-r^\alpha,x,r w)}{(1-r^\alpha)^{1/\alpha}+|x-rw|} \; \frac{1}{(1+|w|)^{[\beta+(n+1)(\alpha-1)]\wedge[d+2\alpha}]} dw\,dr.
\end{align*} 
Let $a_n= \beta+(n+1)(\alpha-1)$. In the case ${a_n}<d$, we bound $|x-rw|$ by zero and take advantage of \eqref{aux5}. Assume therefore ${a_n}\geq d$. Since  $|x-rw| \ge |x|/2$ for $w \in B(0,|x|/2r)$ and $0<r<1$, we split the inner integral above and estimate (applying \eqref{eq:pest}) it as follows 
\begin{align*}
\int_{\RR^d} (\ldots) dw
&\stackrel{n}{\lesssim} \int_{B(0,|x|/2r)} \frac{1-r^\alpha}{((1-r^\alpha)^{1/\alpha}+|x|)^{d+\alpha+1}(1+|w|)^{{a_n\wedge[d+2\alpha]}}} dw\\
&\ \ \ \ +\int_{B(0,|x|/2r)^c} \frac{p(1-r^\alpha,x,r w)}{(1-r^\alpha)^{1/\alpha}(1+|x|/r)^{{a_n\wedge[d+2\alpha]}}} dw\\
&\stackrel{n}{\lesssim} \frac{1}{|x|^{d+\alpha+1}}\int_{B(0,|x|/2{r})}  \frac{dw}{(1+|w|)^{{a_n \wedge[d+2\alpha]}}}+ \frac{r^{-d+\({a_n}\wedge[d+2\alpha]\)}}{(1-r^\alpha)^{1/\alpha}|x|^{{a_n\wedge[d+2\alpha]}}} \\
&\stackrel{n}{\lesssim} \frac{1+\mathbf1_{{a_n}=d}\ln|x/2r|}{|x|^{d+\alpha+1}}+ \frac{{1}}{(1-r^\alpha)^{1/\alpha}|x|^{{a_n\wedge[d+\alpha+1]}}} \\
&{\lesssim}  \frac{1+ {\ln(1/r)}}{(1-r^\alpha)^{1/\alpha}|x|^{{a_n}\wedge[d+\alpha+1]}}.
\end{align*}
Thus, we arrive at 
\begin{align*}
|U(x)-h_{n+1}(x)|&\stackrel{n}{\lesssim}  \frac1{|x|^{[\beta+(n+1)(\alpha-1)]\wedge[d+\alpha+1]}}\int_0^{1} \frac{r^{d-\beta}{(1+\ln(1/r))}}{(1-r^\alpha)^{1/\alpha}}dr\\
&\stackrel{n}{\approx}  \frac1{|x|^{[\beta+(n+1)(\alpha-1)]\wedge[d+\alpha+1]}},
\end{align*}
which ends the proof.
\end{proof}
Proposition \ref{thm:asymp} directly imples the lower bound for $U(x)$.

\begin{cor}\label{cor:Ulowest}
We have $U(x) \gtrsim \dfrac{1}{(1+|x|)^\beta}$ for $x\in \RR^d$. 
\end{cor}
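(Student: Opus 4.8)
The plan is to derive the lower bound directly from Proposition \ref{thm:asymp} applied with $n=1$, which gives
$$|U(x)-h_1(x)| \stackrel{}{\lesssim} \frac{1}{(1+|x|)^{\beta+(\alpha-1)}}, \qquad x\in\RR^d,$$
since $\alpha\in(1,2)$ ensures $\beta+(\alpha-1) < \beta + 1 \le d+\alpha+1$, so the minimum in the exponent is $\beta+(\alpha-1)$. Here $h_1(x) = P_1u_0(x) = M\int_{\RR^d} p(1,x,y)|y|^{-\beta}dy$. By Lemma \ref{lem:estpa} (applied with $\gamma=\beta<d$, $t=1$, $r=0$) we have the two-sided estimate $h_1(x) \approx (1+|x|)^{-\beta}$; in particular there is a constant $c_0>0$ with $h_1(x)\ge c_0(1+|x|)^{-\beta}$.

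The key point is then to compare the two powers: since $\alpha-1>0$, the difference bound decays strictly faster than $h_1$, so for large $|x|$ the term $h_1(x)$ dominates. Concretely, writing $U(x) \ge h_1(x) - |U(x)-h_1(x)| \ge c_0(1+|x|)^{-\beta} - C(1+|x|)^{-\beta-(\alpha-1)}$, we can choose $R>0$ so large that $C(1+|x|)^{-(\alpha-1)} \le c_0/2$ for $|x|\ge R$, which yields $U(x) \ge \tfrac{c_0}{2}(1+|x|)^{-\beta}$ on that region. For the complementary compact region $|x|\le R$, we invoke Corollary \ref{cor:Ulowest}'s ingredients differently: $U$ is a nonnegative solution (being the increasing limit of the nonnegative $u^{(n)}$), and in fact $U(x) = u^*(1,x) \ge (u^{(1)})^*(1,x)$, where $u^{(1)}$ is the solution with initial data $u_0^{(1)} \in L^1\cap L^\infty$, nonnegative and not identically zero; by the estimate \eqref{eq:estJDE} together with positivity of $P_1 u_0^{(1)}$ everywhere, we get $(u^{(1)})^*(1,x) \ge c_1 P_1 u_0^{(1)}(1^{1/\alpha}x) > 0$ for all $x$, and this quantity is bounded below by a positive constant on the compact set $\{|x|\le R\}$. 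Combining the two regions gives the claimed bound with a new constant.

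The main obstacle, and the only place requiring care, is the lower bound on the compact set $|x|\le R$: one must confirm that $U$ does not vanish there. This follows because $U = \lim_n u^{(n)}(1,\cdot) \ge u^{(1)}(1,\cdot)$ and $u^{(1)}$ solves \eqref{eq:problem} with a bounded, compactly supported, nonnegative, nontrivial initial condition, so by \eqref{eq:estJDE} it satisfies $u^{(1)}(1,x) \ge C^{-1}P_1 u_0^{(1)}(x)$, and $P_1 u_0^{(1)}(x) = \int p(1,x,y)u_0^{(1)}(y)\,dy > 0$ everywhere since $p(1,x,y)>0$ for all $x,y$. Being continuous and strictly positive on the compact ball $\overline{B(0,R)}$, it attains a positive minimum there, which absorbs into the constant. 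Alternatively, and more cheaply, one may simply note that $\inf_{|x|\le R} h_1(x) > 0$ by Lemma \ref{lem:estpa} and that $U \ge 0$, so $U(x) \ge h_1(x) - |U(x)-h_1(x)|$; even if this difference is not small on the compact set, a direct application of $U\ge0$ combined with the global bound $|U(x)-h_1(x)|\le C(1+|x|)^{-\beta-(\alpha-1)}$ still suffices once $R$ is enlarged, because on any bounded set $(1+|x|)^{-\beta}$ is itself bounded below, so it is enough to have $U$ bounded below by \emph{any} positive constant there, which the strict positivity argument supplies. In either formulation the proof reduces to the elementary observation that a faster-decaying perturbation cannot destroy a power-law lower bound.
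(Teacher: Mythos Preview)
Your proof is correct and follows essentially the same approach as the paper: both use Proposition \ref{thm:asymp} with $n=1$ together with $h_1=P_1u_0\approx (1+|x|)^{-\beta}$ from Lemma \ref{lem:estpa} for large $|x|$, and both invoke $U\ge u^{(1)}(1,\cdot)$ together with \eqref{eq:estJDE} to obtain a positive lower bound on the remaining compact set. The only cosmetic difference is that the paper estimates $P_1u_0^{(1)}$ explicitly via \eqref{eq:pest}, while you appeal to positivity and continuity on a compact set; your closing ``alternative'' paragraph is somewhat circular (it ends up relying on the same strict positivity argument) and could be dropped.
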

\begin{proof}
  Proposition \ref{thm:asymp} with $n=1$ and  Lemma \ref{lem:estpa} applied to Proposition \ref{thm:Uest} imply  the lower bound for large $|x|$. For any fixed $R>0$ and $|x|<R$ we employ \eqref{eq:estJDE} and bound
\begin{align*}
U(x)&\geq u^{(1)}(1,x)\gtrsim (M\wedge 1)(P_1\mathbf1_{B(0,1)})(x)\approx \int_{|w|<1}\frac{dw}{(1+|w-x|)^{d+\alpha}}\\
&\gtrsim \frac1{(2+R)^{d+\alpha}}\geq\frac{1}{(2+R)^{d+\alpha}}\frac 1{(1+|x|)^{\beta}},
\end{align*}
which proves the assertion.
\end{proof}
\subsection{Gradient of $U$}
Eventually, we turn our attention to  the gradient of $U$.  Note that the  estimates in \eqref{eq:nablaU}, that we are going to prove, coincides with the ones of $U$ from Proposition \ref{thm:Uest}. Nevertheless, it is not true for the whole range of arguments of the solution $u(t,x)$, as the scaling property of the gradient $\nabla u(t,x)$ is slightly  different, i.e. we have
\begin{align*}
(\nabla_x u)(t,x)=\nabla_x \(\lambda^{\beta}u(\lambda^\alpha t,\lambda x)\)= \lambda^{\beta+1}(\nabla_x u)(\lambda^\alpha t,\lambda x),
\end{align*}
 and consequently, for $\lambda=t^{-1/\alpha}$, we have (cf. \eqref{eq:u=U})
 \begin{align*}
 (\nabla_x u)(t,x)=t^{-(\beta+1)/\alpha}(\nabla_x u)(1,x/t^{1/\alpha})=t^{-(\beta+1)/\alpha} (\nabla U)(x/t^{1/\alpha}).
\end{align*}

\begin{prop}\label{prop:gradUest}
We have $ U\in \mathcal C^1\(\RR^d\)$ and 
\begin{align}\label{eq:nablaU}|\nabla U(x)|\lesssim \frac1{(1+|x|)^\beta}.\end{align}
\end{prop}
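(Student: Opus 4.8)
The plan is to differentiate the Duhamel formula \eqref{eq:duh1} under the integral sign and estimate the resulting kernel. Writing $U = KU$, we have
\begin{align*}
U(x) = M\int_{\RR^d}\frac{p(1,x,y)}{|y|^\beta}\,dy + \alpha\int_0^1\int_{\RR^d} r^{d-\beta}\nabla_x p(1-r^\alpha,x,rw)\,\fp{U(w)}{q+1}\,dw\,dr.
\end{align*}
The first term is smooth with gradient bounded by $M\int |\nabla_x p(1,x,y)|\,|y|^{-\beta}\,dy \lesssim (1+|x|)^{-\beta}$ via \eqref{eq:gradp} and Lemma \ref{lem:estpa} (after integrating out the singularity of $|y|^{-\beta}$ as in \eqref{eq:inty^beta}). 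For the nonlinear term, I would first justify that it defines a $\mathcal C^1$ function by dominated convergence: one needs $\int_0^1\int_{\RR^d} r^{d-\beta}|\nabla^2_x p(1-r^\alpha,x,rw)|\,|U(w)|^{q+1}\,dw\,dr < \infty$ locally uniformly in $x$, which follows from \eqref{eq:gradp} (with $|\bk|=2$), Proposition \ref{thm:Uest}, the scaling \eqref{eq:pscaling}, and the computation \eqref{aux5}; the singularities $(1-r^\alpha)^{-2/\alpha}$ near $r=1$ and $r^{d-\beta}$ near $r=0$ are integrable since $\alpha<2$ and $\beta<d$. This legitimizes
\begin{align*}
\nabla U(x) = M\nabla_x\!\int_{\RR^d}\frac{p(1,x,y)}{|y|^\beta}\,dy + \alpha\int_0^1\int_{\RR^d} r^{d-\beta}\nabla^2_x p(1-r^\alpha,x,rw)\,\fp{U(w)}{q+1}\,dw\,dr.
\end{align*}

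Next I would bound the nonlinear term pointwise. Using \eqref{eq:gradp} with $|\bk|=2$ we have $|\nabla^2_x p(1-r^\alpha,x,rw)| \lesssim p(1-r^\alpha,x,rw)\big((1-r^\alpha)^{1/\alpha}+|x-rw|\big)^{-2}$, and by Proposition \ref{thm:Uest}, $|\fp{U(w)}{q+1}| \lesssim (1+|w|)^{-\beta(q+1)} \le (1+|w|)^{-\beta}$. So it suffices to estimate
\begin{align*}
\int_0^1 r^{d-\beta}\int_{\RR^d} \frac{p(1-r^\alpha,x,rw)}{\big((1-r^\alpha)^{1/\alpha}+|x-rw|\big)^2}\,\frac{dw}{(1+|w|)^\beta}\,dr.
\end{align*}
I would split the $w$-integral at $|w|=|x|/2r$: on $B(0,|x|/2r)$ one has $|x-rw|\ge|x|/2$ so the kernel denominator is $\gtrsim |x|^2$ and the remaining integral $\int p(1-r^\alpha,x,rw)(1+|w|)^{-\beta}\,dw \approx r^{\beta-d}(1+|x|)^{-\beta}$ by \eqref{aux5} — but this alone gives only $(1+|x|)^{-\beta-2}$, which is stronger than needed, while on the complement one bounds the denominator by $(1-r^\alpha)^{2/\alpha}$ and uses $(1+|w|)^{-\beta}\lesssim (1+|x|/2r)^{-\beta}\approx r^\beta(1+|x|)^{-\beta}$ together with $\int p(1-r^\alpha,x,rw)\,dw = r^{-d}\cdot r^d = 1$ after rescaling. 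Either way the $w$-integral is $\lesssim (1+|x|)^{-\beta}\big((1-r^\alpha)^{-2/\alpha}r^\beta + r^{\beta-d}\big)$ up to the extra decay, and multiplying by $r^{d-\beta}$ and integrating in $r$ gives a finite constant since $\int_0^1 r^{d-\beta}\big(r^{\beta-d} + (1-r^\alpha)^{-2/\alpha}r^\beta\big)\,dr<\infty$; hence the nonlinear term is $\lesssim (1+|x|)^{-\beta}$, completing \eqref{eq:nablaU}.

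The main obstacle is the near-diagonal singularity $r\to1$ in combination with the second-derivative blow-up: the factor $(1-r^\alpha)^{-2/\alpha}$ is \emph{not} integrable, so one cannot simply pull $|\nabla^2_x p|$ out. The resolution is exactly the splitting above — away from the diagonal the kernel $|\nabla^2_x p(1-r^\alpha,x,rw)|$ contributes the extra smallness $|x|^{-2}$ instead of $(1-r^\alpha)^{-2/\alpha}$, and near the diagonal one sacrifices two powers of $(1-r^\alpha)^{-1/\alpha}$ against the Gaussian-type decay of $p$ itself, which after the change of variables $w\mapsto x/r + v$ and \eqref{eq:pscaling} still leaves an integrable power. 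One must be slightly careful to track that the split point $|x|/2r$ interacts correctly with the $r^{d-\beta}$ weight for small $r$, but since $\beta<d$ this causes no divergence. For the $\mathcal C^1$-regularity claim, the same estimates with a fixed compact $x$-neighbourhood and the continuity of $\nabla^2_x p$ away from its singularity furnish the dominating function needed for differentiation under the integral.
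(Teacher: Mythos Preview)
Your argument has a genuine gap at the $r\to 1$ singularity. You first claim that $(1-r^\alpha)^{-2/\alpha}$ is integrable near $r=1$ ``since $\alpha<2$'', but for $\alpha\in(1,2)$ one has $2/\alpha>1$, so this factor is \emph{not} integrable. You later notice this, but the proposed resolution via the $w$-split does not cure it: on the complement $\{|w|>|x|/2r\}$ you bound the denominator by $(1-r^\alpha)^{2/\alpha}$ and, after integrating $p$ in $w$, are left with exactly the divergent $\int_0^1 r^{d}(1-r^\alpha)^{-2/\alpha}\,dr$. The vague appeal to ``sacrificing two powers against the Gaussian-type decay of $p$'' cannot work either: $p$ has only polynomial tails, and more to the point $\int_{\RR^d}|\nabla^2_x p(1-r^\alpha,x,rw)|\,dw \approx r^{-d}(1-r^\alpha)^{-2/\alpha}$ identically (do the change $v=rw-x$ and rescale by $(1-r^\alpha)^{1/\alpha}$); there is no cancellation to exploit once you take absolute values. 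Hence you cannot differentiate \eqref{eq:duh1} under the integral sign by a pure dominated-convergence argument.

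The missing idea is integration by parts (in $w$) on the near-diagonal piece, transferring one derivative from the kernel to the solution; this replaces $(1-r^\alpha)^{-2/\alpha}$ by $(1-r^\alpha)^{-1/\alpha}$ at the cost of introducing $\nabla U$ on the right-hand side, which is circular for $U$ itself. The paper resolves this by working with the smooth approximants $u^{(n)}$: one knows a~priori that $A_n:=\sup_{x,\,0<t\le1} t^{1/\alpha}(t^{1/\alpha}+|x|)^\beta|\nabla u^{(n)}(t,x)|<\infty$, splits the Duhamel integral at $(1-\varepsilon)t$, bounds the far piece by second derivatives of $p$ (harmless there) and the near piece, after integrating by parts, by $C\varepsilon^{1-1/\alpha}A_n$. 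This yields $A_n\le C(1+\varepsilon^{-2/\alpha}+\varepsilon^{1-1/\alpha}A_n)$, which closes for small $\varepsilon$ to give $\sup_n A_n<\infty$. The uniform bound then gives a Lipschitz estimate for $U$, from which existence of $\nabla U$ and \eqref{eq:nablaU} follow by a second difference-quotient argument on \eqref{eq:duh1}. Your outline is salvageable only if you incorporate this integration-by-parts/bootstrap step; the purely kernel-based splitting you describe is insufficient.
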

\begin{proof}
By \cite[Theorem 3.5]{MR2407204}, since 
$|\nabla u_0^{(n)}(x)|\leq\frac{M\beta}{|x|^{\beta+1}}\mathbf1_{\{1/n<|x|<n\}}$, 
we have 
$|\nabla u^{(n)}(t,x)|\leq c(1+t) (1+|x|)^{-d-\alpha-1}$,
where $c$ may depend on $n$. Hence, for every $n$ we have 
$$A_n:=\sup_{\substack{x\in\RR^d\\0<t\leq1}}\left|t^{1/\alpha}(t^{1/\alpha}+|x|)^{\beta}\nabla u^{(n)}(t,x)\right|<\infty.$$
We will show that 
\begin{align}\label{eq:A<oo}
A:=\sup_{n\in\NN}A_n< \infty.
\end{align} 
From  \eqref{eq:duhamel} {and by applying integration by parts,} for any $\varepsilon\in(0,1/2)$ we get 
\begin{align}\nonumber
\nabla u^{(n)}(t,x) &= \nabla P_tu^{(n)}_0(x) + \int_0^{(1-\varepsilon)t} \int_{\RR^d} \nabla \(b\cdot \nabla p(t-s,x,z) \) [u^{(n)}(s,z)]^{q+1}\,dz\,ds\\\label{aux4}
&\ \ \ + (q+1)\int_{(1-\varepsilon)t}^t \int_{\RR^d} b \cdot \nabla_z p(t-s,x,z)  u^{(n)}(s,z)^{q}\nabla u^{(n)}(s,z)\,dz\,ds.
\end{align}
Let us note that the first integrand is not absolutely integrable on $\big((1-\varepsilon)t,t\big)\times\RR^d$, which explains the above decomposition. 
We estimate the term $\nabla P_tu^{(n)}_0(x)$ as follows
\begin{align*}
|\nabla_x P_tu^{(n)}_0(x)|&=\left|\nabla_x\int_{\RR^d}p(t,x,y)u^{(n)}_0(y)dy\right|=\left|\int_{\RR^d}\nabla_x p(t,x,y)u^{(n)}_0(y)dy\right|\\
&\lesssim t^{-1/\alpha}\int_{\RR^d}p(t,x,y) |y|^{-\beta}dy\lesssim \frac{1}{t^{1/\alpha}\(t^{1/\alpha}+|x|\)^\beta},
\end{align*}
where we used \eqref{eq:gradp} and Lemma \ref{lem:estpa}.
Next, applying this,  \eqref{eq:gradp}, \eqref{eq:u=U},  and  Proposition \ref{thm:Uest}    to \eqref{aux4}, we obtain for $0<t\leq1$ 
\begin{align*}
|\nabla u^{(n)}(t,x)| &\lesssim \frac{1}{t^{1/\alpha}(t^{1/\alpha}+|x|)^{\beta}}\\
&\ \ \  + \int_0^{(1-\varepsilon)t} \frac{1}{(t-s)^{2/\alpha}}\(\frac{\|U\|_\infty}{s^{\beta/\alpha}}\)^{q}P_{t-s}((s^{1/\alpha}+|\cdot|)^{-\beta})(x)ds\\
&\ \ \ + A_n\int_{(1-\varepsilon)t}^t \frac1{(t-s)^{1/\alpha}}\(\frac{\|U\|_\infty}{s^{\beta/\alpha}}\)^{q}s^{-1/\alpha}P_{t-s}((s^{1/\alpha}+|\cdot|)^{-\beta})(x)\,ds.
\end{align*}
By Lemma \ref{lem:estpa}, $P_{t-s}(1/(s^{1/\alpha}+|\cdot|)^{\beta})(x)\approx 1/(t^{1/\alpha}+|x|)^{\beta}$, and therefore 
\begin{align*}
|\nabla u^{(n)}(t,x)| &\lesssim \frac{1}{t^{1/\alpha}(t^{1/\alpha}+|x|)^{\beta}}\\
&\ \ \ \times\[1 + \int_0^{1-\varepsilon} \frac{\|U\|_\infty^{q}}{(1-r)^{2/\alpha}r^{(\alpha-1)/\alpha}}dr+ A_n\int_{1-\varepsilon}^1 \frac{\|U\|_\infty^{q}}{(1-r)^{1/\alpha}r}dr\]\\
&\lesssim\frac{1}{t^{1/\alpha}(t^{1/\alpha}+|x|)^{\beta}}\[1+\|U\|_\infty^{q}\(\ve^{-2/\alpha}+\varepsilon^{1-1/\alpha} A_n\)\].
\end{align*}
Consequently there is a constant $c$ not depending on $n$ and $\ve$ such that
\begin{align*}
A_n\le c\left(1+\|U\|_\infty^{q}\(\ve^{-2/\alpha}+\varepsilon^{1-1/\alpha} A_n\)\right).
\end{align*}
Thus, taking $\varepsilon$ small enough, we get $A_n\le c\(1+ \ve^{-2/\alpha}\|U\|_\infty^{q}\)/\(1-c\|U\|_\infty^{q}\ve^{1-1/\alpha}\)$, which proves \eqref{eq:A<oo}.
 In particular,  this implies
\begin{align}\nonumber
\left|U(x+z)-U(x)\right|&=\lim_{n\nearrow\infty}|u^{(n)}(1,x+z)-u^{(n)}(1,x)|\\
\label{eq:u-u}
&\leq |z|\sup_{0\leq v \leq 1}|\nabla u^{(n)}(1,x+vz)| \lesssim \frac{A|z|}{(1+|x|)^{\beta}}, 
\end{align}
whenever $|z|<1$. Let $e_1, \ldots e_n$ be the standard basis in $\RR^d$.
  Now, rewriting \eqref{eq:duh1} in a similar manner as it was done in \eqref{aux4}, for any $\varepsilon\in(0,1)$, $1\le k\le d$ and $h\in\RR-\{0\}$ we get
\begin{align}\label{eq:U-U}
&\frac{U(x+he_k)-U(x)}{|h|} = M\int_{\RR^d}\frac{p(1,x+he_k-y)-p(1,x-y)}{|h||y|^\beta}dy\\\nonumber
&+\alpha \int_0^{1-\varepsilon} \int_{\RR^d} r^{d-\beta}b\cdot \frac{\nabla_wp(1-r^\alpha,x+he_k,r w)- \nabla_wp(1-r^\alpha,x,r w)}{|h|}[ U(w)]^{q+1}\,dw\,dr\\\nonumber
&+\alpha \int_{1-\varepsilon}^1 \int_{\RR^d} r^{d-\beta} b\cdot \nabla_wp(1-r^\alpha,0,rw)\frac{[ U(w+(x+he_k)/r)]^{q+1}-[ U(w+x/r)]^{q+1}}{|h|}\,dw\,dr.  
\end{align}
By \eqref{eq:gradp}, we may pass with $|h|$ to zero under the first two integrals. Furthermore, \eqref{eq:u-u} together with \eqref{eq:pscaling} and \eqref{eq:estpa} let us bound the third integral for $|h|<1-\varepsilon$  by 
\begin{align*}
&C\|U\|^{q}\int_{1-\varepsilon}^1 \int_{\RR^d}r^{d-\beta} \frac{p(1-r^\alpha,0,rw)}{(1-r)^{1/\alpha}}\frac{A}{(1+|w+x/r|)^\beta}\,dw\,dr\\
&=A C\|U\|^{q}\int_{1-\varepsilon}^1 \int_{\RR^d}  \frac{r^{-\beta}}{(1-r)^{1/\alpha}}P_{r^{-\alpha}-1}\((1+|\cdot|)^{-\beta}\)(xr^{-1})\,dw\,dr\\
&\lesssim \frac A{(1+|x|)^{\beta}}\int_{1-\varepsilon}^1 (1-r)^{-1/\alpha} \,dr\leq \frac{\varepsilon^{1-1/\alpha}}{1-1/\alpha}  A.
\end{align*}
This gives us 
\begin{align*}
\limsup_{|h|\rightarrow0}\frac{U(x+he_k)-U(x)}{|h|} -\liminf_{|h|\rightarrow0}\frac{U(x+he_k)-U(x)}{|h|}\lesssim \varepsilon^{1-1/\alpha}A,
\end{align*}
for any $\varepsilon\in(0,1)$. Therefore the limit exists, and  is bounded by \eqref{eq:nablaU} due to \eqref{eq:u-u}. Furthermore, by \eqref{eq:U-U} for any $\varepsilon\in (0,1)$ we may express the gradient by 
\begin{align*}
\nabla U(x)& = \nabla P_1u_0(x)+\alpha \int_0^{1-\varepsilon} \int_{\RR^d} r^{d-\beta}\nabla \(b\cdot \nabla_wp(1-r^\alpha,x,r w)\) U^{q+1}(w)\,dw\,dr\\
&\ \ \ +\alpha \int_{1-\varepsilon}^1 \int_{\RR^d} r^{d-\beta} b\cdot \nabla_wp(1-r^\alpha,x,rw) U^q(w)\nabla U(w)\,dw\,dr,  
\end{align*}
which ensures its continuity by the estimates of $U$ and $\nabla U$.
\end{proof}
Below we summarize the results of this section.  
\begin{proof}[Proof of Theorem \ref{thm:sss}]
Let $u_0(x) = M|x|^{-\beta}$ with $M>0$ and $\beta \in (1,d)$. A solution $u(t,x)$ to the problem \eqref{eq:problem} exists by Theorem \ref{thm:existence} and Lemma \ref{lem:u_0ass}. By Lemma \ref{lem:selfsimilar} this solution is selfsimilar and there exists the function $U(x)$ such that $u(t,x) = t^{-\beta/\alpha} U(t^{-1/\alpha}x)$. The estimates \eqref{eq:sss2} follows by Proposition \ref{thm:Uest} and Corollary \ref{cor:Ulowest}. Proposition \ref{thm:asymp} yields \eqref{eq:sss3a} and \eqref{eq:sss3b}. Finally, {the regularity of $U$ and} \eqref{eq:sss4} follows by Proposition \ref{prop:gradUest}.
\end{proof}

\subsubsection*{Acknowledgement} We thank Grzegorz Karch for introducing us into the subject and for helpful discussions.

\end{document}